\documentclass[letterpaper,10pt]{article}
\usepackage{fullpage}

\usepackage{graphicx}%
\usepackage{multirow}%
\usepackage{amsmath,amssymb,amsfonts}%
\usepackage{amsthm}%
\usepackage{mathrsfs}%
\usepackage[title]{appendix}%
\usepackage{xcolor}%
\usepackage{textcomp}%
\usepackage{manyfoot}%
\usepackage{booktabs}%
\usepackage{listings}%
\usepackage{caption}
\usepackage{hyperref}
\usepackage[capitalize,nameinlink,noabbrev]{cleveref}
\usepackage{mathtools}
\usepackage{cite}
\usepackage{algorithm}
\usepackage{algpseudocode}
\usepackage{dashrule}
\usepackage{booktabs}

\crefname{ALC@line}{line}{lines}
\Crefname{ALC@line}{Line}{Lines}

\usepackage{mathabx}

\usepackage{tikz}
\usetikzlibrary{arrows.meta,positioning,calc,decorations.pathreplacing}

\tikzset{
  box/.style   ={draw, thick, minimum width=10mm, minimum height=7mm, align=center},
  bigbox/.style={draw, thick, minimum width=18mm, minimum height=7mm, align=center},
  arr/.style   ={-{Stealth[length=2.2mm]}, thick},
}

\usepackage{comment}
\usepackage{siunitx}
\usepackage{relsize}
\usepackage{ifthen}
\usepackage[colorinlistoftodos]{todonotes}

\usepackage[caption=false]{subfig}

\usepackage{graphics} 
\usepackage{rotating}
\usepackage{color}
\usepackage{enumerate}
\usepackage[T1]{fontenc}
\usepackage{psfrag}
\usepackage{epsfig} 
\usepackage{booktabs}
\usepackage{graphicx,url}
\usepackage{multirow}
\usepackage{array}
\usepackage{latexsym}
\usepackage{amsfonts}
\usepackage{amsmath}
\usepackage{amssymb}
\usepackage{mathtools}
\usepackage{xstring}
\usepackage{multirow}
\usepackage{xcolor}
\usepackage{prettyref}
\usepackage{flexisym}
\usepackage{bigdelim}
\usepackage{breqn} 
\usepackage{listings}

\usepackage{enumitem}
\usepackage{xspace}
\usepackage{bm}
\graphicspath{{./figures/}}
\usepackage{tikz}
\usetikzlibrary{matrix,calc}

\usepackage{amsmath,array,arydshln,xparse}
\usepackage{adjustbox} 
\usepackage{nicematrix}

\usepackage{mdwlist}

\makecompactlist{itemize}{stditemize}

\newrefformat{prob}{Problem\,\ref{#1}}
\newrefformat{def}{Definition\,\ref{#1}}
\newrefformat{sec}{Section\,\ref{#1}}
\newrefformat{sub}{Section\,\ref{#1}}
\newrefformat{prop}{Proposition\,\ref{#1}}
\newrefformat{app}{Appendix\,\ref{#1}}
\newrefformat{alg}{Algorithm\,\ref{#1}}
\newrefformat{cor}{Corollary\,\ref{#1}}
\newrefformat{thm}{Theorem\,\ref{#1}}
\newrefformat{lem}{Lemma\,\ref{#1}}
\newrefformat{fig}{Fig.\,\ref{#1}}
\newrefformat{tab}{Table\,\ref{#1}}

\newtheorem{corollary}{Corollary}
\newtheorem{lemma}{Lemma}

\newcommand{\cf}{\emph{cf.}\xspace}

\newcommand{\bdmath}{\begin{dmath}}
\newcommand{\edmath}{\end{dmath}}
\newcommand{\beq}{\begin{equation}}
\newcommand{\eeq}{\end{equation}}
\newcommand{\bdm}{\begin{displaymath}}
\newcommand{\edm}{\end{displaymath}}
\newcommand{\bea}{\begin{eqnarray}}
\newcommand{\eea}{\end{eqnarray}}
\newcommand{\beal}{\beq \begin{array}{ll}}
\newcommand{\eeal}{\end{array} \eeq}
\newcommand{\beas}{\begin{eqnarray*}}
\newcommand{\eeas}{\end{eqnarray*}}
\newcommand{\ba}{\begin{array}}
\newcommand{\ea}{\end{array}}
\newcommand{\bit}{\begin{itemize}}
\newcommand{\eit}{\end{itemize}}
\newcommand{\ben}{\begin{enumerate}}
\newcommand{\een}{\end{enumerate}}

\newcommand{\calA}{{\cal A}}
\newcommand{\calB}{{\cal B}}
\newcommand{\calC}{{\cal C}}

\newcommand{\calE}{{\cal E}}
\newcommand{\calF}{{\cal F}}
\newcommand{\calG}{{\cal G}}
\newcommand{\calH}{{\cal H}}
\newcommand{\calI}{{\cal I}}
\newcommand{\calJ}{{\cal J}}
\newcommand{\calK}{{\cal K}}
\newcommand{\calL}{{\cal L}}
\newcommand{\calM}{{\cal M}}
\newcommand{\calN}{{\cal N}}
\newcommand{\calO}{{\cal O}}
\newcommand{\calP}{{\cal P}}

\newcommand{\calU}{{\cal U}}
\newcommand{\calV}{{\cal V}}
\newcommand{\calW}{{\cal W}}
\newcommand{\calX}{{\cal X}}

\newcommand{\ie}{\emph{i.e.,}\xspace}

\newcommand{\hide}[1]{}

\newcommand{\hiddenText}{{\color{gray} hidden text.}}
\newcommand{\hideWithText}[1]{\hiddenText}

\newcommand{\Natural}[1]{ { {\mathbb N}^{#1} } }

\DeclareMathOperator*{\argmin}{arg\,min}

\newcommand{\norm}[1]{\left\| #1 \right\|}

\newcommand{\tran}{^{\mathsf{T}}}

\newcommand{\diag}[1]{\mathrm{diag}\left(#1\right)}
\newcommand{\trace}[1]{\mathrm{tr}\left(#1\right)}
\newcommand{\rank}[1]{\mathrm{rank}(#1)}

\newcommand{\vect}[1]{\left[\begin{array}{c}  #1  \end{array}\right]}

\newcommand{\Real}[1]{ { {\mathbb R}^{#1} } }
\newcommand{\reals}{\Real{}}

\newcommand{\scenario}[1]{{\smaller \sf#1}\xspace}

\newcommand{\MOSEK}{\scenario{MOSEK}}

\newcommand{\blue}[1]{{\color{blue}#1}}

\newcommand{\linkToPdf}[1]{\href{#1}{\blue{(pdf)}}}
\newcommand{\linkToPpt}[1]{\href{#1}{\blue{(ppt)}}}
\newcommand{\linkToCode}[1]{\href{#1}{\blue{(code)}}}
\newcommand{\linkToWeb}[1]{\href{#1}{\blue{(web)}}}
\newcommand{\linkToVideo}[1]{\href{#1}{\blue{(video)}}}
\newcommand{\linkToMedia}[1]{\href{#1}{\blue{(media)}}}
\newcommand{\award}[1]{\xspace} 

\renewcommand{\norm}[1]{\lVert #1 \rVert}
\newcommand{\inprod}[2]{\left\langle #1, #2 \right\rangle}

\newcommand{\mymid}{\ \middle\vert\ }

\newcommand{\sym}[1]{\mathbb{S}^{#1}}

\newcommand{\bmat}{\left[ \begin{array}}
\newcommand{\emat}{\end{array}\right]}
\newcommand{\psd}[1]{\sym{#1}_{+}}
\newcommand{\pd}[1]{\sym{#1}_{++}}

\newcommand{\abs}[1]{\left|#1\right|}

\newcommand{\bbN}{\mathbb{N}}

\newcommand{\normtwo}[1]{\norm{#1}_2}

\newcommand{\normF}[1]{\norm{#1}_\mathsf{F}}

\DeclareDocumentCommand{\svec}{o}{
  \IfNoValueTF{#1}{\mathrm{svec}}{\svec \left( #1 \right)}
}
\DeclareDocumentCommand{\smat}{o}{
  \IfNoValueTF{#1}{\mathrm{smat}}{\smat \left( #1 \right)}
}
\newcommand{\HomPoly}[1]{H_{#1}}
\newcommand{\NonPolyCone}[1]{P_{#1}}
\newcommand{\SosCone}[1]{\Sigma_{#1}}
\newcommand{\MomConePs}[1]{\calM_{#1}^{\mathsf{ps}}}
\newcommand{\MomConeAtm}[1]{\calM_{#1}^{\mathsf{atm}}}
\newcommand{\Hankel}[1]{\calH_{#1}}
\newcommand{\MultiIndex}[2]{\bbN_{#2}^{#1}}
\newcommand{\MultiIndexLeq}[2]{\bbN_{\leq #2}^{#1}}
\newcommand{\stimes}{\vee}
\newcommand{\SymGroup}[1]{\mathfrak{S}_{#1}}
\newcommand{\TenProd}[2]{{#1}^{\otimes #2}}
\newcommand{\SymTenProd}[2]{\mathcal{S}^{#2}(#1)}
\newcommand{\SymProj}[2]{\calP_{#1, #2}^{\stimes}}
\newcommand{\cone}{\mathrm{cone}}
\newcommand{\linspan}{\mathrm{span}}
\newcommand{\aff}{\mathrm{aff}}
\newcommand{\conv}{\mathrm{conv}}
\newcommand{\SymRankOne}{\calX_{\mathrm{sep}}^{\stimes}}
\newcommand{\clE}{\mathrm{cl}_{\mathrm{E}}}
\DeclareDocumentCommand{\clZ}{o}{
  \IfNoValueTF{#1}{\mathrm{cl}_{\mathrm{Z}}}{\mathrm{cl}_{\mathrm{Z}}^{#1}}
}
\newcommand{\ri}{\mathrm{ri}}
\newcommand{\minface}{\mathrm{minface}}

\newcommand{\titlemath}[1]{\texorpdfstring{#1}{M}}
\renewcommand{\dim}{\mathrm{dim}}
\newcommand{\codim}{\mathrm{codim}}
\newcommand{\tz}{\tilde{z}}
\newcommand{\tX}{\widetilde{X}}
\newcommand{\tM}{\widetilde{M}}
\newcommand{\tcalA}{\widetilde{\calA}}
\newcommand{\tcalF}{\widetilde{\calF}}

\newcommand{\im}{\mathrm{im}}
\newcommand{\ray}{\reals_+}
\newcommand{\tr}{\mathrm{tr}}

\newcommand{\epsbreak}{\varepsilon_{\mathsf{break}}}
\newcommand{\epsrank}{\varepsilon_{\mathsf{rank}}}
\newcommand{\epscol}{\varepsilon_{\mathsf{col}}}
\newcommand{\epsalt}{\varepsilon_{\mathsf{alt}}}
\newcommand{\Talt}{T_{\mathsf{alt}}}

\newcommand{\Tipm}{T_{\mathsf{ipm}}}

\renewcommand{\vect}{\mathrm{vec}}

\hypersetup{colorlinks, hypertexnames=false, pageanchor=true,
            linkcolor=blue, citecolor={green!50!black}, urlcolor=cyan,
            pdftitle={Field Analysis in ADMM for SDP}
            }
\makeatletter
\AddToHook{cmd/appendix/before}{\def\cref@section@alias{appendix}}
\makeatother
\mathtoolsset{centercolon}
\crefname{assumption}{Assumption}{Assumptions}

\theoremstyle{thmstyleone}%
\newtheorem{theorem}{Theorem}%

\theoremstyle{thmstyletwo}%

\theoremstyle{thmstylethree}%

\title{
    Simplicial Regularizability of the Pseudo-Moment Cone and Carath\'eodory-Type Atomic Decomposition of Moment Matrices
}
\author{Shucheng Kang%
\thanks{School of Engineering and Applied Sciences, Harvard University. Email: \texttt{skang1@g.harvard.edu}}
\and Heng Yang%
\thanks{School of Engineering and Applied Sciences, Harvard University. Email: \texttt{hankyang@seas.harvard.edu}}
}

\pdfoutput=1
\begin{document}

\maketitle


\begin{abstract}
    We study the facial geometry of the (homogeneous) pseudo-moment cone \( \Sigma_{n,2d}^* \) and its implications for atomic decomposition of moment matrices. For fixed \( d \ge 2 \), we show that if a moment matrix is formed by \( O(n^d) \) generically chosen weighted atoms, then its minimal face in the matrix realization of the pseudo-moment cone is \emph{simplicial} and generated by the planted rank-one atoms. 
    Based on this geometric result, we develop a Carath\'eodory-type extreme-ray decomposition algorithm for spectrahedral cones and show that, when specialized to the pseudo-moment cone, it yields an efficient atomic decomposition method for generically generated moment matrices in the same regime. A stabilized numerical implementation demonstrates strong recovery performance, and suggests that outside the guaranteed regime the algorithm may serve as a practical sampler of high-rank extreme rays.
\end{abstract}






\section{Introduction}
\label{sec:intro}


The (homogeneous) pseudo-moment cone is a fundamental geometric object in polynomial optimization, moment theory, and convex algebraic geometry. For instance, moment relaxations of polynomial optimization problems may be formulated as linear programs over this cone~\cite{lasserre01siopt-global,parrilo03mp-semidefinite}, while atomic decomposition of a moment matrix may be interpreted as a conic decomposition into rank-one point-evaluation rays~\cite{blekherman12jams-nonnegative-polynomial-sos,henrion25arxiv-positively-not-sos}. Following~\cite{blekherman12jams-nonnegative-polynomial-sos}, we denote this cone by \( \SosCone{n,2d}^* \), where \( n \) is the number of variables and \( 2d \) is the degree.

Despite its central role, our understanding of the facial structure of \( \SosCone{n,2d}^* \) remains limited. A major obstacle is the existence of higher-rank extreme rays---rays that do not correspond to point evaluations~\cite{blekherman15pams-positive-gorenstein-ideals}. Existing results mainly focus on deciding the ranks of such rays and their construction~\cite{blekherman12jams-nonnegative-polynomial-sos,blekherman15pams-positive-gorenstein-ideals,blekherman17jsc-extreme,henrion25arxiv-positively-not-sos}. In particular, there exist extreme rays of \( \SosCone{n,2d}^* \) of rank \( 3d-2 \) for \( d \ge 3 \) and \( n \ge 3 \), and of rank \( 6 \) for \( d = 2 \) and \( n \ge 4 \)~\cite{blekherman15pams-positive-gorenstein-ideals}. The ternary case has also been studied in greater detail~\cite{blekherman17jsc-extreme}. While these works reveal the complexity of the global facial structure of \( \SosCone{n,2d}^* \), it is natural to ask whether simpler local geometric structures exist that can be leveraged for algorithm design.


\paragraph{Contributions.} We give an affirmative answer to the above question. Our contributions are threefold.
\begin{itemize}
    \item \textbf{Simplicial regularizability of the pseudo-moment cone.} Our main contribution is a new facial-geometric characterization of the pseudo-moment cone. For fixed \( d \ge 2 \), we show that if a moment matrix is formed by \( O(n^d) \) generically chosen weighted atoms, then its minimal face in the matrix realization of the pseudo-moment cone is simplicial and generated by the planted rank-one atoms (\cf Theorem~\ref{thm:sim:minface-condition}). We call this property \emph{simplicial regularizability}. Our proof is motivated by a geometric result on computing linear sections of varieties~\cite[Theorem 15]{johnston23focs-computing-linear-sections-varieties}.

    \item \textbf{Carath\'eodory-type atomic decomposition of moment matrices.} Based on this facial-geometric result, we develop a Carath\'eodory-type extreme-ray decomposition procedure for spectrahedral cones; see Algorithm~\ref{alg:intro:extremeray-decomp}. When specialized to \( \MomConePs{n,2d} \), the matrix realization of \( \SosCone{n,2d}^* \), it yields an efficient atomic decomposition method for generically generated moment matrices with up to \( \Theta(n^d) \) atoms. This leads to an asymptotic improvement over methods based on flatness extension, whose recovery upper bound is \( \Theta(n^{d-1}) \)~\cite{henrion05ppc-detecting,klep18siopt-minimizer-extraction-robust}, provided that one does not solve a hierarchy of semidefinite relaxations to generate high-order moments~\cite{nie14fcm-truncated-moment}. Conceptually, our algorithm can be viewed as a practical variant of the procedure underlying the Carath\'eodory-number bound in~\cite[Theorem 4]{ito17laa-bound-caratheodory-number}.

    \item \textbf{Stabilized numerical implementation.} We develop a stabilized numerical implementation of Algorithm~\ref{alg:intro:extremeray-decomp} to improve its practical robustness. Experiments demonstrate strong recovery performance, suggest that the proven recovery regime is conservative, and indicate that outside the guaranteed regime the algorithm may also serve as a practical sampler of high-rank extreme rays of \( \SosCone{n,2d}^* \).
\end{itemize}

\begin{algorithm}
    \caption{Carath\'eodory-Type Extreme Ray Decomposition \label{alg:intro:extremeray-decomp}}
    \begin{algorithmic}[1]
        \Require nonempty spectrahedral cone $\calC \subseteq \sym{N}$, initial point $X \ne 0 \in \calC$
        \Ensure $\{t_k, M_k\}_{k=1}^r$, with $X = \sum_{k=1}^r t_k M_k$, $t_k > 0$ and $\ray M_k$ an extreme ray of $\calC$ for all $1 \le k \le r$
        \Procedure{RayDecomp}{$\calC, X$}
            \State $k := 1$, $X_1 := X$
            \While{$X_k \ne 0$}
                \State $\calF_k := \minface(X_k, \calC)$ \label{alg:intro:line:Fk}
                \State $B_k :=$ random matrix from any absolutely continuous distribution on $\sym{N}$ \label{alg:intro:line:Bk}
                \State Generate $M_k \in \sym{N}$ from the following linear SDP:
                \begin{align}
                    \label{eq:intro:Mk}
                    M_k \in \argmin_{Y \in \calF_k, \trace{Y} = 1} \inprod{B_k}{Y}
                \end{align}
                \State Find 
                \begin{align}
                    \label{eq:intro:tk}
                    t_k := \sup \{ t \ge 0 \mid X_k - t M_k \in \calF_k \}
                \end{align}
                \State $X_{k+1} := X_k - t_k M_k$ 
                \State $k := k+1$
            \EndWhile
            \State \Return $r := k-1$, $\{t_k, M_k\}_{k=1}^r$
        \EndProcedure
    \end{algorithmic}
\end{algorithm}

\paragraph{Outline.} The rest of the paper is organized as follows. \S\ref{sec:related} reviews related work, and \S\ref{sec:pre} introduces the necessary notation and preliminaries. \S\ref{sec:sim} establishes the simplicial regularizability of the pseudo-moment cone. \S\ref{sec:ray} presents Algorithm~\ref{alg:intro:extremeray-decomp} together with its stabilized implementation. Numerical experiments are given in \S\ref{sec:exp}, and \S\ref{sec:conclusion} concludes with a discussion of future directions.


\section{Related Work}
\label{sec:related}

\paragraph{Facial structures of the moment cone and the pseudo-moment cone.} 
Prior work on pseudo-moment cones has focused mainly on extreme rays and related algebraic boundaries: the first nontrivial cases \( (n,2d)=(3,6) \) and \( (4,4) \) were characterized via Cayley--Bacharach relations~\cite{blekherman11cm-algebraic-boundary-soscone,blekherman12jams-nonnegative-polynomial-sos,blekherman12book-semidefinite}; positive Gorenstein ideals later provided a general framework and sharp rank thresholds for non-evaluation extreme rays~\cite{blekherman15pams-positive-gorenstein-ideals}; and, for ternary forms, Hankel spectrahedra and their algebraic boundaries were studied systematically~\cite{blekherman17jsc-extreme}. Exact-arithmetic constructions of pseudo-moment certificates and extreme rays were proposed more recently in~\cite{henrion25arxiv-positively-not-sos}. On the moment-cone side, truncated moment cones were studied through atom sets, determinacy, core varieties, exposed faces, facial dimensions, and Carath\'eodory numbers~\cite{di18jfa-truncated-moment-atoms-core-variety,di18arxiv-moment-cone,dio21ma-truncated-moment-problem-caratheodory-numbers-hilbert-functions}. From an optimization viewpoint,~\cite{papp19siopt-sos-without-sdp} exploits structured matrix representations of \( \SosCone{n,2d}^* \), including Hankel-type structure in suitable bases, for direct SOS optimization, although not its facial geometry per se.

\paragraph{Atomic decomposition of representable moment matrices.}
Because every representable truncated moment sequence admits a finitely atomic representing measure~\cite{dio21ma-truncated-moment-problem-caratheodory-numbers-hilbert-functions}, one is naturally led to the problem of recovering the atoms and weights from its moment matrix. This problem appears in the truncated moment problem~\cite{dio21ma-truncated-moment-problem-caratheodory-numbers-hilbert-functions,nie14fcm-truncated-moment}, in solution extraction for moment--SOS relaxations~\cite{lasserre01siopt-global,parrilo03mp-semidefinite,henrion05ppc-detecting}, and in related topics such as cubature~\cite{fialkow05ieot-moment-multivariable-cubature} and tensor decomposition~\cite{bernardi13jsc-tensor-decomp-moment-matrices}. A standard extraction mechanism is flat extension: once a higher-degree extension has the same rank as the original truncated moment matrix, the atoms and weights can be recovered via multiplication operators or the GNS construction~\cite{henrion05ppc-detecting,klep18siopt-minimizer-extraction-robust}. This gives an a priori certificate for extraction, but with moments only up to degree \(2d\), the usual flatness test limits recovery to at most \( \binom{n+d-1}{d-1}=\Theta(n^{d-1}) \) atoms for fixed \(d\). Alternatively, one may solve higher-order SDP relaxations until flatness occurs~\cite{nie14fcm-truncated-moment}; our method instead decomposes the given degree-\(d\) moment matrix using the facial geometry of the pseudo-moment cone.

\paragraph{Efficient tensor decomposition algorithms.}
Our facial geometry results are also related to recent progress on tensor decomposition in generic regimes. Classical linear-algebraic methods include simultaneous diagonalization and its connection to tensor decomposition~\cite{de06sjmaa-multilinear-simulataneous-matrix-diagonalization,leurgans93simaa-decomposition-three-way-arrays}. Other algebraic and numerical approaches for symmetric tensors include generating-polynomial methods~\cite{nie17fcm-generatingpolynomial-symtensordecomp}. More recent works obtain generic recovery guarantees through planted rank-one matrices in suitable subspaces~\cite{johnston23focs-computing-linear-sections-varieties,dastidar25arxiv-improve-threshold-rank1-matrices}, moment-matrix extension~\cite{shi25arxiv-efficient}, and power-iteration methods~\cite{wang25arxiv-multispace-power-tensordecomp}. Several of these methods can also recover the atoms and weights of
\( X=\sum_i w_i^2 m_d(z_i)m_d(z_i)\tran \)
in generic regimes overlapping with ours, by viewing \(X\) as a structured symmetric tensor. Their mechanisms, however, are different: linear-section methods lift the problem to a polynomial space and use simultaneous diagonalization~\cite{johnston23focs-computing-linear-sections-varieties}, moment-matrix-extension methods solve for missing moments and can reduce to linear equations in favorable low-regularity regimes~\cite{shi25arxiv-efficient}, and power-method approaches recover components through singular-vector-type iterations~\cite{wang25arxiv-multispace-power-tensordecomp}. By contrast, our method uses positivity and convexity to expose extreme rays of the minimal face of \( \MomConePs{n,2d} \). This convex-geometric viewpoint yields a Carath\'eodory-type atomic decomposition and also provides a way to generate high-rank extreme rays of \( \SosCone{n,2d}^* \), which is outside the usual target of tensor-decomposition algorithms.

\section{Notation and Preliminaries}
\label{sec:pre}

In this section, we review the notation and mathematical preliminaries used throughout the paper, including background on (pseudo-)moment cones, symmetric tensors, and algebraic geometry.

We begin with some general notation. For any set $S$, let $\abs{S}$ denote its cardinality, and let $S^{\times k}$ denote the $k$-fold Cartesian product of $S$. For any positive integer $n$, let $[n] := \{1, \dots, n\}$. Throughout the paper, we endow $\Natural{n}$ with the graded lexicographic order. For a positive integer $d$, let $\MultiIndex{n}{d}$ (resp. $\MultiIndexLeq{n}{d}$) denote the set of multi-indices $\alpha = (\alpha_1, \dots, \alpha_n) \in \Natural{n}$ such that $\abs{\alpha} = d$ (resp. $\abs{\alpha} \le d$). Given $u \in \Real{n}$ and $\alpha \in \Natural{n}$, we write $u^\alpha$ for $u_1^{\alpha_1} \cdots u_n^{\alpha_n}$ and $\binom{d}{\alpha}$ for $\frac{d!}{\alpha_1! \cdots \alpha_n!}$.

Let $\calV$ be a finite-dimensional vector space. For $\calA \subseteq \calV$, let $\cone(\calA)$, $\linspan(\calA)$, $\aff(\calA)$, and $\conv(\calA)$ denote the conic hull, linear span, affine hull, and convex hull of $\calA$, respectively. For a convex set $C \subseteq \calV$, let $\ri(C)$ denote its relative interior. For any $x \in C$, let $\minface(x,C)$ denote the minimal face of $C$ containing $x$. Given $x \in \calV$, let $\ray x := \{tx \mid t \ge 0\}$ denote the ray spanned by $x$.

Given a positive integer $N$, let $\sym{N}$ denote the space of $N \times N$ real symmetric matrices, let $\psd{N}$ denote the cone of positive semidefinite (PSD) matrices in $\sym{N}$, and let $\pd{N}$ denote the cone of positive definite (PD) matrices in $\sym{N}$. Throughout the paper, we use MATLAB-style indexing for matrices: for index sets $I,J$, $A(I,J)$ denotes the submatrix of $A$ with rows indexed by $I$ and columns indexed by $J$; in particular, $A(:,J)$ and $A(I,:)$ denote the corresponding column and row sub-matrices. Given $A, B \in \sym{N}$, let $\trace{A}$ denote the trace of $A$, let $\inprod{A}{B} := \trace{AB}$ denote the trace inner product, and let $A \circ B := (A_{i,j} B_{i,j})_{i,j \in [N]}$ denote the Hadamard product of $A$ and $B$. Let $\lambda_{\max}(A)$ (resp. $\lambda_{\min}(A)$) be the maximum (resp. minimum) eigenvalue of $A$. Let $\normF{A}$ denote the Frobenius norm of $A$. For $v \in \Real{n}$, let $\normtwo{v}$ denote the Euclidean norm of $v$. For a map $f$, let $\im(f)$ denote its image. Let $I_r$ be the identity matrix of size $r$ by $r$.


\paragraph{Background on moment and pseudo-moment cones.}
Following the notation in~\cite{blekherman12jams-nonnegative-polynomial-sos}, let $\HomPoly{n,d}$ denote the space of real homogeneous polynomials of degree $d$ in $n$ variables. We write $\NonPolyCone{n,2d} := \{ p \in \HomPoly{n,2d} \mid p(x) \ge 0, \forall x \in \Real{n} \}$ and $\SosCone{n,2d} := \{ p \in \HomPoly{n,2d} \mid p = \sum_i q_i^2,\ q_i \in \HomPoly{n,d} \}$ for the cones of nonnegative forms and sums of squares (SOS), and $\NonPolyCone{n,2d}^* := \{ \ell \in \HomPoly{n,2d}^* \mid \ell(p) \ge 0, \forall p \in \NonPolyCone{n,2d} \}$ and $\SosCone{n,2d}^* := \{ \ell \in \HomPoly{n,2d}^* \mid \ell(q^2) \ge 0, \forall q \in \HomPoly{n,d} \}$ for their dual cones; we call $\SosCone{n,2d}^*$ the (homogeneous) pseudo-moment cone~\cite{henrion25arxiv-positively-not-sos}. For any $z \in \Real{n}$, let $v_d(z) := (z^\alpha)_{\alpha \in \MultiIndex{n}{d}} \in \Real{\binom{n+d-1}{d}}$ be the vector of all monomials of degree $d$, and let $m_d(z) := (z^\alpha)_{\alpha \in \MultiIndexLeq{n}{d}} \in \Real{\binom{n+d}{d}}$ be the vector of all monomials of degree at most $d$; if $z \in \Real{n}$ and $\tilde{z} := [1; z] \in \Real{n+1}$, then $m_d(z) = v_d(\tilde{z})$. For $\ell \in \HomPoly{n,2d}^*$, let $M(\ell) \in \sym{\binom{n+d-1}{d}}$ denote the matrix indexed by monomials of degree $d$, with entries $M(\ell)_{\alpha,\beta} = \ell(x^{\alpha+\beta})$ for $\alpha,\beta \in \MultiIndex{n}{d}$. Under the degree-$d$ monomial basis identification, the matrix realizations of $\NonPolyCone{n,2d}^*$ and $\SosCone{n,2d}^*$ are
\begin{subequations}\label{eq:pre:moment-cones}
\begin{align}
    \MomConeAtm{n,2d}
    &:= \{ M(\ell) \mid \ell \in \NonPolyCone{n,2d}^* \}
    = \cone\{ v_d(z) v_d(z)\tran \mid z \in \Real{n} \}, \label{eq:pre:moment-cone-atm} \\
    \MomConePs{n,2d}
    &:= \{ M(\ell) \mid \ell \in \SosCone{n,2d}^* \}
    = \Hankel{n,d} \cap \psd{\binom{n+d-1}{d}}, \label{eq:pre:moment-cone-ps}
\end{align}
\end{subequations}
where
\begin{align}
    \label{eq:pre:hankel}
    \Hankel{n,d} := \{ X \in \sym{\binom{n+d-1}{d}} \mid X_{\alpha,\beta} = X_{\alpha^\prime,\beta^\prime},\ \forall \alpha,\beta,\alpha^\prime,\beta^\prime \in \MultiIndex{n}{d} \text{ with } \alpha+\beta = \alpha^\prime+\beta^\prime \} \subseteq \sym{\binom{n+d-1}{d}}
\end{align}
is the subspace of generalized multivariate Hankel matrices. We refer to $\MomConeAtm{n,2d}$ as the cone of moment matrices with representing measures, and $\MomConePs{n,2d}$ as the cone of pseudo-moment matrices. We have $\MomConeAtm{n,2d} \subseteq \MomConePs{n,2d}$ by~\cite{blekherman12jams-nonnegative-polynomial-sos}. Finally, for any $z \in \Real{n}$, let $\ell_z \in \HomPoly{n,2d}^*$ denote the point-evaluation functional $\ell_z(p) := p(z)$. Then $M(\ell_z) = v_d(z) v_d(z)\tran$, and every rank-one extreme ray of $\SosCone{n,2d}^*$ is generated by a scaled point evaluation $c\ell_z$ with $c \ge 0$~\cite{blekherman12jams-nonnegative-polynomial-sos}.

\paragraph{Background on symmetric tensors.}
Following the notation in~\cite{johnston23focs-computing-linear-sections-varieties}, let $\calV$ be an $n$-dimensional $\mathbb{R}$-vector space with standard basis $\{e_1, \dots, e_n\}$. For $v_1, \dots, v_d \in \calV$, their ordinary tensor product is the pure tensor $v_1 \otimes \cdots \otimes v_d \in \TenProd{\calV}{d}$, where $\TenProd{\calV}{d} := \calV \otimes \cdots \otimes \calV$ denotes the $d$-fold tensor power of $\calV$. Let $\SymGroup{d}$ denote the group of permutations of $d$ elements, and let $\SymTenProd{\calV}{d} \subseteq \TenProd{\calV}{d}$ be the subspace of symmetric tensors, namely those invariant under the action of $\SymGroup{d}$ on $\TenProd{\calV}{d}$. The projection map $\SymProj{\calV}{d}: \TenProd{\calV}{d} \to \SymTenProd{\calV}{d}$ is given by
\begin{align}
    \label{eq:pre:sym-proj}
    v_1 \stimes \cdots \stimes v_d := \SymProj{\calV}{d}(v_1 \otimes \cdots \otimes v_d) = \frac{1}{d!} \sum_{\sigma \in \SymGroup{d}} v_{\sigma(1)} \otimes \cdots \otimes v_{\sigma(d)}.
\end{align}
When $v_1 = \cdots = v_d = v$, we abbreviate $v_1 \otimes \cdots \otimes v_d$ as $v^{\otimes d} = v^{\stimes d}$. The standard basis of $\calV$ induces a basis of $\SymTenProd{\calV}{d}$ given by $\{ e_1^{\stimes \alpha_1} \stimes \cdots \stimes e_n^{\stimes \alpha_n} \}_{\alpha \in \MultiIndex{n}{d}}$. Therefore, $\SymTenProd{\calV}{d} \cong \Real{\binom{n+d-1}{d}}$. In particular, $\SymTenProd{\calV}{2} \cong \Real{\binom{n+1}{2}} \cong \sym{n}$.

\paragraph{Background on algebraic geometry.}
Following~\cite{johnston23focs-computing-linear-sections-varieties}, let $\calV$ be an $n$-dimensional $\mathbb{R}$-vector space. A subset $\calX \subseteq \calV$ is a variety if there exist polynomials $f_1, \dots, f_p \in \mathbb{R}[x_1, \dots, x_n]$ such that $\calX = \{v \in \calV \mid f_1(v) = \cdots = f_p(v) = 0\}$.
The Zariski topology on $\calV$ is the topology whose closed sets are precisely these varieties. If $\calX \subseteq \calV$ is a variety, then $\calX$ carries the induced subspace topology: $\calU \subseteq \calX$ is open iff $\calU = \calX \cap \calO$ for some Zariski-open subset $\calO \subseteq \calV$~\cite[Chapter 1]{hartshorne13springer-algebraic-geometry}. For any $\calA \subseteq \calV$, let $\clE(\calA)$ and $\clZ(\calA)$ denote its Euclidean and Zariski closures, respectively. If $\calA \subseteq \calX$ for a variety $\calX$, let $\clZ[\calX](\calA)$ denote the closure of $\calA$ in the induced Zariski topology on $\calX$; since $\calX$ is Zariski closed, $\clZ(\calA) = \clZ[\calX](\calA)$. A variety $\calX$ is conic if it is cut out by homogeneous polynomials. For an irreducible conic variety $\calX \subseteq \calV$, non-degeneracy of order $d$ means that no nonzero homogeneous polynomial of degree $d$ vanishes on $\calX$; in particular, non-degeneracy of order $1$ is equivalent to $\linspan(\calX) = \calV$. For example,
\begin{align}
    \label{eq:pre:sym-prod-tensor}
    \SymRankOne := \{c v^{\otimes d} \mid c \in \mathbb{R}, v \in \calV \} \subseteq \SymTenProd{\calV}{d}
\end{align}
is the set of symmetric product tensors in $\SymTenProd{\calV}{d}$, and it is non-degenerate of order $1$~\cite[Section 2.3]{johnston23focs-computing-linear-sections-varieties}. Finally, a property $P : \calX \to \{0,1\}$ holds \emph{generically} on a variety $\calX$ if there exists a Zariski-open dense subset $\calA \subseteq \calX$ in the induced subspace topology such that $P(x) = 1$ for all $x \in \calA$. In particular, the set on which $P$ holds is Euclidean dense in $\calX$~\cite[Section 2.2]{johnston23focs-computing-linear-sections-varieties}. This notion extends naturally to Cartesian products of varieties~\cite[Section 2.2]{johnston23focs-computing-linear-sections-varieties}: if $\calX_1, \dots, \calX_r \subseteq \calV$ are varieties, then $\calX_1 \times \cdots \times \calX_r \subseteq \calV^{\times r}$ is again a variety. We say that a property $P$ holds \emph{generically} on $\calX_1 \times \cdots \times \calX_r$ if there exists a Zariski-open dense subset $\calB \subseteq \calX_1 \times \cdots \times \calX_r$ such that $P(v_1, \dots, v_r)$ holds for all $(v_1, \dots, v_r) \in \calB$, where $v_i \in \calX_i$ for each $i \in [r]$.


\section{Simplicial Regularizability of the Pseudo-Moment Cone}
\label{sec:sim}

In this section, we aim to establish the following theorem.
\begin{theorem}[Simplicial regularizability of the pseudo-moment cone]
    \label{thm:sim:minface-condition}
    Given positive integers $n, d$, define $\MomConePs{n+1,2d}$ as in~\eqref{eq:pre:moment-cone-ps}. Suppose $s$ is a positive integer such that
    \begin{align}
        \label{eq:sim:minface-s}
        s \le \frac{1}{2} \left( \binom{n+d}{d} + 1 \right) - \binom{n+d}{d}^{-1} \binom{n+2d}{2d}.
    \end{align}
    For fixed $d \ge 2$, the upper bound in~\eqref{eq:sim:minface-s} grows as $\Theta(n^d)$. 
    Then, the following geometric condition
    \begin{align}
        \label{eq:sim:minface-condition}
        \minface \left( \sum_{i=1}^s w_i^2 m_d(z_i) m_d(z_i)\tran, \MomConePs{n+1,2d} \right) = \cone\left( \left\{m_d(z_i) m_d(z_i)\tran \right\}_{i=1}^s \right)
    \end{align}
    holds for generically chosen $\{(w_i, z_i)\}_{i=1}^s \in (\reals \times \Real{n})^{\times s}$, with $\{m_d(z_i) m_d(z_i)\tran \}_{i=1}^s$ linearly independent.
\end{theorem}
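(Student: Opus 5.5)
Let $X := \sum_{i=1}^s w_i^2 m_d(z_i)m_d(z_i)\tran$, $N := \binom{n+d}{d}$, and $V := [m_d(z_1),\dots,m_d(z_s)] \in \Real{N\times s}$. The plan is to reduce the claimed face equality to a statement about rank-one matrices in a linear subspace. By the standard facial structure of spectrahedra, $\minface(X,\MomConePs{n+1,2d}) = \Hankel{n+1,d} \cap \{ Y \succeq 0 \mid \im(Y) \subseteq \im(X)\}$.

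For generic $z_i$ with $s \le N$, the vectors $m_d(z_i)$ are linearly independent. Since the upper bound in~\eqref{eq:sim:minface-s} is below $N$, this gives $\im(X) = \im(V)$. Hence every $Y$ in the minimal face can be written $Y = V G V\tran$ with $G \in \psd{s}$. The face is then the PSD part of the linear space
\begin{align*}
  \calL := \{ G \in \sym{s} \mid V G V\tran \in \Hankel{n+1,d} \},
\end{align*}
pushed forward by $G \mapsto VGV\tran$. The diagonal matrices always lie in $\calL$, because each $m_d(z_i)m_d(z_i)\tran$ is Hankel. So~\eqref{eq:sim:minface-condition} is equivalent to two facts: $\calL = \{\text{diagonal matrices}\}$ generically, and the $m_d(z_i)m_d(z_i)\tran$ are linearly independent. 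The second fact follows from the first, or directly from independence of the $m_d(z_i)$. Once $\calL$ is diagonal, $\calL \cap \psd{s}$ is the nonnegative orthant and the face is exactly the simplicial cone.

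To show $\calL$ is diagonal, count dimensions. The Hankel condition on $VGV\tran$ says that the symmetric matrix $\sum_{i,j} G_{ij} m_d(z_i) m_d(z_j)\tran$, viewed as an element of $\SymTenProd{\Real{n+1}}{d} \otimes_{\mathrm{sym}} \SymTenProd{\Real{n+1}}{d}$, lies in $\SymTenProd{\Real{n+1}}{2d}$. Equivalently, its projection onto the complement of $\Hankel{n+1,d}$ in $\sym{N}$ vanishes. That complement has dimension $\binom{N+1}{2} - \binom{n+2d}{2d}$. So $\calL$ is cut out by $\binom{N+1}{2}-\binom{n+2d}{2d}$ linear equations in the off-diagonal unknowns $G_{ij}$, $i<j$, of which there are $\binom{s}{2}$. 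Injectivity on the off-diagonal part requires roughly $\binom{s}{2} \le \binom{N+1}{2} - \binom{n+2d}{2d}$.

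The bound~\eqref{eq:sim:minface-s} is of the form that arises in Johnston et al.'s linear-section argument~\cite[Theorem 15]{johnston23focs-computing-linear-sections-varieties}. I would follow their proof. Apply it to the variety $\calX = \{ c\, v_d(\tz)^{\otimes 2} \}$ inside the subspace $\calS := \Hankel{n+1,d}$, or dually to the kernel map $Q := \sym{N} \to \sym{N}/\Hankel{n+1,d}$ restricted to symmetric products $a \stimes b$ with $a,b \in \im(v_d)$. The key claim is that, generically, the vectors $Q(m_d(z_i) \stimes m_d(z_j))$ for $i<j$ are linearly independent. Their theorem gives exactly this kind of generic independence of the pairwise symmetric products, under a dimension inequality on $s$.

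Because independence is a Zariski-open condition (nonvanishing of a minor), it suffices to exhibit one configuration where it holds and to use irreducibility of $(\reals\times\Real{n})^{\times s}$. The weights $w_i$ play no role beyond being nonzero. Finally, the $\Theta(n^d)$ growth is routine: $N \sim n^d/d!$ and $\binom{n+2d}{2d}/N \sim n^d d!/(2d)!$, so the bound is about $n^d\big(\tfrac{1}{2d!} - \tfrac{d!}{(2d)!}\big)$, which is positive for $d\ge2$.

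\textbf{Main obstacle.} The hard step is verifying the nondegeneracy hypothesis needed to transplant \cite[Theorem 15]{johnston23focs-computing-linear-sections-varieties}. One must show that no nonzero element of the "non-Hankel" quotient is annihilated by all symmetric products of Veronese points beyond what the dimension count predicts. Equivalently, the map $(a,b) \mapsto Q(a\stimes b)$ on the Veronese variety must behave generically. I would first try to match their hypotheses directly, using that the Veronese variety $\{v_d(\tz)\}$ is irreducible and non-degenerate of order $1$ and that $\Hankel{n+1,d}$ is precisely $\linspan$ of $\{v_d(\tz)v_d(\tz)\tran\}$. The exact constants in~\eqref{eq:sim:minface-s} (the $\tfrac12(N+1)$ and the division by $N$) suggest that their bound is applied with $\dim\calS = \binom{n+2d}{2d}$ and ambient $\SymTenProd{\Real{N}}{2}$.
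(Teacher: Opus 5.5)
Your proposal is correct and follows essentially the paper's route: with $N=\binom{n+d}{d}$ and $V=[m_d(z_1),\dots,m_d(z_s)]$ of full column rank, you reduce the face identity to $V\sym{s}V\tran\cap\Hankel{n+1,d}=\linspan\{m_d(z_i)m_d(z_i)\tran\}_{i=1}^s$, obtain this from Theorem~\ref{thm:sim:calK-dim} with $\kappa=2$, the conic variety $\{c\,u^{\otimes d}\}\subseteq\SymTenProd{\Real{n+1}}{d}\cong\Real{N}$ and $\calK$ the Hankel subspace (so the required $\codim(\calK)\ge sN$, with $\codim(\calK)=\binom{N+1}{2}-\binom{n+2d}{2d}$, is exactly~\eqref{eq:sim:minface-s}), and then intersect with the PSD cone. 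The step you flag as the main obstacle is in fact immediate: the theorem imposes no condition relating $\calK$ to the variety and only requires order-$1$ non-degeneracy of the conic variety of symmetric product tensors (not of the affine Veronese $\{v_d(\tz)\}$, which is not conic), after which genericity is pulled back to $(w_i,z_i)$ through the Zariski-dense map $(w,z)\mapsto w\,\tz^{\otimes d}$, as in the paper's Step~2.
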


Our proof relies on the following key result from~\cite{johnston23focs-computing-linear-sections-varieties}.
\begin{theorem}[\titlemath{Generic mixed-term avoidance~\cite[Theorem 15]{johnston23focs-computing-linear-sections-varieties}}]
    \label{thm:sim:calK-dim}
    Let $\calV$ be an $N$-dimensional $\reals$-vector space, $\kappa, r$ be positive integers, $s \in [r]$, and $\calK \subseteq \SymTenProd{\calV}{\kappa}$ a linear subspace. Let $\calX_1, \dots, \calX_r \subseteq \calV$ be conic varieties for which $\calX_1, \dots, \calX_s$ are non-degenerate of order $\kappa - 1$ and $\calX_{s+1}, \dots, \calX_r$ are non-degenerate of order $\kappa$. If 
    \begin{align}
        \label{eq:sim:calK-dim}
        \codim(\calK) \ge r(\kappa - 1)! \binom{
            N+\kappa-2
        }{\kappa-1},
    \end{align}
    then for generically chosen $v_1 \in \calX_1, \dots, v_r \in \calX_r$, it holds that 
    \begin{align}
        \label{eq:sim:linspace-variety-intersection}
        \linspan\left\{ 
            v_1^{\stimes \alpha_1} \stimes \cdots \stimes v_r^{\stimes \alpha_r} \mymid \alpha \in \MultiIndex{r}{\kappa} \backslash \Delta_s
         \right\} \cap \calK = \{0\},
    \end{align}
    where $\Delta_s := \{ \kappa e_i \mid i \in [s] \} \subset \Natural{r}$, and $e_i$ denotes the $i$-th standard basis vector in $\Real{r}$.
\end{theorem}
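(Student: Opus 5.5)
The plan is to split the argument into a \emph{genericity reduction} and an \emph{existence argument}, and to handle the existence argument by induction on $r$ together with a contraction (polarization) argument in the degree $\kappa$. Throughout I take the $\calX_i$ to be irreducible, as the definition of non-degeneracy in \S\ref{sec:pre} presupposes.

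\textbf{Step 1 (Zariski openness).} Fix a basis of a complement of $\calK$, so that $\calK$ is the kernel of a linear map $Q:\SymTenProd{\calV}{\kappa}\to\Real{c}$ with $c=\codim(\calK)$. Condition~\eqref{eq:sim:linspace-variety-intersection} says two things: the mixed tensors $v_1^{\stimes\alpha_1}\stimes\cdots\stimes v_r^{\stimes\alpha_r}$, $\alpha\in\MultiIndex{r}{\kappa}\setminus\Delta_s$, are linearly independent, and their images under $Q$ remain linearly independent. Both conditions amount to a fixed-size matrix, with entries polynomial in $(v_1,\dots,v_r)$, having full column rank. Full column rank is the non-vanishing of some maximal minor, so the set where it holds is Zariski open in $\calX_1\times\cdots\times\calX_r$. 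A product of irreducible varieties is irreducible, so a nonempty open subset is automatically dense. It therefore suffices to exhibit a single tuple for which~\eqref{eq:sim:linspace-variety-intersection} holds. One caveat: if linear independence of the mixed tensors fails generically, I would replace ``full column rank'' by ``rank of the $Q$-image equals rank of the spanning matrix'', which is handled by lower semicontinuity of rank on the open stratum where the rank is maximal.

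\textbf{Step 2 (one-point contraction lemma).} The core tool is a statement about adding a single generic point. Let $\calL\subseteq\SymTenProd{\calV}{\kappa}$ be a subspace, let $\calX$ be non-degenerate of order $\kappa-1$, and take $v\in\calX$ generic. The new mixed tensors involving $v$ with exponent $1\le\alpha<\kappa$ all lie in the image of $T\mapsto v\stimes T$ on $\SymTenProd{\calV}{\kappa-1}$, a space of dimension at most $\binom{N+\kappa-2}{\kappa-1}$. The exponent $\alpha=\kappa$ (the pure power $v^{\otimes\kappa}$) is handled by non-degeneracy of order $\kappa$ when $v$ is one of the points $v_{s+1},\dots,v_r$. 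I then contract with a covector: the condition that $v\stimes T\in\calL$, contracted against a functional, gives a polynomial condition of degree $\kappa-1$ in $v$. Non-degeneracy of order $\kappa-1$ ensures this polynomial is not identically zero on $\calX$ unless it vanishes identically. From this I aim to show that adjoining the $v$-terms to the existing mixed span lowers the available codimension by at most $(\kappa-1)!\binom{N+\kappa-2}{\kappa-1}$. The factor $(\kappa-1)!$ should come from iterating the contraction $\kappa-1$ times: each of the remaining $\kappa-1$ tensor slots is peeled off in turn, and the multinomial normalization in~\eqref{eq:pre:sym-proj} collects the $(\kappa-1)!$ orderings.

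\textbf{Step 3 (induction on $r$).} Order the points as $v_1,\dots,v_r$ and add them one at a time. Maintain the subspace $\calK_j:=\calK+\linspan\{\text{mixed terms among } v_1,\dots,v_j\}$ together with the invariant that the sum is direct, i.e.\ $\codim(\calK_j)$ has dropped by exactly the number of mixed terms added so far. The key point is that the mixed terms in which $v_{j+1}$ appears all lie in $v_{j+1}\stimes\SymTenProd{\calV}{\kappa-1}$, and only their intersection with $\calK_j$ matters. By the lemma in Step~2 this intersection is trivial provided $\codim(\calK_j)\ge(r-j)(\kappa-1)!\binom{N+\kappa-2}{\kappa-1}$. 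Hypothesis~\eqref{eq:sim:calK-dim} supplies this budget for all $j$ simultaneously. At $j=r$ the directness of the sum is exactly~\eqref{eq:sim:linspace-variety-intersection}.

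\textbf{Main obstacle.} I expect Step~2 to be the hard part. The mixed terms are not the full space $v\stimes\SymTenProd{\calV}{\kappa-1}$: they involve only the previously chosen points. So the naive bound of ``one codimension loss of $\binom{N+\kappa-2}{\kappa-1}$ per point'' must be justified for an arbitrary, adversarially fixed subspace $\calK_j$. My first attempt would be an induction on $\kappa$. Write a hypothetical nonzero element of $\calK\cap\linspan(\cdots)$, contract it with a generic covector to drop one degree, and apply the $(\kappa-1)$ case to the subspace of contracted tensors. That subspace has codimension reduced by a factor of at most $\kappa-1$ relative to $\calK$, and this would produce the $(\kappa-1)!$ factor. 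The base case $\kappa=1$ reduces to linear independence of generic points in a spanning variety.
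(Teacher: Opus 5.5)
The paper does not prove this statement; it imports it from \cite[Theorem 15]{johnston23focs-computing-linear-sections-varieties}. Your argument therefore has to stand on its own, and it has a genuine gap exactly where you flag it.

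Step~1 is fine. Its caveat is not even needed: for $\kappa\ge 2$ the hypothesis forces $r\le (N+\kappa-1)/\kappa!\le N$, and every $\calX_i$ spans $\calV$. So generic $v_i$ are linearly independent, and hence so are all their degree-$\kappa$ monomials.

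The problem is the lemma you invoke in Step~3: that the new terms containing $v_{j+1}$ meet $\calK_j=\calK+\calM_j$ trivially once $\codim(\calK_j)$ is large enough. No statement depending only on the codimension of an ``adversarially fixed'' $\calK_j$ can hold. The new terms are built from $v_1,\dots,v_j$, the very points already absorbed into $\calK_j$. For $\kappa=2$, suppose $\calK_j\supseteq v_1\stimes\calV$; this is only $N$ dimensions, so it is compatible with codimension $\binom{N}{2}$. Then $v_{j+1}\stimes v_1\in\calK_j$ for every choice of $v_{j+1}$. What makes the theorem true is that $\calK$ is fixed before the points are drawn, and your bookkeeping throws that information away.

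Your proposed repair also fails. Contracting with a covector does not lose codimension by at most a factor $\kappa-1$. For $\kappa=2$, $\calK=\linspan\{e_i\stimes e_i\}_{i=1}^N$ has codimension $\binom{N}{2}$, yet its contraction with a generic covector is all of $\calV$. A nonzero element of $\calK\cap\linspan(\cdots)$ may also contract to $0$. Finally, the ``$(\kappa-1)!$ orderings'' explanation is numerology rather than a mechanism.

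A route that does close reverses your bookkeeping; this is an argument that works, not a reconstruction of the cited source. Peel off the last point by enlarging $\calK$ with the whole space $v_r\stimes\SymTenProd{\calV}{\kappa-1}$, not with mixed terms. The space $\calK':=\calK+v_r\stimes\SymTenProd{\calV}{\kappa-1}$ loses at most $\binom{N+\kappa-2}{\kappa-1}$ of codimension and depends only on $v_r$. So, fixing a generic $v_r$ first, induction on $r$ shows that the mixed terms in $v_1,\dots,v_{r-1}$ avoid $\calK'$. Writing an element of $\calK\cap\linspan(\cdots)$ as $m+n$ with $n\in v_r\stimes\SymTenProd{\calV}{\kappa-1}$ then forces $m=0$.

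It remains to kill $n=\sum_{a\ge1}v_r^{\stimes a}\stimes T_a$, where $T_a\in\SymTenProd{\calU}{\kappa-a}$ and $\calU:=\linspan\{v_1,\dots,v_{r-1}\}$; the term $a=\kappa$ appears only when $r>s$. Do this one power of $v_r$ at a time via $\calK^{(a)}:=\{T\in\SymTenProd{\calV}{\kappa-a}\mid v_r^{\stimes a}\stimes T\in\calK\}$. This gives $T_a\in\SymTenProd{\calU}{\kappa-a}\cap(\calK^{(a)}+v_r\stimes\SymTenProd{\calV}{\kappa-a-1})$. That is a degree-$(\kappa-a)$ instance of the theorem for $r-1$ points with no pure powers excluded.

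The quantitative input you are missing is $\codim(\calK^{(a)})\ge\codim(\calK)/\binom{N+a-1}{a}$ for generic $v_r\in\calX_r$. To see it, note that $\codim(\calK^{(a)})$ is the rank of $f\mapsto f(v_r^{\stimes a}\stimes\cdot\,)$ on the annihilator of $\calK$. Choose $w_1,\dots,w_M\in\calX_r$, $M=\binom{N+a-1}{a}$, with $w_i^{\otimes a}$ spanning $\SymTenProd{\calV}{a}$; this is possible precisely by non-degeneracy of order $a$. A functional killed by all of these maps vanishes on $\SymTenProd{\calV}{a}\stimes\SymTenProd{\calV}{\kappa-a}=\SymTenProd{\calV}{\kappa}$, so $\codim(\calK)$ is at most $M$ times the generic rank.

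This is exactly where order $\kappa-1$ versus order $\kappa$ enters: the top level $a=\kappa$, which needs $v_r^{\otimes\kappa}\notin\calK$, occurs only for $r>s$. A short computation shows both recursions close under $\codim(\calK)\ge r(\kappa-1)!\binom{N+\kappa-2}{\kappa-1}$.
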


Before stepping into the proof of Theorem~\ref{thm:sim:minface-condition}, we provide several elementary lemmas.
We start by two lemmas on generic properties.

\begin{lemma}[Generic property under polynomial mapping]
    \label{lem:sim:generic-polymap}
    Let $\calV$ and $\calW$ be two finite-dimensional $\reals$-vector spaces, $\calX \subseteq \calV$ a variety, and $P$ a property that holds generically on $\calX$. Suppose $f : \calW \to \calV$ is a polynomial mapping such that $f(\calW) \subseteq \calX$ and $f(\calW)$ is Zariski dense in $\calX$. Then $P \circ f$ holds generically on $\calW$.
\end{lemma}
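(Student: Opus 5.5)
The plan is to unwind the definition of ``holds generically'' and transport the witnessing Zariski-open dense set back along $f$ by taking preimages. By hypothesis there is a Zariski-open dense subset $\calA \subseteq \calX$, in the induced topology on $\calX$, with $P(x) = 1$ for all $x \in \calA$. I will show that $\calB := f^{-1}(\calA) \subseteq \calW$ is Zariski open and dense in $\calW$. Then $(P \circ f)(w) = P(f(w)) = 1$ for every $w \in \calB$, which is exactly the claim.

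\emph{Openness.} Since $\calA$ is open in $\calX$, we can write $\calA = \calX \cap \calO$ with $\calO \subseteq \calV$ Zariski open. Then $\calV \setminus \calO$ is cut out by polynomials $g_1, \dots, g_p$. Because $f(\calW) \subseteq \calX$, we have $f^{-1}(\calA) = f^{-1}(\calO)$, and its complement is
\begin{align*}
    \{ w \in \calW \mid g_1(f(w)) = \cdots = g_p(f(w)) = 0 \}.
\end{align*}
Each $g_j \circ f$ is a polynomial on $\calW$ because $f$ is a polynomial map. So this complement is a variety, and $\calB$ is Zariski open. Equivalently, polynomial maps are Zariski continuous.

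\emph{Density.} This is the step that actually uses the hypothesis that $f(\calW)$ is Zariski dense in $\calX$, and I expect it to be the main (though mild) obstacle. Since $\calW$ is a vector space, it is irreducible in the Zariski topology, so any nonempty open subset is dense. It therefore suffices to show $\calB \neq \emptyset$. Suppose instead that $\calB = \emptyset$. Then $f(\calW) \subseteq \calX \setminus \calA$, and $\calX \setminus \calA = \calX \cap (\calV \setminus \calO)$ is Zariski closed. Density of $f(\calW)$ in $\calX$ would then force $\calX \subseteq \calX \setminus \calA$, that is, $\calA = \emptyset$. This contradicts the density of $\calA$ in $\calX$, provided $\calX \neq \emptyset$. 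If $\calX = \emptyset$, then $f(\calW) \subseteq \calX$ is impossible for nonempty $\calW$, so that case does not arise.

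One point to keep in mind is that $\calX$ itself need not be irreducible. The argument above never uses irreducibility of $\calX$, only irreducibility of $\calW$. This is why the proof goes through the domain: a nonempty open subset of $\calW$ is automatically dense, whereas an open subset of $\calX$ could miss whole components. Combining the two parts, $\calB$ is a Zariski-open dense subset of $\calW$ on which $P \circ f$ holds, so $P \circ f$ holds generically on $\calW$.
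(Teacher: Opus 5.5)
Your proof is correct and follows essentially the same route as the paper. Both pull back the witnessing open dense set $\calA$ along $f$, get openness from Zariski continuity of polynomial maps, get nonemptiness of $f^{-1}(\calA)$ from density of $f(\calW)$ in $\calX$, and conclude density from irreducibility of $\calW$; your explicit treatment of the empty case and your remark that irreducibility of $\calX$ is never used are small clarifications the paper leaves implicit.
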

\begin{proof}
    Let $\calA \subseteq \calX$ be a Zariski-open dense subset on which $P$ holds. Since $f(\calW) \subseteq \calX$, the preimage $f^{-1}(\calA)$ is well-defined. Moreover, because $f$ is a polynomial mapping, it is continuous in the Zariski topology, so $f^{-1}(\calA)$ is Zariski open in $\calW$. Since $f(\calW)$ is Zariski dense in $\calX$ and $\calA$ is Zariski open dense in $\calX$, we have $f(\calW) \cap \calA \neq \emptyset$, and hence $f^{-1}(\calA) \neq \emptyset$. Finally, because $\calW$ is irreducible in the Zariski topology, every nonempty Zariski-open subset of $\calW$ is dense. Therefore, $f^{-1}(\calA)$ is Zariski open dense in $\calW$, which shows that $P \circ f$ holds generically on $\calW$.
\end{proof}

\begin{lemma}[Generic property under finite conjunction]
    \label{lem:sim:generic-conjuction}
    Let $\calV$ be a finite-dimensional $\reals$-vector space and $\calX \subseteq \calV$ an irreducible variety. If properties $P_1$ and $P_2$ both hold generically on $\calX$, then $P_1 \wedge P_2$ also holds generically on $\calX$.
\end{lemma}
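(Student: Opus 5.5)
Since $P_1$ and $P_2$ hold generically on $\calX$, by definition there exist Zariski-open dense subsets $\calA_1, \calA_2 \subseteq \calX$ (in the induced subspace topology) such that $P_1$ holds on $\calA_1$ and $P_2$ holds on $\calA_2$. The plan is to show that $\calA_1 \cap \calA_2$ is a Zariski-open dense subset of $\calX$ on which $P_1 \wedge P_2$ holds. The last part is immediate, because every point of $\calA_1 \cap \calA_2$ lies in both sets.

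Openness is also routine. Write $\calA_j = \calX \cap \calO_j$ with $\calO_j \subseteq \calV$ Zariski open. Then $\calA_1 \cap \calA_2 = \calX \cap (\calO_1 \cap \calO_2)$, and $\calO_1 \cap \calO_2$ is Zariski open as a finite intersection of open sets.

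The only substantive step is density, and this is where irreducibility is used. I would first argue that each $\calA_j$ is nonempty. This holds because it is dense in $\calX$, and $\calX \ne \emptyset$, since irreducible spaces are by convention nonempty. Suppose, toward a contradiction, that $\calA_1 \cap \calA_2 = \emptyset$. Then $\calX = (\calX \setminus \calA_1) \cup (\calX \setminus \calA_2)$ expresses $\calX$ as a union of two proper closed subsets, which contradicts irreducibility. Hence $\calA_1 \cap \calA_2$ is a nonempty open subset of $\calX$.

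It remains to note that in an irreducible space every nonempty open subset is dense. Let $\calU$ be nonempty and open. Then $\calX = \clZ[\calX](\calU) \cup (\calX \setminus \calU)$, and the second set is a proper closed subset, so irreducibility forces $\clZ[\calX](\calU) = \calX$. Therefore $\calA_1 \cap \calA_2$ is Zariski-open dense in $\calX$, and $P_1 \wedge P_2$ holds generically on $\calX$.

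The argument extends by induction to any finite conjunction. It also applies to Cartesian products $\calX_1 \times \cdots \times \calX_r$, provided the product is irreducible. This is the case in the intended application, where the product is a real vector space $(\reals \times \Real{n})^{\times s}$. No step here is a genuine obstacle; the only care needed is to invoke irreducibility explicitly, since without it two open dense sets in different components could still intersect densely, but the clean argument depends on it.
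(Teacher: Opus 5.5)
Your proof is correct and follows the paper's argument: you take $\calA_1 \cap \calA_2$, note that it is open, and use irreducibility to conclude density. You also explicitly check that $\calA_1 \cap \calA_2 \neq \emptyset$, a step the paper's proof leaves implicit. As a minor aside, irreducibility is not actually needed here: in any topological space a dense set meets every nonempty open set, so the intersection of two open dense sets is again open dense. Your closing remark therefore overstates the role of irreducibility.
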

\begin{proof}
    By definition, there exist Zariski-open dense subsets $\calA_1, \calA_2 \subseteq \calX$ such that $P_1$ holds on $\calA_1$ and $P_2$ holds on $\calA_2$. Hence, $\calA_1 \cap \calA_2$ is Zariski open in $\calX$. Since $\calX$ is irreducible, every nonempty Zariski-open subset of $\calX$ is dense, so $\calA_1 \cap \calA_2$ is Zariski open dense in $\calX$. On this set, both $P_1$ and $P_2$ hold, and therefore $P_1 \wedge P_2$ holds generically on $\calX$.
\end{proof}

We then provide two auxiliary lemmas related to symmetric tensors.
\begin{lemma}[Isomorphism of symmetric square]
    \label{lem:sim:iso-sym-square}
    Let $\calV$ be a finite-dimensional $\reals$-vector space, and let $\phi : \calV \to \Real{N}$ be an isomorphism. Then there exists a unique linear isomorphism $\psi : \SymTenProd{\calV}{2} \to \sym{N}$ such that
    \begin{align}
        \label{eq:sim:iso-sym-square}
        \psi(x \stimes y) = \frac{1}{2} \phi(x) \phi(y)\tran + \frac{1}{2} \phi(y) \phi(x)\tran, \quad \forall x, y \in \calV.
    \end{align}
\end{lemma}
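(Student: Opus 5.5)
We prove Lemma~\ref{lem:sim:iso-sym-square} by constructing $\psi$ through the universal property of the tensor product, restricting it to symmetric tensors, and then checking injectivity and dimensions. Uniqueness is immediate. The elements $x \stimes y$ with $x,y \in \calV$ span $\SymTenProd{\calV}{2}$: by~\eqref{eq:pre:sym-proj}, $\SymProj{\calV}{2}$ is surjective onto $\SymTenProd{\calV}{2}$, and the pure tensors $x\otimes y$ span $\TenProd{\calV}{2}$. A linear map is therefore determined by~\eqref{eq:sim:iso-sym-square}.

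For existence, define the bilinear map $\calV \times \calV \to \sym{N}$ by $(x,y) \mapsto \frac12(\phi(x)\phi(y)\tran + \phi(y)\phi(x)\tran)$. This is bilinear because $\phi$ is linear. By the universal property it induces a linear map $\Psi : \TenProd{\calV}{2} \to \sym{N}$ with $\Psi(x\otimes y)$ equal to this expression. The expression is symmetric in $(x,y)$, so $\Psi(y \otimes x) = \Psi(x\otimes y)$, and hence $\Psi \circ \SymProj{\calV}{2} = \Psi$. Set $\psi := \Psi|_{\SymTenProd{\calV}{2}}$. Then $\psi(x\stimes y) = \Psi(\SymProj{\calV}{2}(x\otimes y)) = \Psi(x\otimes y)$, which is exactly~\eqref{eq:sim:iso-sym-square}.

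It remains to show that $\psi$ is an isomorphism. We compare dimensions: $\dim \SymTenProd{\calV}{2} = \binom{N+1}{2} = \dim \sym{N}$, by the basis description in the preliminaries with $n=N$ and $d=2$. It therefore suffices to show that $\psi$ is surjective. Let $e_i := \phi^{-1}(\epsilon_i)$, where $\epsilon_i$ is the $i$-th standard basis vector of $\Real{N}$. Then $\psi(e_i \stimes e_j) = \frac12(\epsilon_i\epsilon_j\tran + \epsilon_j\epsilon_i\tran)$ for all $i \le j$, and these matrices span $\sym{N}$. Hence $\psi$ is onto, and by the dimension count it is a linear isomorphism.

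The only point needing care is well-definedness on the symmetric subspace: $\psi$ must be defined on $\SymTenProd{\calV}{2}$ rather than on formal symbols $x\stimes y$. Defining $\psi$ as the restriction of $\Psi$ from the full tensor power, as above, handles this.
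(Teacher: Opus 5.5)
Your proof is correct and follows essentially the same route as the paper: a symmetric bilinear map induces $\psi$, and evaluating $\psi$ on the basis $\{e_i \stimes e_j\}_{i \le j}$ of $\SymTenProd{\calV}{2}$ shows it is an isomorphism. The differences are minor. You build $\psi$ by restricting the induced map $\Psi$ on $\TenProd{\calV}{2}$ to the symmetric subspace, whereas the paper cites the universal property of the symmetric square; your version matches the paper's definition of $\SymTenProd{\calV}{2}$ as invariant tensors and makes well-definedness explicit. You also conclude via surjectivity plus a dimension count rather than ``basis to basis.''
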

\begin{proof}
    Define $f(x, y) := \frac{1}{2} \phi(x) \phi(y)\tran + \frac{1}{2} \phi(y) \phi(x)\tran,  \forall x, y \in \calV.$
    Then $f$ is bilinear and symmetric. Hence, by the universal property of the symmetric square~\cite[Theorem 2.24]{gallier20springer-differential-geometry-lie-group}, $f$ induces a unique linear map $\psi : \SymTenProd{\calV}{2} \to \sym{N}$ satisfying~\eqref{eq:sim:iso-sym-square}.
    Since $\phi$ is an isomorphism from $\calV$ to $\Real{N}$, we may choose a basis $\{\varepsilon_i\}_{i=1}^{N}$ of $\calV$ and let $\{e_i\}_{i=1}^{N}$ be the standard basis of $\Real{N}$ such that $\phi(\varepsilon_i) = e_i$ for all $i \in [N]$. Then
    \begin{align*}
        \psi(\varepsilon_i \stimes \varepsilon_i) = e_i e_i\tran, \quad
        \psi(\varepsilon_i \stimes \varepsilon_j) = \frac{1}{2} (e_i e_j\tran + e_j e_i\tran), \quad i < j.
    \end{align*}
    Therefore, $\psi$ sends a basis of $\SymTenProd{\calV}{2}$ to a basis of $\sym{N}$, and hence $\psi$ is an isomorphism.
\end{proof}

\begin{lemma}[Expansion of rank-one symmetric tensor]
    \label{lem:sim:expansion-rank1-symtensor}
    Let $u \in \Real{n}$, and $\{e_i\}_{i=1}^n$ be the standard basis of $\Real{n}$. Then, for a positive integer $d$,
    \begin{align}
        \label{eq:sim:expansion-rank1-symtensor}
        u^{\otimes d} = \sum_{\alpha \in \MultiIndex{n}{d}} u^\alpha \binom{d}{\alpha} (e_1^{\stimes \alpha_1} \stimes \cdots \stimes e_n^{\stimes \alpha_n}).
    \end{align}
\end{lemma}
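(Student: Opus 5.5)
The plan is to expand $u^{\otimes d}$ multilinearly and then group the resulting pure tensors by multi-index. Write $u = \sum_{i=1}^n u_i e_i$. Multilinearity of the tensor product gives
\begin{align*}
    u^{\otimes d} = \sum_{(i_1,\dots,i_d) \in [n]^{\times d}} u_{i_1} \cdots u_{i_d}\, e_{i_1} \otimes \cdots \otimes e_{i_d}.
\end{align*}
Each word $(i_1,\dots,i_d)$ determines a multi-index $\alpha \in \MultiIndex{n}{d}$, where $\alpha_k$ counts how often $k$ appears in the word. The coefficient $u_{i_1}\cdots u_{i_d}$ equals $u^\alpha$, so it depends only on $\alpha$. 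Grouping by $\alpha$ therefore gives
\begin{align*}
    u^{\otimes d} = \sum_{\alpha \in \MultiIndex{n}{d}} u^\alpha\, T_\alpha, \qquad T_\alpha := \sum_{(i_1,\dots,i_d) \text{ of type } \alpha} e_{i_1} \otimes \cdots \otimes e_{i_d}.
\end{align*}

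The remaining step is to identify $T_\alpha$ with $\binom{d}{\alpha}\, e_1^{\stimes \alpha_1} \stimes \cdots \stimes e_n^{\stimes \alpha_n}$. Fix the canonical word $w_\alpha := (1,\dots,1,2,\dots,2,\dots,n,\dots,n)$, with $k$ repeated $\alpha_k$ times. By~\eqref{eq:pre:sym-proj}, $e_1^{\stimes \alpha_1} \stimes \cdots \stimes e_n^{\stimes \alpha_n}$ is $\frac{1}{d!}\sum_{\sigma \in \SymGroup{d}}$ of the pure tensors indexed by the permuted words $\sigma \cdot w_\alpha$. The group $\SymGroup{d}$ acts transitively on the words of type $\alpha$, and the stabilizer of $w_\alpha$ is $\SymGroup{\alpha_1} \times \cdots \times \SymGroup{\alpha_n}$, of order $\alpha_1! \cdots \alpha_n!$. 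Hence each word of type $\alpha$ appears exactly $\alpha_1!\cdots\alpha_n!$ times in this sum. This gives
\begin{align*}
    e_1^{\stimes \alpha_1} \stimes \cdots \stimes e_n^{\stimes \alpha_n} = \frac{\alpha_1!\cdots\alpha_n!}{d!}\, T_\alpha = \binom{d}{\alpha}^{-1} T_\alpha.
\end{align*}
Substituting $T_\alpha = \binom{d}{\alpha}\, e_1^{\stimes \alpha_1} \stimes \cdots \stimes e_n^{\stimes \alpha_n}$ into the grouped sum yields~\eqref{eq:sim:expansion-rank1-symtensor}.

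The only step needing care is this orbit-stabilizer count. It amounts to the elementary fact that the number of distinct words of type $\alpha$ is $\binom{d}{\alpha}$, and the rest is bookkeeping. One should also note that $u^{\otimes d} = u^{\stimes d}$ is already symmetric, as stated in the preliminaries, so the identity holds inside $\SymTenProd{\Real{n}}{d}$.
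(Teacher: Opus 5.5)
Your proof is correct and follows the paper's own argument: expand $u^{\otimes d}$ multilinearly, group the words by type $\alpha$ (the paper's set $I(\alpha)$), and use~\eqref{eq:pre:sym-proj} to identify the grouped sum with $\binom{d}{\alpha}$ times the symmetrized basis tensor. Your orbit--stabilizer count just makes explicit why each word of type $\alpha$ appears $\alpha_1!\cdots\alpha_n!$ times, a multiplicity the paper uses without comment.
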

\begin{proof}
    For any $\alpha \in \MultiIndex{n}{d}$, define 
    $
        I(\alpha) := \left\{  (i_1, \dots, i_d) \in [n]^{\times d} : \abs{\{r \mid i_r = j\}} = \alpha_j, \forall j \in [n] \right\}
    $, \ie the set of all $d$-tuples of indices that contain exactly $\alpha_j$ occurrences of $j$ for each $j \in [n]$.
    Then from~\eqref{eq:pre:sym-proj}, 
    \begin{align*}
        e_1^{\stimes \alpha_1} \stimes \cdots \stimes e_n^{\stimes \alpha_n} = \frac{\alpha_1! \cdots \alpha_n!}{d!} \sum_{(i_1, \dots, i_d) \in I(\alpha)} e_{i_1} \otimes \cdots \otimes e_{i_d} = \binom{d}{\alpha}^{-1} \sum_{(i_1, \dots, i_d) \in I(\alpha)} e_{i_1} \otimes \cdots \otimes e_{i_d}, \ \forall \alpha \in \MultiIndex{n}{d}.
    \end{align*}
    On the other hand, since $u = \sum_{i=1}^n u_i e_i$, we have 
    \begin{align*}
        u^{\otimes d} = \sum_{(i_1,\dots,i_d) \in [n]^{\times d}} u_{i_1} \cdots u_{i_d} e_{i_1} \otimes \cdots \otimes e_{i_d} = \sum_{\alpha \in \MultiIndex{n}{d}} u^\alpha \sum_{(i_1, \dots, i_d) \in I(\alpha)} e_{i_1} \otimes \cdots \otimes e_{i_d}. 
    \end{align*}
    Combining the above two equations, we have established~\eqref{eq:sim:expansion-rank1-symtensor}. 
\end{proof}

Finally, we provide three lemmas on linear algebra and convex geometry. 
\begin{lemma}[Intersection between subspaces]
    \label{lem:sim:intersection-subspaces}
    Let $\calV$ be a finite-dimensional $\reals$-vector space, and $\calL_1, \calL_2, \calK \subseteq \calV$ be three subspaces. If $\calL_1 \cap \calK = \{0\}$ and $\calL_2 \subseteq \calK$, then $(\calL_1 + \calL_2) \cap \calK = \calL_2$.   
\end{lemma}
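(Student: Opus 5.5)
The statement to prove is Lemma~\ref{lem:sim:intersection-subspaces}: if $\calL_1 \cap \calK = \{0\}$ and $\calL_2 \subseteq \calK$, then $(\calL_1 + \calL_2) \cap \calK = \calL_2$. This is a purely linear-algebraic fact. I would prove it by establishing the two inclusions separately, using only the subspace axioms.

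For the inclusion $\calL_2 \subseteq (\calL_1 + \calL_2) \cap \calK$, I would note that $\calL_2 \subseteq \calL_1 + \calL_2$ because $0 \in \calL_1$. Together with the hypothesis $\calL_2 \subseteq \calK$, this gives the inclusion immediately.

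For the reverse inclusion, I would take an arbitrary $x \in (\calL_1 + \calL_2) \cap \calK$ and write $x = a + b$ with $a \in \calL_1$ and $b \in \calL_2$. Then $a = x - b$. Since $x \in \calK$ and $b \in \calL_2 \subseteq \calK$, and $\calK$ is a subspace, it follows that $a \in \calK$. Hence $a \in \calL_1 \cap \calK = \{0\}$, so $a = 0$ and $x = b \in \calL_2$.

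Combining the two inclusions gives the claimed equality. No step is a real obstacle; the only point that needs care is using the closure of $\calK$ under subtraction to move the $\calL_1$-component into $\calK$. Note that finite-dimensionality of $\calV$ is not actually needed for this argument.
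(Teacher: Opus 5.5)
Your proposal is correct and follows the paper's proof essentially verbatim: write $x = l_1 + l_2$, use closure of $\calK$ under subtraction to get $l_1 \in \calL_1 \cap \calK = \{0\}$, and note that the reverse inclusion is immediate. Your remark that finite-dimensionality of $\calV$ is not used is also accurate.
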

\begin{proof}
    If $x \in (\calL_1 + \calL_2) \cap \calK$, then $x = l_1 + l_2$ for some $l_1 \in \calL_1$ and $l_2 \in \calL_2$. Since $x \in \calK$ and $l_2 \in \calK$, $l_1 = x - l_2 \in \calK$. But $l_1 \in \calL_1$, so $l_1 \in \calL_1 \cap \calK = 0$. Hence, $x = l_2 \in \calL_2$ and $(\calL_1 + \calL_2) \cap \calK \subseteq \calL_2$. The reverse inclusion is immediate because $\calL_2 \subseteq \calL_1 + \calL_2$ and $\calL_2 \subseteq \calK$. 
\end{proof}

\begin{lemma}[Facial structure of the PSD cone]
    \label{lem:sim:V-sym-Vt-psd}
    Let $V \in \Real{N \times r}$ with $r \le N$ be of full column rank. Then
    $V \sym{r} V\tran \cap \psd{N} = V \psd{r} V\tran$, where $V \sym{r} V\tran := \{V C V\tran \mid C \in \sym{r}\}$ and $V \psd{r} V\tran := \{V C V\tran \mid C \in \psd{r}\}$.
\end{lemma}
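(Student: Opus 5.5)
We prove the two inclusions $V \psd{r} V\tran \subseteq V \sym{r} V\tran \cap \psd{N}$ and $V \sym{r} V\tran \cap \psd{N} \subseteq V \psd{r} V\tran$ separately. The idea is that congruence by a full-column-rank matrix preserves positive semidefiniteness in both directions, once we use a left inverse of $V$.

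\textbf{First inclusion.} Take $C \in \psd{r}$. Then $VCV\tran$ is symmetric. For every $x \in \Real{N}$,
\begin{align*}
    x\tran V C V\tran x = (V\tran x)\tran C (V\tran x) \ge 0 .
\end{align*}
Hence $VCV\tran \in \psd{N}$. Since also $\psd{r} \subseteq \sym{r}$, we get $VCV\tran \in V \sym{r} V\tran \cap \psd{N}$.

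\textbf{Second inclusion.} Suppose $X = VCV\tran$ with $C \in \sym{r}$ and $X \succeq 0$. Because $V$ has full column rank, $V\tran V$ is invertible. Set $L := (V\tran V)^{-1} V\tran \in \Real{r \times N}$, so that $LV = I_r$. Then
\begin{align*}
    C = (LV)\, C\, (LV)\tran = L X L\tran .
\end{align*}
For any $y \in \Real{r}$ this gives $y\tran C y = (L\tran y)\tran X (L\tran y) \ge 0$. So $C \in \psd{r}$, and therefore $X \in V \psd{r} V\tran$.

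\textbf{Remark.} No step here is a real obstacle. The only point that needs care is recovering $C$ from $X$, which is exactly where the full-column-rank hypothesis on $V$ is used. The hypothesis $r \le N$ is automatic from full column rank.
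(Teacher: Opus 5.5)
Your proof is correct and is essentially the paper's argument. The paper handles the reverse inclusion by noting that $V\tran$ is surjective and picking $x$ with $V\tran x = y$; your $x = L\tran y$ with $L = (V\tran V)^{-1}V\tran$ is simply an explicit such preimage, since $V\tran L\tran = (LV)\tran = I_r$.
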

\begin{proof}
    The inclusion $V \psd{r} V\tran \subseteq V \sym{r} V\tran \cap \psd{N}$ is immediate. Conversely, let $X \in V \sym{r} V\tran \cap \psd{N}$. Then $X = V C V\tran$ for some $C \in \sym{r}$. Since $V$ has full column rank, $V\tran : \Real{N} \to \Real{r}$ is surjective. Hence, for any $y \in \Real{r}$, there exists $x \in \Real{N}$ such that $V\tran x = y$. Therefore,
    $y\tran C y = x\tran V C V\tran x = x\tran X x \ge 0$,
    because $X \in \psd{N}$. Thus $C \in \psd{r}$, so $X \in V \psd{r} V\tran$. This proves the result.
\end{proof}

\begin{lemma}[Minimal faces in the PSD cone]
    \label{lem:sim:minimal-faces-psd-cone}
    If $X = V \Lambda V\tran$ with $V \in \Real{N \times r}$ ($r \le N$) of full column rank and $\Lambda \in \pd{r}$, then $\minface(X, \psd{N}) = V \psd{r} V\tran$.
\end{lemma}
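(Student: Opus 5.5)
The plan is to set $\calF := V \psd{r} V\tran$ and prove three things: (i) $\calF$ is a face of $\psd{N}$; (ii) $X$ lies in $\ri(\calF)$; (iii) any face of $\psd{N}$ containing a relative interior point of $\calF$ contains all of $\calF$. Together these give $\minface(X,\psd{N}) = \calF$.

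\emph{Step (i): $\calF$ is a face.} The set $\calF$ is clearly a convex cone contained in $\psd{N}$. Suppose $A, B \in \psd{N}$ with $A + B \in \calF$.
\begin{itemize}
    \item For any $x$ with $V\tran x = 0$, we have $x\tran (A+B) x = 0$. Since both terms are nonnegative, $x\tran A x = 0$, and because $A \succeq 0$ this gives $Ax = 0$.
    \item Hence $\ker(V\tran) \subseteq \ker(A)$, which means $\im(A) \subseteq \im(V)$.
    \item Let $V^{+} := (V\tran V)^{-1} V\tran$, which exists by full column rank, and let $P := V V^{+}$ be the orthogonal projector onto $\im(V)$. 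Then $A = P A P = V C V\tran$ with $C := V^{+} A (V^{+})\tran \in \sym{r}$.
    \item By Lemma~\ref{lem:sim:V-sym-Vt-psd}, $A \in V \sym{r} V\tran \cap \psd{N} = V\psd{r} V\tran = \calF$.
\end{itemize}
The same argument applies to $B$, so $\calF$ is a face. This range-inclusion step is the only nontrivial part of the proof. It hinges on the implication $x\tran A x = 0 \Rightarrow Ax = 0$ for PSD $A$, which is worth stating explicitly.

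\emph{Step (ii): $X \in \ri(\calF)$.} The map $C \mapsto V C V\tran$ is linear and injective on $\sym{r}$: if $V C V\tran = 0$, multiplying by $V^{+}$ on the left and $(V^{+})\tran$ on the right gives $C = 0$. So it is a linear isomorphism from $\sym{r}$ onto its image, and it maps $\psd{r}$ onto $\calF$. Because $\Lambda \in \pd{r}$ lies in the interior of $\psd{r}$, its image $X = V\Lambda V\tran$ lies in $\ri(\calF)$.

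\emph{Step (iii): minimality.} Let $\calG$ be any face of $\psd{N}$ with $X \in \calG$, and take $Y \in \calF$. Since $X \in \ri(\calF)$, there is $\varepsilon > 0$ with $Z := X - \varepsilon (Y - X) \in \calF \subseteq \psd{N}$. Then
\begin{align*}
    X = \tfrac{1}{1+\varepsilon} Z + \tfrac{\varepsilon}{1+\varepsilon} Y,
\end{align*}
a proper convex combination of two points of $\psd{N}$ that lies in $\calG$. By the face property, $Y \in \calG$, so $\calF \subseteq \calG$. Combined with (i) and $X \in \calF$, this shows $\minface(X,\psd{N}) = \calF$.
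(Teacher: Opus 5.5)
Your proof is correct, and it takes a different route from the paper's. The paper first replaces $V$ by an orthonormal basis through a thin QR factorization $V = QR$, so that $X = Q(R\Lambda R\tran)Q\tran$ with $R\Lambda R\tran \in \pd{r}$. It then cites an external result (Lewis, Corollary~6.1) for $\minface(X,\psd{N}) = Q\psd{r}Q\tran$, and finally checks $Q\psd{r}Q\tran = V\psd{r}V\tran$ by absorbing $R$ into the middle factor.

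You instead prove everything from first principles. You show directly that $V\psd{r}V\tran$ is a face, via the range-inclusion argument ($x\tran A x = 0 \Rightarrow Ax = 0$ for PSD $A$). You locate $X$ in its relative interior through the injective linear map $C \mapsto VCV\tran$. You then use the general fact that any face containing a relative interior point of a convex subset contains that whole subset.

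Your argument is self-contained and never needs orthonormal columns, so it effectively re-derives the cited PSD-face characterization for an arbitrary full-column-rank $V$. The paper's version is shorter but leans on the external characterization.

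One small simplification: in Step (i) you do not need Lemma~\ref{lem:sim:V-sym-Vt-psd}. The matrix $C = V^{+}A(V^{+})\tran$, with $V^{+} = (V\tran V)^{-1}V\tran$, is already PSD as a congruence of $A \succeq 0$.
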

\begin{proof}
    Let $V = Q R$ be the thin QR factorization of $V$, where $Q \in \Real{N \times r}$ has orthonormal columns and $R \in \Real{r \times r}$ is invertible. Then $X = V \Lambda V\tran = Q (R \Lambda R\tran) Q\tran$. Since $\Lambda \in \pd{r}$ and $R$ is invertible, we have $R \Lambda R\tran \in \pd{r}$, and hence $X \in \ri(Q \psd{r} Q\tran)$. By~\cite[Corollary 6.1]{lewis98laa-eigenvalue-constrained-faces}, it follows that $\minface(X, \psd{N}) = Q \psd{r} Q\tran$. It remains to show that $Q \psd{r} Q\tran = V \psd{r} V\tran$. For any $W \in \psd{r}$, $Q W Q\tran = V \bigl(R^{-1} W (R\tran)^{-1}\bigr) V\tran$, and $R^{-1} W (R\tran)^{-1} \in \psd{r}$. Hence $Q \psd{r} Q\tran \subseteq V \psd{r} V\tran$. Conversely, for any $Y \in \psd{r}$, $V Y V\tran = Q (R Y R\tran) Q\tran$, and $R Y R\tran \in \psd{r}$. Therefore $V \psd{r} V\tran \subseteq Q \psd{r} Q\tran$. Thus $Q \psd{r} Q\tran = V \psd{r} V\tran$, and the result follows.
\end{proof}

Now we are ready to prove Theorem~\ref{thm:sim:minface-condition}. 
\begin{proof}[Proof of Theorem~\ref{thm:sim:minface-condition}]
    
    We prove the theorem in five steps. First, we apply Theorem~\ref{thm:sim:calK-dim} with \(\calV=\SymTenProd{\Real{n+1}}{d}\) and \(\kappa=2\), obtaining a generic avoidance statement for the mixed tensors \(v_i\vee v_j\), \(i<j\). Second, we pull this statement back to the affine parameters \((w_i,z_i)\) via the polynomial map \((w_i,z_i)\mapsto w_i\tilde z_i^{\otimes d}\), where \(\tilde z_i=[1;z_i]\). Third, we define \(\iota:\SymTenProd{\Real{n+1}}{2d}\to\SymTenProd{\SymTenProd{\Real{n+1}}{d}}{2}\) and set \(\calK=\im(\iota)\). Its image contains the diagonal terms, while the mixed terms generically avoid it; hence the intersection of the full pairwise span with \(\calK\) is exactly the diagonal span. The injectivity of \(\iota\) gives the stated bound on \(s\). Fourth, we identify \(\SymTenProd{\SymTenProd{\Real{n+1}}{d}}{2}\) with a symmetric matrix space, under which \(\calK\) becomes the generalized multivariate Hankel subspace and the diagonal terms become \(m_d(z_i)m_d(z_i)\tran\). Finally, after proving generic linear independence of the moment vectors \(m_d(z_i)\), we intersect with the PSD cone to obtain the claimed simplicial minimal face.

    \textbf{Step 1.} Fix two positive integers $n$ and $d$. 
    From Theorem~\ref{thm:sim:calK-dim}, take $\calV = \SymTenProd{\Real{n+1}}{d}$, $\kappa = 2$, and $s = r$. Take $\calX_1 = \cdots = \calX_s = \SymRankOne$ as defined in~\eqref{eq:pre:sym-prod-tensor}. The assumptions of Theorem~\ref{thm:sim:calK-dim} are all satisfied, since $\SymRankOne$ is non-degenerate of order $1 = \kappa - 1$. Therefore,~\eqref{eq:sim:calK-dim} can be simplified as 
    \begin{align}
        \label{eq:sim:minface-proof-dimK}
        \codim(\calK) \ge s \binom{n+d}{d},
    \end{align}
    with $\calK \subseteq \SymTenProd{\SymTenProd{\Real{n+1}}{d}}{2}$,
    and~\eqref{eq:sim:linspace-variety-intersection} can be simplified as
    \begin{align}
        \label{eq:sim:minface-proof-intersection} 
        \linspan \left\{ 
            v_i \stimes v_j \mymid i < j, i, j \in [s]
         \right\} \cap \calK = \{0\}
    \end{align} 
    with $\{v_i\}_{i=1}^s$ generically chosen from $(\SymRankOne)^{\times s} \subseteq (\SymTenProd{\Real{n+1}}{d})^{\times s}$.

    \textbf{Step 2.}
    Let $\{e_i\}_{i=1}^{n+1}$ be the standard basis of $\Real{n+1}$, $\{e_\alpha\}_{\alpha \in \MultiIndex{n+1}{d}}$ be the standard basis of $\Real{\binom{n+d}{d}}$, and $\{e_\gamma\}_{\gamma \in \MultiIndex{n+1}{2d}}$ be the standard basis of $\Real{\binom{n+2d}{2d}}$. Define
    $\{b_{\alpha} := \binom{d}{\alpha} e_1^{\stimes \alpha_1} \stimes \cdots \stimes e_{n+1}^{\stimes \alpha_{n+1}} \mid \alpha \in \MultiIndex{n+1}{d}\}$
    as a basis of $\SymTenProd{\Real{n+1}}{d}$. By Lemma~\ref{lem:sim:expansion-rank1-symtensor}, for any $u \in \Real{n+1}$,
    $u^{\otimes d} = \sum_{\alpha \in \MultiIndex{n+1}{d}} u^{\alpha} b_{\alpha}$.
    Similarly, define
    $\{c_{\gamma} := \binom{2d}{\gamma} e_1^{\stimes \gamma_1} \stimes \cdots \stimes e_{n+1}^{\stimes \gamma_{n+1}} \mid \gamma \in \MultiIndex{n+1}{2d}\}$
    as a basis of $\SymTenProd{\Real{n+1}}{2d}$, so that
    $u^{\otimes 2d} = \sum_{\gamma \in \MultiIndex{n+1}{2d}} u^\gamma c_\gamma$.

    Construct $f: (\reals \times \Real{n})^{\times s} \to (\SymTenProd{\Real{n+1}}{d})^{\times s}$ as the map defined by
    $f(\{w_i, z_i\}_{i=1}^{s}) = \{w_i \tz_i^{\otimes d}\}_{i=1}^{s}$ for all $\{w_i, z_i\}_{i=1}^{s} \in (\reals \times \Real{n})^{\times s}$, where $\tz := [1; z] \in \Real{n+1}$. Under the basis $\{b_\alpha\}_{\alpha \in \MultiIndex{n+1}{d}}$, $w \tz^{\otimes d} = w \sum_{\alpha \in \MultiIndex{n+1}{d}} \tz^\alpha b_\alpha$. By the fact that the class of polynomial maps between finite-dimensional vector spaces is closed under taking products, $f$ is a polynomial map. Since $w \tz^{\otimes d} \in \SymRankOne$ for every $w \in \reals$ and $z \in \Real{n}$, we have $f((\reals \times \Real{n})^{\times s}) \subseteq (\SymRankOne)^{\times s}$.
    We next show that $f((\reals \times \Real{n})^{\times s})$ is Zariski dense in $(\SymRankOne)^{\times s}$.

    Take any $v = \{w_i u_i^{\otimes d}\}_{i=1}^{s} \in (\SymRankOne)^{\times s}$, where $w_i \in \reals$ and $u_i \in \Real{n+1}$. After reordering the indices if necessary, we may assume that there exists $k \in \{0,1,\dots,s\}$ such that $u_{i,1} = 0$ for $i = 1,\dots,k$, while $u_{i,1} \ne 0$ for $i = k+1,\dots,s$. For each $t > 0$, define $v(t) := \{g_i(t)\}_{i=1}^{s}$ by
    \begin{equation}
        g_i(t) := \begin{cases}
            w_i [t; u_{i,2}; \dots; u_{i,n+1}]^{\otimes d} = \displaystyle w_i t^d \left[1; \frac{u_{i,2}}{t}; \dots; \frac{u_{i,n+1}}{t}\right]^{\otimes d} &  \text{if } i = 1,\dots,k, \\
            w_i [u_{i,1}; u_{i,2}; \dots; u_{i,n+1}]^{\otimes d} = \displaystyle w_i u_{i,1}^d \left[1; \frac{u_{i,2}}{u_{i,1}}; \dots; \frac{u_{i,n+1}}{u_{i,1}}\right]^{\otimes d} &  \text{if } i = k+1,\dots,s.
        \end{cases}
    \end{equation}
    For every $t > 0$, we have $v(t) \in f((\reals \times \Real{n})^{\times s})$. Moreover, $g_i(t) \to w_i u_i^{\otimes d}$ as $t \downarrow 0$ for each $i \in [s]$, and hence $v(t) \to v$. Therefore,
    $\clE(f((\reals \times \Real{n})^{\times s})) = (\SymRankOne)^{\times s}$.
    Since $\clE(f((\reals \times \Real{n})^{\times s})) \subseteq \clZ(f((\reals \times \Real{n})^{\times s}))$ and $(\SymRankOne)^{\times s}$ is Zariski closed, it follows that
    $\clZ(f((\reals \times \Real{n})^{\times s})) = (\SymRankOne)^{\times s}$.
    Hence by Lemma~\ref{lem:sim:generic-polymap} and~\eqref{eq:sim:minface-proof-intersection}, the following condition holds for generically chosen $\{w_i, z_i\}_{i=1}^{s} \in (\reals \times \Real{n})^{\times s}$ for $s$ satisfying~\eqref{eq:sim:minface-proof-dimK}:
    \begin{align}
        \label{eq:sim:minface-proof-ploymap}
        \linspan \left\{ 
            (w_i \tz_i^{\otimes d}) \stimes (w_j \tz_j^{\otimes d}) \mymid i < j, i, j \in [s]
         \right\} \cap \calK = \{0\}.
    \end{align} 
    
    \textbf{Step 3.}
    Let $\iota: \SymTenProd{\Real{n+1}}{2d} \to \SymTenProd{\SymTenProd{\Real{n+1}}{d}}{2}$ be the linear map defined by
    \begin{align}
        \label{eq:sim:minface-proof-iota}
        \iota(c_\gamma) = \sum_{\substack{
            \alpha + \beta = \gamma, \alpha \le \beta \\
            \alpha, \beta \in \MultiIndex{n+1}{d}
        }} (2 - \delta_{\alpha, \beta}) b_{\alpha} \stimes b_{\beta}, \quad \forall \gamma \in \MultiIndex{n+1}{2d},
    \end{align}
    where $\delta_{\alpha, \beta} = 1$ if $\alpha = \beta$ and $0$ otherwise. We choose $\calK$ in~\eqref{eq:sim:minface-proof-ploymap} to be $\im(\iota)$.

    We first show that $\iota$ is injective. Suppose $p, q \in \SymTenProd{\Real{n+1}}{2d}$ satisfy $\iota(p) = \iota(q)$. Writing $p = \sum_{\gamma} p_\gamma c_\gamma$ and $q = \sum_{\gamma} q_\gamma c_\gamma$, we have
    \begin{align*}
        \sum_{\gamma \in \MultiIndex{n+1}{2d}} \sum_{\substack{
            \alpha + \beta = \gamma, \alpha \le \beta \\
            \alpha, \beta \in \MultiIndex{n+1}{d}
        }} (2 - \delta_{\alpha, \beta}) (p_\gamma - q_\gamma) b_{\alpha} \stimes b_{\beta}
        = \sum_{\substack{
            \alpha \le \beta \\
            \alpha, \beta \in \MultiIndex{n+1}{d}
        }} (2 - \delta_{\alpha, \beta}) (p_{\alpha+\beta} - q_{\alpha+\beta}) b_{\alpha} \stimes b_{\beta} = 0.
    \end{align*}
    Since $\{b_\alpha \stimes b_\beta \mid \alpha \le \beta\}$ is a basis of $\SymTenProd{\SymTenProd{\Real{n+1}}{d}}{2}$, it follows that $p_{\alpha+\beta} = q_{\alpha+\beta}$ for all $\alpha, \beta$, and hence $p = q$. Therefore, $\iota$ is injective, so
    $\dim(\im(\iota)) = \dim(\SymTenProd{\Real{n+1}}{2d}) = \binom{n+2d}{2d}$.
    Consequently,~\eqref{eq:sim:minface-proof-dimK} is satisfied whenever
    \begin{align}
        \label{eq:sim:minface-proof-s}
        s \le \binom{n+d}{d}^{-1} \left\{ \dim(\SymTenProd{\SymTenProd{\Real{n+1}}{d}}{2}) - \dim(\im(\iota)) \right\}
        = \frac{1}{2} \left( \binom{n+d}{d} + 1 \right) - \binom{n+d}{d}^{-1} \binom{n+2d}{2d}.
    \end{align}

    Next, for any $w \in \reals$ and $u \in \Real{n+1}$, we have
    \begin{align*} 
        & \iota( w^2 u^{\otimes 2d} ) = w^2 \sum_{\gamma \in \MultiIndex{n+1}{2d}} u^\gamma \iota(c_\gamma) 
        = w^2 \sum_{\gamma \in \MultiIndex{n+1}{2d}} \sum_{\substack{ \alpha + \beta = \gamma, \alpha \le \beta \\ \alpha, \beta \in \MultiIndex{n+1}{d} }} u^\gamma (2 - \delta_{\alpha, \beta}) b_{\alpha} \stimes b_{\beta} \\ 
        = & w^2 \sum_{\alpha \in \MultiIndex{n+1}{d}} u^{2\alpha} b_\alpha \stimes b_\alpha + 2 w^2 \sum_{\substack{\alpha < \beta \\ \alpha, \beta \in \MultiIndex{n+1}{d}}} u^{\alpha+\beta} b_\alpha \stimes b_\beta 
        = w^2 \left(\sum_{\alpha \in \MultiIndex{n+1}{d}} u^\alpha b_\alpha \right) \stimes \left(\sum_{\beta \in \MultiIndex{n+1}{d}} u^\beta b_\beta \right) 
        = (w u^{\otimes d}) \stimes (w u^{\otimes d}). 
    \end{align*}
    Hence
    $\linspan(\{ (w_i \tz_i^{\otimes d}) \stimes (w_i \tz_i^{\otimes d}) \}_{i=1}^s) \subseteq \im(\iota)$.
    Combining this with Lemma~\ref{lem:sim:intersection-subspaces} and~\eqref{eq:sim:minface-proof-intersection}, we conclude that
    \begin{align}
        \label{eq:sim:minface-proof-leq}
        \linspan \left( \left\{
            (w_i \tz_i^{\otimes d}) \stimes (w_j \tz_j^{\otimes d}) \mymid i \le j, i, j \in [s]
        \right\} \right) \cap \im(\iota) 
        = \linspan \left( \left\{ (w_i \tz_i^{\otimes d}) \stimes (w_i \tz_i^{\otimes d}) \mymid i \in [s] \right\} \right)
    \end{align}
    for generically chosen $\{w_i, z_i\}_{i=1}^{s} \in (\reals \times \Real{n})^{\times s}$ with $s$ satisfying~\eqref{eq:sim:minface-proof-s}.

    \textbf{Step 4.} Since $\SymTenProd{\SymTenProd{\Real{n+1}}{d}}{2} \cong \sym{\binom{n+d}{d}}$, we now map every object in~\eqref{eq:sim:minface-proof-leq} back to $\sym{\binom{n+d}{d}}$. Define an isomorphism $\phi$ between $\SymTenProd{\Real{n+1}}{d}$ and $\Real{\binom{n+d}{d}}$ by $\phi(b_\alpha) = e_\alpha$ for all $\alpha \in \MultiIndex{n+1}{d}$. Hence $\phi(u^{\otimes d}) = \sum_{\alpha \in \MultiIndex{n+1}{d}} u^\alpha e_\alpha = v_d(u)$ for any $u \in \Real{n+1}$.
    By Lemma~\ref{lem:sim:iso-sym-square}, we can construct an isomorphism $\psi$ from $\SymTenProd{\SymTenProd{\Real{n+1}}{d}}{2}$ to $\sym{\binom{n+d}{d}}$ by
    \begin{align}
        \label{eq:sim:minface-proof-psi}
        \psi(b_\alpha \stimes b_\beta) = \frac{1}{2} \phi(b_\alpha) \phi(b_\beta)\tran + \frac{1}{2} \phi(b_\beta) \phi(b_\alpha)\tran = \frac{1}{2}(e_\alpha e_\beta\tran + e_\beta e_\alpha\tran ) = \frac{1}{2} (E_{\alpha, \beta} + E_{\beta, \alpha})
    \end{align}
    for all $\alpha \le \beta, \alpha, \beta \in \MultiIndex{n+1}{d}$, where $E_{\alpha, \beta} \in \sym{\binom{n+d}{d}}$ denotes the matrix with a ``$1$'' in the $(\alpha, \beta)$-th entry and ``$0$'' in all other entries.

    It follows from~\eqref{eq:sim:minface-proof-iota} and~\eqref{eq:sim:minface-proof-psi} that, for any $p = \sum_{\gamma \in \MultiIndex{n+1}{2d}} p_{\gamma} c_\gamma \in \SymTenProd{\Real{n+1}}{2d}$, $\psi(\iota(p))$ has the following form:
    \begin{align*}
        \sum_{\gamma \in \MultiIndex{n+1}{2d}} \sum_{\substack{
            \alpha + \beta = \gamma, \alpha \le \beta \\
            \alpha, \beta \in \MultiIndex{n+1}{d}
        }} (2 - \delta_{\alpha, \beta}) p_\gamma \psi(b_{\alpha} \stimes b_{\beta})
        = \sum_{\substack{
            \alpha \le \beta \\
            \alpha, \beta \in \MultiIndex{n+1}{d}
        }} \frac{2 - \delta_{\alpha, \beta}}{2} p_{\alpha + \beta} (E_{\alpha, \beta} + E_{\beta, \alpha})
        = \sum_{\alpha, \beta \in \MultiIndex{n+1}{d}} p_{\alpha + \beta} E_{\alpha, \beta}.
    \end{align*}
    Therefore, $\psi(\im(\iota)) = \left\{\sum_{\alpha, \beta \in \MultiIndex{n+1}{d}} y_{\alpha + \beta} E_{\alpha, \beta} \mid y \in \Real{\binom{n+2d}{2d}} \right\}$. Recall the definition of the set of generalized multivariate Hankel matrices $\Hankel{n+1,d} \subseteq \sym{\binom{n+d}{d}}$ in~\eqref{eq:pre:hankel}. For any $X \in \psi(\im(\iota))$, we have $X_{\alpha, \beta} = X_{\alpha^\prime, \beta^\prime} = y_{\alpha+\beta}$ whenever $\alpha + \beta = \alpha^\prime + \beta^\prime$. Hence $\psi(\im(\iota)) \subseteq \Hankel{n+1,d}$. Conversely, suppose that $X \in \Hankel{n+1,d}$. By setting $y_{\alpha+\beta} = X_{\alpha, \beta}$, we obtain $X = \sum_{\alpha,\beta} y_{\alpha+\beta} E_{\alpha, \beta} \in \psi(\im(\iota))$. Hence $\psi(\im(\iota)) = \Hankel{n+1,d}$.
    
    For $(w_i \tz_i^{\otimes d}) \stimes (w_j \tz_j^{\otimes d})$, $\forall i \le j, i, j \in [s]$, 
    \begin{align*}
        & \psi((w_i \tz_i^{\otimes d}) \stimes (w_j \tz_j^{\otimes d})) 
        = \frac{1}{2} w_i w_j \left\{ 
            \phi(\tz_i^{\otimes d}) \phi(\tz_j^{\otimes d})\tran + \phi(\tz_j^{\otimes d}) \phi(\tz_i^{\otimes d})\tran
         \right\} \\
        = & \frac{1}{2} w_i w_j \left\{ v_d(\tz_i) v_d(\tz_j)\tran + v_d(\tz_j) v_d(\tz_i)\tran \right\}
        = \frac{1}{2} w_i w_j \left\{ m_d(z_i) m_d(z_j)\tran + m_d(z_j) m_d(z_i)\tran \right\}  .
    \end{align*}
    Therefore, $\psi(\linspan ( \{ (w_i \tz_i^{\otimes d}) \stimes (w_i \tz_i^{\otimes d}) \mid i \in [s] \} )) = \left\{\sum_{i = 1}^s c_i w_i^2 m_d(z_i) m_d(z_i)\tran \mid c_i \in \reals, \forall i \in [s] \right\}$, and 
    \begin{align}
        \label{eq:sim:minface-proof-VdWC}
        & \psi\left(\linspan \left( \left\{
            (w_i \tz_i^{\otimes d}) \stimes (w_j \tz_j^{\otimes d}) \mymid i \le j, i, j \in [s]
        \right\} \right)\right) \nonumber \\
        = & \left\{ 
            \sum_{1 \le i \le j \le s} \frac{1}{2} w_i w_j c_{i,j} \left( 
                m_d(z_i) m_d(z_j)\tran + m_d(z_j) m_d(z_i)\tran 
             \right) \mymid c_{i,j} \in \reals, \forall 1 \le i \le j \le s
         \right\} \nonumber \\
        = & \left\{ 
            V_d(Z) \left(\frac{1}{2}W \circ C \right) V_d(Z)\tran \mymid C \in \sym{s}
         \right\},
    \end{align}
    where, in the last equality, $Z := [z_1, \dots, z_s] \in \Real{n \times s}$, $V_d(Z) := [m_d(z_1), \dots, m_d(z_s)] \in \Real{\binom{n+d}{d} \times s}$, and $W \in \sym{s}$ satisfies $W_{i,j} = w_i w_j$ for all $i, j \in [s]$. Additionally, we note that for generically chosen $\{(w_i, z_i)\}_{i=1}^s \in (\reals \times \Real{n})^{\times s}$, the following condition holds:
    \begin{align}
        \label{eq:sim:minface-proof-nonzero}
        w_i \ne 0, \quad \forall i \in [s].
    \end{align}
    Indeed, the failure set is $\{\{(w_i, z_i)\}_{i=1}^s \mid \Pi_{i=1}^s w_i = 0\}$, which is a proper sub-variety of $(\reals \times \Real{n})^{\times s}$. Under~\eqref{eq:sim:minface-proof-nonzero}, $\frac{1}{2}W$ in~\eqref{eq:sim:minface-proof-VdWC} is nonzero entry-wise and can therefore be absorbed into $C$:
    $
        \psi(\linspan ( \{
            (w_i \tz_i^{\otimes d}) \stimes (w_j \tz_j^{\otimes d}) \mid i \le j, i, j \in [s]
        \} )) = \{ 
            V_d(Z) C V_d(Z)\tran \mid C \in \sym{s}
         \} = V_d(Z) \sym{r} V_d(Z)\tran
    $.
    Similarly,
    $
        \psi(\linspan ( \{ (w_i \tz_i^{\otimes d}) \stimes (w_i \tz_i^{\otimes d}) \mid i \in [s] \} ))
        = \linspan (\{m_d(z_i) m_d(z_i)\tran\}_{i=1}^s).
    $
    These results, together with~\eqref{eq:sim:minface-proof-leq}, \eqref{eq:sim:minface-proof-nonzero}, Lemma~\ref{lem:sim:generic-conjuction} and $(\reals \times \Real{n})^{\times s}$'s irreducibility under Zariski topology, yield that
    \begin{align}
        \label{eq:sim:minface-proof-symsubspace}
        V_d(Z) \sym{s} V_d(Z)\tran \cap \Hankel{n+1, d} = \linspan (\{m_d(z_i) m_d(z_i)\tran\}_{i=1}^s)
    \end{align}
    holds for generic $\{w_i, z_i\}_{i=1}^s \in (\reals \times \Real{n})^{\times s}$ as long as $s$ satisfies~\eqref{eq:sim:minface-proof-s}.

    \textbf{Step 5.} 
    Next we show that for generically chosen $\{w_i, z_i\}_{i=1}^s \in (\reals \times \Real{n})^{\times s}$,
    \begin{align}
        \label{eq:sim:minface-proof-VdZ}
        \rank{V_d(Z)} = \rank{[m_d(z_1), \dots, m_d(z_s)]} = s
    \end{align}
    holds as long as $s \le \binom{n+d}{d}$. Since $V_d(Z)$ depends only on $Z := [z_1, \dots, z_s]$, it suffices to consider generic $Z \in \Real{n \times s}$. The set $\{Z \in \Real{n \times s} \mid \rank{V_d(Z)} < s\}$ is a variety, because $\rank{V_d(Z)} < s$ iff all $s \times s$ minors of $V_d(Z)$ vanish. Thus it remains to show that this variety is proper. Suppose $\linspan(\{m_d(z) \mid z \in \Real{n}\}) \subsetneq \Real{\binom{n+d}{d}}$. Then there exists $c \ne 0 \in \Real{\binom{n+d}{d}}$ such that $p(z) := c\tran m_d(z) = 0$ for all $z \in \Real{n}$. Writing $p(z) = \sum_{i=0}^d q_i(z_1, \dots, z_{n-1}) z_n^i$, we see that $q_i(z_1, \dots, z_{n-1}) = 0$ for all $i = 0, \dots, d$ and all $(z_1, \dots, z_{n-1}) \in \Real{n-1}$, so each $q_i$ is identically zero. Hence $p$ is the zero polynomial, which implies $c = 0$, a contradiction. Therefore, $\linspan(\{m_d(z) \mid z \in \Real{n}\}) = \Real{\binom{n+d}{d}}$, so there exist $\binom{n+d}{d}$ points whose moment vectors span $\Real{\binom{n+d}{d}}$. In particular, for every $s \le \binom{n+d}{d}$, there exists $Z$ such that $\rank{V_d(Z)} = s$. Hence $\{Z \in \Real{n \times s} \mid \rank{V_d(Z)} < s\}$ is a proper variety, and~\eqref{eq:sim:minface-proof-VdZ} holds generically.

    Choose $s$ satisfying~\eqref{eq:sim:minface-proof-s} so that~\eqref{eq:sim:minface-proof-symsubspace} holds. Then $s \le \binom{n+d}{d}$, so condition~\eqref{eq:sim:minface-proof-VdZ} is automatically satisfied. By Lemma~\ref{lem:sim:generic-conjuction}, for generically chosen $\{w_i, z_i\}_{i=1}^s \in (\reals \times \Real{n})^{\times s}$, conditions~\eqref{eq:sim:minface-proof-symsubspace},~\eqref{eq:sim:minface-proof-VdZ}, and~\eqref{eq:sim:minface-proof-nonzero} hold simultaneously.
    Intersect both sides of~\eqref{eq:sim:minface-proof-symsubspace} with $\psd{\binom{n+d}{d}}$. For the left-hand side, since $V_d(Z)$ has full column rank by~\eqref{eq:sim:minface-proof-VdZ}, Lemma~\ref{lem:sim:V-sym-Vt-psd} gives $V_d(Z) \sym{s} V_d(Z)\tran \cap \psd{\binom{n+d}{d}} = V_d(Z) \psd{s} V_d(Z)\tran$. Moreover,
    $
        \sum_{i=1}^s w_i^2 m_d(z_i) m_d(z_i)\tran = V_d(Z) \diag{\{w_i^2\}_{i=1}^s} V_d(Z)\tran
    $
    with $\diag{\{w_i^2\}_{i=1}^s} \in \pd{s}$ by~\eqref{eq:sim:minface-proof-nonzero}. Hence Lemma~\ref{lem:sim:minimal-faces-psd-cone} implies
    \begin{align}
        \label{eq:sim:minface-proof-LHS}
        V_d(Z) \sym{s} V_d(Z)\tran \cap \Hankel{n+1, d} \cap \psd{\binom{n+d}{d}} 
        = \minface\left( \sum_{i=1}^s w_i^2 m_d(z_i) m_d(z_i)\tran, \psd{\binom{n+d}{d}} \right) \cap \Hankel{n+1, d}.
    \end{align}

    For the right-hand side of \eqref{eq:sim:minface-proof-symsubspace}, since $m_d(z_i) m_d(z_i)\tran \in \psd{\binom{n+d}{d}}$ for all $i \in [s]$, we have
    $
        \cone (\{m_d(z_i) m_d(z_i)\tran\}_{i=1}^s) \subseteq \linspan (\{m_d(z_i) m_d(z_i)\tran\}_{i=1}^s) \cap \psd{\binom{n+d}{d}}.
    $
    Conversely, suppose $X = \sum_{i=1}^s c_i m_d(z_i) m_d(z_i)\tran = V_d(Z) \diag{\{c_i\}_{i=1}^s} V_d(Z)\tran$ lies in $\psd{\binom{n+d}{d}}$. Since $V_d(Z)$ has full column rank generically, for any $y \in \Real{s}$ there exists $x \in \Real{\binom{n+d}{d}}$ such that $y = V_d(Z)\tran x$. Therefore,
    $
        y\tran \diag{\{c_i\}_{i=1}^s} y
        = x\tran V_d(Z) \diag{\{c_i\}_{i=1}^s} V_d(Z)\tran x
        = x\tran X x \ge 0,
    $
    and hence $\diag{\{c_i\}_{i=1}^s} \in \psd{s}$. This implies $c_i \ge 0$ for all $i \in [s]$. Thus, generically,
    \begin{align}
        \label{eq:sim:minface-proof-RHS}
        \cone (\{m_d(z_i) m_d(z_i)\tran\}_{i=1}^s) = \linspan (\{m_d(z_i) m_d(z_i)\tran\}_{i=1}^s) \cap \psd{\binom{n+d}{d}}.
    \end{align}

    Combining~\eqref{eq:sim:minface-proof-symsubspace},~\eqref{eq:sim:minface-proof-LHS}, and~\eqref{eq:sim:minface-proof-RHS}, we obtain
    \begin{align*}
        \minface\left( \sum_{i=1}^s w_i^2 m_d(z_i) m_d(z_i)\tran, \psd{\binom{n+d}{d}} \right) \cap \Hankel{n+1, d}
        = \cone \left(\left\{m_d(z_i) m_d(z_i)\tran\right\}_{i=1}^s\right).
    \end{align*}
    Since the left-hand side is a face of $\MomConePs{n+1,2d} = \psd{\binom{n+d}{d}} \cap \Hankel{n+1,d}$ containing $\sum_{i=1}^s w_i^2 m_d(z_i) m_d(z_i)\tran$, and this matrix lies in its relative interior because all coefficients $w_i^2$ are strictly positive, it follows that
    \begin{align*}
        \minface\left( \sum_{i=1}^s w_i^2 m_d(z_i) m_d(z_i)\tran, \MomConePs{n+1,2d} \right)
        = \cone \left(\left\{m_d(z_i) m_d(z_i)\tran\right\}_{i=1}^s\right).
    \end{align*}
    This proves~\eqref{eq:sim:minface-condition}. The linear independence of $\{m_d(z_i) m_d(z_i)\tran\}_{i=1}^s$ comes from the linear independence of $\{m_d(z_i)\}_{i=1}^s$ in~\eqref{eq:sim:minface-proof-VdZ}. 

    It remains to show that the upper bound of $s$ in~\eqref{eq:sim:minface-s} grows as $\Theta(n^d)$ for fixed $d \ge 2$. When $d = 1$, it is easy to verify that the upper bound is $0$. When $d \ge 2$, however, since
    $
        \binom{n+d}{d} = \frac{1}{d!} \prod_{k=1}^d (n+k) = \frac{n^d}{d!} + O(n^{d-1})
    $
    and
    $
        \binom{n+d}{d}^{-1} \binom{n+2d}{2d}
        = \frac{d!}{(2d)!} \prod_{k=1}^d (n+d+k)
        = \frac{d!}{(2d)!} n^d + O(n^{d-1}),
    $
    the right-hand-side of~\eqref{eq:sim:minface-s} is 
    $
        \left( \frac{1}{2d!} - \frac{d!}{(2d)!} \right) n^d + O(n^{d-1}).
    $
    Moreover,
    $
        \frac{1}{2d!} - \frac{d!}{(2d)!}
        = \frac{(2d)! - 2(d!)^2}{2d!(2d)!} > 0,
    $
    because
    $
        \binom{2d}{d} = \frac{(2d)!}{(d!)^2} > 2
    $
    for all $d \ge 2$. Therefore, for fixed $d \ge 2$, the upper bound in~\eqref{eq:sim:minface-s} indeed grows as $\Theta(n^d)$.
\end{proof}

Theorem~\ref{thm:sim:minface-condition} should be viewed as a local facial statement rather than a global description of \( \MomConePs{n+1,2d} \). For fixed \( d \ge 2 \) and \( s \) in the regime~\eqref{eq:sim:minface-s}, it shows that a moment matrix formed by \( s \) generically chosen weighted atoms lies on a simplicial minimal face generated by the planted rank-one atomic rays, so that the cone locally behaves like \( \mathbb{R}_+^s \). In this sense, although the pseudo-moment cone may be globally complicated, it \emph{regularizes} around such matrices to a cone with transparent facial structure. This does not contradict the known existence of higher-rank extreme rays in \( \SosCone{n+1,2d}^* \), since those results concern the ambient cone globally, whereas Theorem~\ref{thm:sim:minface-condition} concerns only the minimal face containing moment matrices of this particular form. This local simplicial structure is exactly the geometric mechanism that later enables the extreme-ray decomposition procedure to recover the planted atoms and weights uniquely up to reordering, as we will discuss in \S\ref{sec:ray:unique}.


\section{Carath\'eodory-Type Atomic Decomposition of Moment Matrices}
\label{sec:ray}

In this section, we study the properties of Algorithm~\ref{alg:intro:extremeray-decomp} in detail. In \S\ref{sec:ray:exact}, we prove that, under exact arithmetic, Algorithm~\ref{alg:intro:extremeray-decomp} almost surely produces an extreme-ray decomposition for any given pair $(\calC, X)$. In \S\ref{sec:ray:unique}, we further show that, when $\calC = \MomConePs{n+1,2d}$ and $X = \sum_{i=1}^s w_i^2 m_d(z_i) m_d(z_i)\tran$, Algorithm~\ref{alg:intro:extremeray-decomp} can almost surely recover, up to a proper reordering, the atoms $\{z_i\}_{i=1}^s$ and weights $\{w_i^2\}_{i=1}^s$, provided that the conditions of Theorem~\ref{thm:sim:minface-condition} hold. 
In \S\ref{sec:ray:finite}, we introduce several techniques to stabilize Algorithm~\ref{alg:intro:extremeray-decomp} under finite-precision arithmetic, leading to the practical Algorithm~\ref{alg:ray:extremeray-decomp-robust}. Finally, in \S\ref{sec:ray:complexity}, we estimate the time and memory complexity of Algorithm~\ref{alg:ray:extremeray-decomp-robust} for the case $\calC = \MomConePs{n+1,2d}$. 

\subsection{Correctness of Algorithm~\ref{alg:intro:extremeray-decomp} under Exact Arithmetic}
\label{sec:ray:exact}

We first provide some elementary lemmas on convex geometry. 
\begin{lemma}[Random linear functional exposes an extreme point]
    \label{lem:ray:extreme-point}
    Let $\calK \subseteq \Real{N}$ be a nonempty compact convex set. For each $c \in \Real{N}$, define $\calE(c) := \argmin_{x \in \calK} \inprod{c}{x}$. Then there exists a Lebesgue-null set $\calN \subseteq \Real{N}$ such that, for every $c \in \Real{N} \setminus \calN$, the set $\calE(c)$ is a singleton, and its unique element is an extreme point of $\calK$.
\end{lemma}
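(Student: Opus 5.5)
The plan has two parts: show that a generic $c$ has a unique minimizer over $\calK$, and then show that this unique minimizer must be an extreme point. The natural tool for uniqueness is the support function $h(c) := \max_{x \in \calK} \inprod{c}{x}$, and I would work with $g(c) := -h(-c) = \min_{x \in \calK} \inprod{c}{x}$. Since $\calK$ is nonempty and compact, $h$ is finite on all of $\Real{N}$. It is convex, being a pointwise maximum of linear functions, and positively homogeneous. The key fact is that the subdifferential of a support function is the set of maximizers, $\partial h(c) = \argmax_{x \in \calK} \inprod{c}{x}$. This is standard (e.g.\ Danskin's theorem, or Rockafellar's Corollary 23.5.3), because $h$ is the conjugate of the indicator of the closed convex set $\calK$. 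Applying this at $-c$ gives $\calE(c) = \partial h(-c)$.

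The first key step uses Rademacher's theorem, or more elementarily the theorem that a finite convex function on $\Real{N}$ is differentiable almost everywhere (Rockafellar, Theorem 25.5). Let $\calN_0$ be the set where $h$ is not differentiable, so $\calN_0$ is Lebesgue-null. Set $\calN := -\calN_0$, which is also null because reflection preserves Lebesgue measure. For $c \notin \calN$, the function $h$ is differentiable at $-c$. A convex function is differentiable at a point exactly when its subdifferential there is a singleton, so $\calE(c) = \partial h(-c) = \{\nabla h(-c)\}$ is a singleton.

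The second step shows that this unique minimizer $x^\star$ is extreme. Suppose $x^\star = \lambda y + (1-\lambda) y'$ with $y, y' \in \calK$ and $\lambda \in (0,1)$. Since $\inprod{c}{y} \ge \inprod{c}{x^\star}$ and $\inprod{c}{y'} \ge \inprod{c}{x^\star}$, while their convex combination equals $\inprod{c}{x^\star}$, both inequalities must be equalities. So $y, y' \in \calE(c) = \{x^\star\}$, which forces $y = y' = x^\star$, and hence $x^\star$ is extreme. More generally, $\calE(c)$ is always a face of $\calK$, and a singleton face is an extreme point.

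The only nontrivial ingredient is the almost-everywhere differentiability of convex functions together with the identification of the subdifferential with the argmax set. I would cite both as standard results rather than reprove them. Measurability of $\calN$ is not an issue: the non-differentiability set of a convex function is Borel, and in any case it suffices that $\calN$ be contained in a null set.
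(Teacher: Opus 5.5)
Your proposal is correct and follows essentially the same route as the paper. The paper also writes $\calE(c)$ as the supporting face of $\calK$ in direction $-c$ and gets almost-everywhere differentiability of the support function from Lipschitz continuity plus Rademacher's theorem, with Schneider's characterization (differentiable at $u$ iff the supporting face is a singleton) in place of your Rockafellar citations; it then sets $\calN := -\calN'$ and proves extremality by the same equality-forcing argument, using midpoints where you use general $\lambda \in (0,1)$.
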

\begin{proof}
    Define the support function $h_{\calK}(u) := \max_{x \in \calK} \inprod{u}{x}$ and the supporting face $\calF(\calK, u) := \{x \in \calK \mid \inprod{u}{x} = h_{\calK}(u)\}$, so that $\calE(c) = \calF(\calK, -c)$ for all $c \in \Real{N}$. Since $\calK$ is compact, there exists $R > 0$ such that $\normtwo{x} \le R$ for all $x \in \calK$, and hence $\abs{h_{\calK}(u) - h_{\calK}(v)} \le \max_{x \in \calK} \abs{\inprod{u-v}{x}} \le R \normtwo{u-v}$ for all $u, v \in \Real{N}$. Thus $h_{\calK}$ is Lipschitz, so by Rademacher's theorem it is differentiable almost everywhere. By~\cite[Theorem 1.7.4 and Corollary 1.7.3]{schneider13book-convex-bodies}, there exists a Lebesgue-null set $\calN' \subseteq \Real{N}$ such that, for every $u \in \Real{N} \setminus \calN'$, the face $\calF(\calK, u)$ is a singleton, say $\calF(\calK, u) = \{x_\star(u)\}$. We claim that $x_\star(u)$ is an extreme point of $\calK$. Indeed, if $x_\star(u) = \frac{1}{2}(y+z)$ for some $y, z \in \calK$, then maximality of $x_\star(u)$ gives $\inprod{u}{x_\star(u)} \ge \inprod{u}{y}$ and $\inprod{u}{x_\star(u)} \ge \inprod{u}{z}$, while linearity gives $\inprod{u}{x_\star(u)} = \frac{1}{2}(\inprod{u}{y} + \inprod{u}{z})$. Hence $\inprod{u}{y} = \inprod{u}{z} = h_{\calK}(u)$, so $y, z \in \calF(\calK, u)$, and therefore $y = z = x_\star(u)$. Finally, letting $\calN := -\calN'$, which is again Lebesgue-null, we obtain that for every $c \in \Real{N} \setminus \calN$, the set $\calE(c) = \calF(\calK, -c)$ is a singleton whose unique element is an extreme point of $\calK$.
\end{proof}

\begin{lemma}[Faces of a spectrahedral cone]
    \label{lem:ray:face}
    Let $\calC = \calL \cap \psd{N}$ be a nonempty spectrahedral cone, where $\calL \subseteq \sym{N}$ is a linear subspace. Then every nonempty face $\calF$ of $\calC$ is again a spectrahedral cone.
\end{lemma}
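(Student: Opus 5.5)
The plan is to show that a nonempty face $\calF$ of $\calC = \calL \cap \psd{N}$ can be written as $\calF = \calL' \cap \psd{N}$ for some linear subspace $\calL' \subseteq \sym{N}$. The approach goes through the minimal face of a relative-interior point and then uses the known description of faces of the PSD cone.

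First, I pick $X \in \ri(\calF)$, which is possible since $\calF$ is nonempty and convex. Then $\calF = \minface(X, \calC)$, because a face of a convex set is the minimal face of any of its relative-interior points. Let $\calF_{\psd{}} := \minface(X, \psd{N})$.

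Next, I write $X = V \Lambda V\tran$, where $V \in \Real{N \times r}$ has full column rank and $\Lambda \in \pd{r}$; the trivial case $X = 0$ gives $\calF = \{0\} = \{0\} \cap \psd{N}$. Lemma~\ref{lem:sim:minimal-faces-psd-cone} then gives $\calF_{\psd{}} = V \psd{r} V\tran$. By Lemma~\ref{lem:sim:V-sym-Vt-psd}, this equals $V \sym{r} V\tran \cap \psd{N}$, which is a spectrahedral cone cut out by the subspace $V \sym{r} V\tran$.

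The key claim is that
\begin{align*}
\minface(X, \calL \cap \psd{N}) = \calL \cap \minface(X, \psd{N}).
\end{align*}

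For the inclusion ``$\supseteq$'': the set $\calL \cap \calF_{\psd{}}$ is a face of $\calC$, because a face of $\psd{N}$ intersected with $\calC$ is a face of $\calC$. It also contains $X$, so it contains the minimal face $\calF$.

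For the inclusion ``$\subseteq$'': take $Y \in \calL \cap \calF_{\psd{}}$. Since $X \in \ri(\calF_{\psd{}})$, there is $\epsilon > 0$ with $X - \epsilon(Y - X) \in \calF_{\psd{}} \subseteq \psd{N}$. This point lies in $\calL$ because $X, Y \in \calL$, so it belongs to $\calC$. Hence $X$ lies in the open segment between $Y$ and a point of $\calC$. The face $\calF$ contains $X$, so by the defining property of faces it contains both endpoints, and in particular $Y \in \calF$.

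Combining the steps gives
\begin{align*}
\calF = \bigl(\calL \cap V \sym{r} V\tran\bigr) \cap \psd{N},
\end{align*}
which is a spectrahedral cone with subspace $\calL' = \calL \cap V \sym{r} V\tran$.

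The main point needing care is the ``$\subseteq$'' inclusion, which requires checking that $X \in \ri(\calF_{\psd{}})$; this holds because $\Lambda \succ 0$, as used in the proof of Lemma~\ref{lem:sim:minimal-faces-psd-cone}. The rest is bookkeeping.
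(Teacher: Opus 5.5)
Your proof is correct and follows the paper's route. You take $X \in \ri(\calF)$, identify $\calF = \minface(X,\calC) = \calL \cap \minface(X,\psd{N})$, and use $\minface(X,\psd{N}) = V\psd{r}V\tran$. You differ in two small ways: you prove the intersection identity directly where the paper cites a handbook theorem, and you rewrite $V\psd{r}V\tran = V\sym{r}V\tran \cap \psd{N}$ via Lemma~\ref{lem:sim:V-sym-Vt-psd}, which the paper leaves implicit when it asserts that $\calL \cap Q\psd{r}Q\tran$ is spectrahedral.

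One cosmetic slip: your labels ``$\supseteq$'' and ``$\subseteq$'' are swapped relative to the arguments under them, though both inclusions are correctly established.
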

\begin{proof}
    Pick $X \in \ri(\calF)$. Since $\calF$ is the minimal face of $\calC$ containing $X$, we have $\calF = \minface(X,\calC) = \minface(X,\psd{N}) \cap \calL$ by~\cite[Theorem 3.3.1(2)(b)]{wolkowicz12book-handbook}. Moreover,  $\minface(X,\psd{N}) = Q\psd{r}Q\tran$ for $Q \in \Real{N \times r}$ with orthogonal columns and some $r$. Hence $\calF = \calL \cap Q\psd{r}Q\tran$, which is again a spectrahedral cone.
\end{proof}

\begin{lemma}[Extreme points and extreme rays]
    \label{lem:ray:point-ray}
    Let $\calC \subseteq \psd{N}$ be a nonempty spectrahedral cone, and define $\calB := \calC \cap \{Y \in \sym{N} \mid \trace{Y} = 1\}$. Then $X \in \calB$ is an extreme point of $\calB$ iff $\ray X$ is an extreme ray of $\calC$.
\end{lemma}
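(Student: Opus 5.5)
The plan is to prove both directions directly from the definitions of extreme point and extreme ray. The key tool is that the trace is a strictly positive linear functional on $\calC \setminus \{0\}$. Indeed, since $\calC \subseteq \psd{N}$, every $Y \in \calC$ satisfies $\trace{Y} \ge 0$, with equality only if $Y = 0$. So every nonzero element of $\calC$ can be rescaled uniquely into $\calB$. Recall that $\ray X$ is an extreme ray of $\calC$ iff $X \ne 0$ and, whenever $X = Y + Z$ with $Y, Z \in \calC$, both $Y$ and $Z$ lie in $\ray X$.

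For the direction ``extreme point $\Rightarrow$ extreme ray'', let $X \in \calB$ be an extreme point. Then $X \ne 0$, because $\trace{X} = 1$. Suppose $X = Y + Z$ with $Y, Z \in \calC$. Taking traces gives $\trace{Y} + \trace{Z} = 1$. If $Y = 0$ or $Z = 0$, the other summand equals $X$ and we are done. Otherwise set $\lambda := \trace{Y} \in (0,1)$, so that $\trace{Z} = 1 - \lambda$, and write
\[
X = \lambda (Y/\lambda) + (1-\lambda)\bigl(Z/(1-\lambda)\bigr),
\]
where both normalized matrices lie in $\calB$ because $\calC$ is a cone. Extremality of $X$ in $\calB$ forces $Y/\lambda = Z/(1-\lambda) = X$. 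Hence $Y = \lambda X$ and $Z = (1-\lambda) X$ both lie in $\ray X$.

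For the converse, suppose $\ray X$ is an extreme ray of $\calC$ with $X \in \calB$, and let $X = \lambda Y + (1-\lambda) Z$ with $Y, Z \in \calB$ and $\lambda \in (0,1)$. Then $\lambda Y$ and $(1-\lambda) Z$ lie in $\calC$ and sum to $X$, so by extremality of the ray, $\lambda Y = a X$ for some $a \ge 0$. Taking traces gives $\lambda = a$, so $Y = X$; likewise $Z = X$. Hence $X$ is an extreme point of $\calB$.

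No step is a real obstacle. The only point needing care is the strict positivity of the trace on nonzero PSD matrices, which guarantees both the normalization and that the case $\trace{Y} = 0$ forces $Y = 0$. Compactness of $\calB$ and Lemma~\ref{lem:ray:face} are not needed for this lemma.
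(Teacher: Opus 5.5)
Your proposal is correct and follows essentially the same argument as the paper: positivity of the trace on nonzero elements of $\calC$ lets you normalize a decomposition $X = Y + Z$ into a convex combination of points of $\calB$, and conversely extremality of the ray together with $\trace{Y} = \trace{Z} = 1$ forces $Y = Z = X$. Your explicit treatment of the case $Y = 0$ or $Z = 0$ is a small detail that the paper leaves implicit.
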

\begin{proof}
    Since every nonzero element of $\calC$ is PSD, it has positive trace. Thus every nonzero ray of $\calC$ intersects $\calB$ in exactly one point. Suppose first that $X$ is an extreme point of $\calB$, and let $X = Y + Z$ with $Y, Z \in \calC$. If both $Y$ and $Z$ are nonzero, then $1 = \trace{X} = \trace{Y} + \trace{Z}$, and so $X = \trace{Y}\frac{Y}{\trace{Y}} + \trace{Z}\frac{Z}{\trace{Z}}$, where $\frac{Y}{\trace{Y}}, \frac{Z}{\trace{Z}} \in \calB$. Since $X$ is an extreme point of $\calB$, both normalized matrices must equal $X$. Hence $Y, Z \in \ray X$, and therefore $\ray X$ is an extreme ray of $\calC$. Conversely, suppose that $\ray X$ is an extreme ray of $\calC$, and let $X = \lambda Y + (1-\lambda)Z$ with $Y, Z \in \calB$ and $0 < \lambda < 1$. Then $\lambda Y, (1-\lambda)Z \in \calC$. By extremality of $\ray X$, we have $Y, Z \in \ray X$. Since $\trace{Y} = \trace{Z} = \trace{X} = 1$, it follows that $Y = Z = X$. Thus $X$ is an extreme point of $\calB$.
\end{proof}

Now we are ready to show that, for a general spectrahedral cone $\calC$, Algorithm~\ref{alg:intro:extremeray-decomp} produces, with probability $1$, an extreme-ray decomposition of any nonzero $X \in \calC$. 
\begin{theorem}[Algorithm~\ref{alg:intro:extremeray-decomp} generates extreme rays]
    \label{thm:ray:correctness}
    Let $\calC \subseteq \sym{N}$ be a nonempty spectrahedral cone, and let $X \in \calC$ be nonzero. Suppose that, at every iteration,~\eqref{eq:intro:Mk} and~\eqref{eq:intro:tk} can be solved exactly. Then every step of Algorithm~\ref{alg:intro:extremeray-decomp} is well defined, and Algorithm~\ref{alg:intro:extremeray-decomp} terminates after finitely many steps. Moreover, with probability $1$, its output $\{t_k, M_k\}_{k=1}^r$ is an extreme ray decomposition of $X$ in $\calC$, \ie $X = \sum_{k=1}^r t_k M_k$, where $t_k > 0$ and $\ray M_k$ is an extreme ray of $\calC$ for every $k \in [r]$.
\end{theorem}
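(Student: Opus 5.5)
The plan is an induction on the dimension of the minimal face $\calF_k$, showing it strictly decreases at every iteration; this gives both termination and the extreme-ray property of each $M_k$.

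\textbf{Well-definedness.} Assume $X_k \ne 0$ and $X_k \in \calC$. By Lemma~\ref{lem:ray:face}, $\calF_k = \minface(X_k,\calC)$ is a nonempty spectrahedral cone contained in $\psd{N}$. The slice $\calB_k := \calF_k \cap \{\trace{Y}=1\}$ has the following properties:
\begin{itemize}
\item it is nonempty, since it contains $X_k/\trace{X_k}$;
\item it is closed and convex;
\item it is bounded, because PSD matrices with unit trace satisfy $\normF{Y}\le 1$.
\end{itemize}
So $\calB_k$ is compact. Identify $\sym{N}$ with $\Real{N(N+1)/2}$ via $\svec$; the trace inner product is preserved. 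Then Lemma~\ref{lem:ray:extreme-point} applies: outside a Lebesgue-null set of $B_k$, the minimizer $M_k$ in~\eqref{eq:intro:Mk} is unique and is an extreme point of $\calB_k$. Since $B_k$ has an absolutely continuous law, this holds with probability $1$.

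\textbf{$M_k$ spans an extreme ray of $\calC$.} By Lemma~\ref{lem:ray:point-ray} applied to $\calF_k$, the ray $\ray M_k$ is an extreme ray of $\calF_k$. A face of a face is a face, so $\ray M_k$ is also an extreme ray of $\calC$.

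\textbf{Step size $t_k$.} I will show $0<t_k<\infty$ and $X_{k+1}\in\calF_k$.
\begin{itemize}
\item $t_k>0$: $X_k\in\ri(\calF_k)$ and $M_k\in\calF_k$, so $X_k - tM_k\in\calF_k$ for small $t>0$.
\item $t_k<\infty$: $\trace{X_k - tM_k} = \trace{X_k}-t$, which becomes negative for large $t$, and $\calF_k\subseteq\psd{N}$.
\item $X_{k+1}\in\calF_k$: $\calF_k$ is closed, so the supremum is attained.
\end{itemize}

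\textbf{Dimension drop (main obstacle).} I expect this step to be the crux. The claim is that $X_{k+1}\notin\ri(\calF_k)$, so that either $X_{k+1}=0$ or $\dim\calF_{k+1}<\dim\calF_k$.

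Suppose instead $X_{k+1}\in\ri(\calF_k)$. Then $X_{k+1}-\epsilon M_k \in \calF_k$ for small $\epsilon>0$: in the relative interior one may move in any direction of $\linspan(\calF_k)$, and $M_k\in\calF_k$ lies in that span. Hence $X_k - (t_k+\epsilon)M_k \in \calF_k$, contradicting the maximality of $t_k$.

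Therefore $X_{k+1}$ lies in the relative boundary of $\calF_k$. So $\calF_{k+1}=\minface(X_{k+1},\calC)$, which is contained in $\calF_k$, is a proper face of $\calF_k$, and its dimension is strictly smaller.

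One edge case needs separate handling: $\dim\calF_k = 1$. Then $\calF_k$ is the ray $\ray M_k$ itself, $X_k = t_kM_k$, and $X_{k+1}=0$.

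\textbf{Termination and decomposition.} Dimensions start at most $N(N+1)/2$ and strictly decrease, so the loop stops after at most $N(N+1)/2$ iterations with $X_{r+1}=0$. Telescoping $X_{k+1}=X_k-t_kM_k$ gives $X=\sum_{k=1}^r t_kM_k$ with every $t_k>0$.

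The probability-one statement comes from intersecting finitely many probability-one events, one per iteration. Each is conditional on the previous choices; since $B_k$ is independent of the past, the conditional probability is $1$. A union bound over at most $N(N+1)/2$ null events finishes the argument.
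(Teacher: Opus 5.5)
Your proposal is correct and follows essentially the same route as the paper. Extremality comes from compactness of the unit-trace slice of $\calF_k$, Lemmas~\ref{lem:ray:extreme-point} and~\ref{lem:ray:point-ray}, and the face-of-a-face argument; $0<t_k<\infty$ comes from the relative-interior and trace bounds; and the maximality-of-$t_k$ contradiction shows $X_{k+1}\notin\ri(\calF_k)$, forcing a strict drop in face dimension. Your separate treatment of $\dim\calF_k=1$ is unnecessary, since the general dimension-drop argument already covers it, and your conditional-probability remark is slightly more careful than the paper's appeal to intersecting finitely many probability-one events.
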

\begin{proof}
    (\romannumeral1) We first show that in each step $k$,~\eqref{eq:intro:Mk} is well-defined and $\ray M_k$ is an extreme ray of $\calC$ almost surely. Since $\calC$ is a nonempty spectrahedral cone, the minimal face $\calF_k$ defined in step $k$ (Algorithm~\ref{alg:intro:extremeray-decomp}, Line~\ref{alg:intro:line:Fk}) is a nonempty face of $\calC$. Hence $\calF_k$ is also a spectrahedral cone by Lemma~\ref{lem:ray:face}, and in particular $\calF_k \subseteq \calC \subseteq \psd{N}$. Therefore $\calG_k := \calF_k \cap \{Y \in \sym{N} \mid \trace{Y} = 1\}$ is a nonempty compact convex set. Indeed, since $X_k \in \calF_k \setminus \{0\}$ and $X_k \in \psd{N}$, we have $\trace{X_k} > 0$, so $X_k/\trace{X_k} \in \calG_k$. Thus the minimum in~\eqref{eq:intro:Mk} is attained. 
    
    Since $B_k$ in step $k$ (Algorithm~\ref{alg:intro:extremeray-decomp}, Line~\ref{alg:intro:line:Bk}) is drawn from an absolutely continuous distribution with respect to Lebesgue measure on $\sym{N}$, Lemma~\ref{lem:ray:extreme-point} implies that minimizing the random functional $f(Y) := \inprod{B_k}{Y}$ over $\calG_k$ almost surely returns an extreme point of $\calG_k$. By Lemma~\ref{lem:ray:point-ray}, $M_k$ from~\eqref{eq:intro:Mk} therefore determines an extreme ray $\ray M_k$ of $\calF_k$ almost surely. Now suppose $Y, Z \in \calC$ satisfy $Y + Z = M_k$. Since $M_k \in \calF_k$ and $\calF_k$ is a face of $\calC$, it follows that $Y, Z \in \calF_k$. By extremality of $\ray M_k$ in $\calF_k$, we obtain $Y, Z \in \ray M_k$. Hence $\ray M_k$ is also an extreme ray of $\calC$.

    (\romannumeral2) We next show that $t_k > 0$ in~\eqref{eq:intro:tk} exists and is unique. Define $I_k := \{t \ge 0 \mid X_k - t M_k \in \calF_k\}$. Since $X_k \in \ri(\calF_k)$ and $M_k \in \calF_k$, there exists $\epsilon > 0$ such that $X_k - t M_k \in \calF_k$ for all $t \in [0, \epsilon]$. Hence $[0, \epsilon] \subseteq I_k$. Since the map $t \mapsto X_k - t M_k$ is continuous and $\calF_k$ is closed, the set $I_k$ is closed. Since this map is affine and $\calF_k$ is convex, the set $I_k$ is convex in $\reals$. Moreover, if $t \in I_k$, then $X_k - t M_k \in \calF_k \subseteq \psd{N}$, so $0 \le \trace{X_k - t M_k} = \trace{X_k} - t$, because $\trace{M_k} = 1$. Thus $t \le \trace{X_k}$. Therefore $I_k$ is a nonempty closed convex subset of $\reals$ that is bounded above, so $I_k = [0, t_k]$ for some unique $t_k > 0$.

    (\romannumeral3) Finally, we show finite termination. By construction, $X_{k+1} = X_k - t_k M_k \in \calF_k$. If $X_{k+1} \ne 0$, then $X_{k+1}$ cannot lie in $\ri(\calF_k)$. Otherwise, since $M_k \in \calF_k$, there would exist $\delta > 0$ such that $X_{k+1} - \delta M_k \in \calF_k$. This would imply $X_k - (t_k + \delta)M_k \in \calF_k$, contradicting the definition of $t_k$. Hence $X_{k+1}$ lies in the relative boundary of $\calF_k$ whenever $X_{k+1} \ne 0$. Therefore $\calF_{k+1} = \minface(X_{k+1}, \calC) \subsetneq \calF_k$, and so $\dim(\calF_{k+1}) < \dim(\calF_k)$. Since face dimensions are nonnegative integers, the iteration sequence terminates after finitely many steps. 
    
    Suppose the total iteration number is $r$. For each $k \in [r]$, we obtain a weight $t_k > 0$ and a matrix $M_k$ such that $\ray M_k$ is an extreme ray of $\calC$ almost surely. Summing the identities $X_{k+1} = X_k - t_k M_k$ from $k = 1$ to $r$ gives $X = \sum_{k=1}^r t_k M_k$. Since the algorithm terminates after finitely many steps, the intersection of the corresponding finitely many probability-$1$ events still has probability $1$. Therefore, with probability $1$, the output $\{t_k, M_k\}_{k=1}^r$ forms an extreme ray decomposition of $X$, \ie $X = \sum_{k=1}^r t_k M_k$ with $t_k > 0$ and $\ray M_k$ an extreme ray of $\calC$ for all $k \in [r]$.
\end{proof}

\subsection{Simplicial Regularizability Enforces Uniqueness of Decomposition}
\label{sec:ray:unique}

As one may have already noticed, in~\eqref{eq:intro:Mk}, different choices of the linear functional $B_k$ may produce different extreme rays $M_k$ even when $\calF_k$ is fixed. In other words, although Algorithm~\ref{alg:intro:extremeray-decomp} almost surely returns an extreme ray decomposition of $X \in \calC$, the decomposition need not be unique. For the same input $X$, even the number of iterations $r$ may vary, let alone the collection $\{t_k, M_k\}_{k=1}^r$. For example, if $\calC = \psd{N}$ and $X \in \pd{N}$, then $\minface(X,\calC) = \psd{N}$, and minimizing a random linear functional may attain any rank-$1$ extreme ray of $\psd{N}$, \ie $\ray(xx\tran)$ for arbitrary nonzero $x \in \reals^n$.

In the special case where $X = \sum_{i=1}^s w_i^2 m_d(z_i)m_d(z_i)\tran$ and $\calC = \MomConePs{n+1,2d}$, however, the situation is markedly different. Indeed, as long as $s$ satisfies~\eqref{eq:sim:minface-s} and $\{w_i, z_i\}_{i=1}^s$ is chosen generically from $(\reals \times \Real{n})^s$, the minimal face $\minface(X,\calC)$ becomes a simplicial cone from Theorem~\ref{thm:sim:minface-condition}. The following lemma shows that, with probability $1$, minimizing a random linear functional over a simplicial cone can only select its linearly independent generators.
\begin{lemma}[Random linear functional selects a generator of a simplicial cone]
    \label{lem:ray:simplicial-hit}
    Let $\{ H_i \}_{i=1}^s \in \psd{N}$ be linearly independent. Define
    $\calC := \cone(\{ H_i \}_{i=1}^s)$ and $\calG := \calC \cap \{X \in \sym{N} \mid \trace{X} = 1\}$.
    Let $B \in \sym{N}$ be drawn from a distribution that is absolutely continuous with respect to Lebesgue measure on $\sym{N}$. Then, with probability $1$,
    $\argmin_{Y \in \calG} \inprod{B}{Y}$
    is a singleton of the form $\{H_i/\trace{H_i}\}$ for some $i \in [s]$.
\end{lemma}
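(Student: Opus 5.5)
Since the $H_i$ are PSD, linearly independent, and in particular nonzero, each $\trace{H_i} > 0$. Set $\hat H_i := H_i/\trace{H_i}$. We will first show that $\calG = \conv(\{\hat H_i\}_{i=1}^s)$ is a simplex whose vertices are exactly the $\hat H_i$.

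To see this, take $Y = \sum_i c_i H_i \in \calG$ with $c_i \ge 0$. Then $\sum_i c_i \trace{H_i} = 1$, so $Y = \sum_i \lambda_i \hat H_i$ with $\lambda_i := c_i \trace{H_i} \ge 0$ and $\sum_i \lambda_i = 1$. Conversely, every such convex combination lies in $\calG$. The $\hat H_i$ are linearly independent, so the barycentric coordinates $\lambda$ of each $Y \in \calG$ are unique, and the map $\lambda \mapsto \sum_i \lambda_i \hat H_i$ is an affine bijection from the standard simplex $\Delta^{s-1}$ onto $\calG$.

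Next we pull the objective back to $\Delta^{s-1}$. Writing $b_i := \inprod{B}{\hat H_i}$, we have $\inprod{B}{Y} = \sum_i \lambda_i b_i$. A linear function on the standard simplex is minimized exactly on the face spanned by the vertices $e_i$ with $i \in \argmin_j b_j$. Hence the argmin over $\calG$ is the singleton $\{\hat H_{i^\star}\}$ whenever the minimum of $(b_1, \dots, b_s)$ is attained at a unique index $i^\star$. By the bijection, this is equivalent to the argmin being a singleton of the required form.

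It remains to show that ties occur with probability zero. For $i \ne j$, the event $b_i = b_j$ is the event $\inprod{B}{\hat H_i - \hat H_j} = 0$. Linear independence gives $\hat H_i - \hat H_j \ne 0$, so this event says $B$ lies in a hyperplane of $\sym{N}$, which is a Lebesgue-null set. A finite union of these null sets is null, and since the distribution of $B$ is absolutely continuous, ties have probability $0$. This step is the only potential obstacle, and it is mild. Alternatively, one could invoke Lemma~\ref{lem:ray:extreme-point} on the compact convex set $\calG$ together with the observation that the extreme points of a simplex are its vertices. The direct hyperplane argument is cleaner, and we will use it.
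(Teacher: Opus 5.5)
Your proof is correct. The first half, normalizing to $\hat H_i$ and identifying $\calG$ with the simplex $\conv(\{\hat H_i\}_{i=1}^s)$, matches the paper's argument.

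You diverge at the probabilistic step. The paper notes that the $\hat H_i$ are precisely the extreme points of $\calG$ and then invokes Lemma~\ref{lem:ray:extreme-point}. That lemma's proof goes through Lipschitz continuity of the support function, Rademacher's theorem, and Schneider's characterization of when a supporting face is a singleton.

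You instead pull the objective back to the standard simplex. There you show the exceptional set of $B$ lies in the explicit finite union of hyperplanes $\{B \in \sym{N} \mid \inprod{B}{\hat H_i - \hat H_j} = 0\}$, $i \ne j$. Each is proper because $\hat H_i \ne \hat H_j$.

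Your route is elementary and self-contained, and it makes the null set explicit. The paper's route is shorter on the page because it reuses a general lemma it needs anyway for Theorem~\ref{thm:ray:correctness}. There the feasible set is a general compact spectrahedral slice, not a simplex, so the vertex-and-hyperplane argument does not apply directly.
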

\begin{proof}
    Since $\{ H_i \}_{i=1}^s$ are linearly independent, each $H_i$ is nonzero. As $H_i \in \psd{N}$, this implies $\trace{H_i} > 0$ for all $i \in [s]$. Now let $Y \in \calG$. Then $Y \in \cone(\{ H_i \}_{i=1}^s)$, so $Y = \sum_{i=1}^s \lambda_i H_i$ for some $\lambda_i \ge 0$. Since $\trace{Y} = 1$, we also have $\sum_{i=1}^s \lambda_i \trace{H_i} = 1$. Writing $\mu_i := \lambda_i \trace{H_i}$, we obtain $\mu_i \ge 0$, $\sum_{i=1}^s \mu_i = 1$, and
    $Y = \sum_{i=1}^s \mu_i \frac{H_i}{\trace{H_i}}$.
    Hence $\calG = \conv(\{H_i/\trace{H_i}\}_{i=1}^s)$. Since positive rescaling preserves linear independence, the matrices $\{H_i/\trace{H_i}\}_{i=1}^s$ are linearly independent, and therefore they are precisely the extreme points of $\calG$. The conclusion now follows from Lemma~\ref{lem:ray:extreme-point}.
\end{proof}

Now we combine Theorem~\ref{thm:sim:minface-condition} and Theorem~\ref{thm:ray:correctness} together.
\begin{theorem}[Atomic decomposition via extreme ray decomposition]
    \label{thm:ray:atomic-decomp}
    Fix $d \ge 2$. Given $n \ge 1$, let $\MomConePs{n+1,2d}$ be defined in~\eqref{eq:pre:moment-cone-ps}. Suppose $s = O(n^d)$ satisfies~\eqref{eq:sim:minface-s}. For generically chosen $\{(w_i, z_i)\}_{i=1}^s \in (\reals \times \Real{n})^{\times s}$, let $X = \sum_{i=1}^s w_i^2 m_d(z_i)m_d(z_i)\tran$ and $\calC = \MomConePs{n+1,2d}$. If~\eqref{eq:intro:Mk} and~\eqref{eq:intro:tk} can be solved in exact arithmetic, then Algorithm~\ref{alg:intro:extremeray-decomp} almost surely returns $\{t_k, M_k\}_{k=1}^r$ with (up to a proper reordering)
    \begin{align}
        \label{eq:ray:atomic-tk-Mk}
        r = s, \quad \text{and} \quad t_k = w_k^2 \normtwo{m_d(z_k)}^2, \quad M_k = \frac{1}{\normtwo{m_d(z_k)}^2} m_d(z_k) m_d(z_k)\tran, \ \forall k \in [s]. 
    \end{align}
\end{theorem}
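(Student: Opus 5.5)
The plan is an induction on the iteration index $k$, using the invariant that $X_k$ is a strictly positive combination of the atoms not yet removed. Let $H_i := m_d(z_i)m_d(z_i)\tran$. By Theorem~\ref{thm:sim:minface-condition}, for generic $\{(w_i,z_i)\}_{i=1}^s$ the $H_i$ are linearly independent, $w_i \ne 0$ for all $i$, and $\minface(X,\MomConePs{n+1,2d}) = \cone(\{H_i\}_{i=1}^s)$. The invariant I would maintain is: at the start of iteration $k$ there is an index set $S_k \subseteq [s]$ with $|S_k| = s-k+1$ such that
\[
X_k = \sum_{i \in S_k} w_i^2 H_i \quad\text{and}\quad \calF_k = \cone(\{H_i\}_{i \in S_k}).
\]
The base case $k=1$, $S_1=[s]$, is exactly Theorem~\ref{thm:sim:minface-condition}.

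For the inductive step, since $\calF_k$ is a simplicial cone with linearly independent PSD generators, Lemma~\ref{lem:ray:simplicial-hit} gives, with probability one, $M_k = H_{j}/\trace{H_j}$ for some $j \in S_k$. Note that $\trace{H_j} = \normtwo{m_d(z_j)}^2$. Next I compute $t_k$. Writing
\[
X_k - tM_k = \sum_{i\in S_k\setminus\{j\}} w_i^2 H_i + \bigl(w_j^2 - t/\trace{H_j}\bigr)H_j,
\]
linear independence of the $H_i$ implies that this lies in $\calF_k$ if and only if its coefficients are nonnegative. Hence
\[
t_k = w_j^2 \trace{H_j} = w_j^2\normtwo{m_d(z_j)}^2,
\]
and $X_{k+1} = \sum_{i\in S_k\setminus\{j\}} w_i^2 H_i$. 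Set $S_{k+1} := S_k\setminus\{j\}$.

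The remaining point, which I expect to be the only nontrivial step, is identifying $\calF_{k+1} = \minface(X_{k+1},\MomConePs{n+1,2d})$. A face of a face is a face, so $\cone(\{H_i\}_{i\in S_{k+1}})$, being a face of the simplicial cone $\calF_k$, is a face of $\MomConePs{n+1,2d}$. Since $X_{k+1}$ has all coefficients strictly positive, it lies in the relative interior of that face, so this face is the minimal face containing $X_{k+1}$. This avoids having to reapply the generic theorem to the subfamily. Alternatively, genericity of the full family could be used to restrict to subsets, but the face-of-face argument is cleaner.

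When $S_k$ becomes empty, $X_k = 0$ and the algorithm stops, which gives $r=s$. Each $j$ is selected exactly once, so reordering by the selection order yields~\eqref{eq:ray:atomic-tk-Mk}. Finally, there are finitely many iterations, each holding with probability one, so their intersection holds almost surely, as in the proof of Theorem~\ref{thm:ray:correctness}.
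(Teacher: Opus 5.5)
Your proof is correct, and it departs from the paper's at exactly the step you flagged: identifying $\calF_{k+1}$.

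\textbf{The paper's route.} It re-invokes Theorem~\ref{thm:sim:minface-condition} for the remaining atoms. It defines properties $P_k$ on the tail tuples $\{(w_i,z_i)\}_{i=k}^s$. It then uses Lemma~\ref{lem:sim:generic-polymap} with coordinate projections and Lemma~\ref{lem:sim:generic-conjuction} to make all of these hold simultaneously for generic parameters. Finally, at each iteration it applies the theorem to the current tail ``after a proper reordering.''

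\textbf{Your route.} Your face-of-a-face argument needs Theorem~\ref{thm:sim:minface-condition} only once, for the full family, plus linear independence. Subcones generated by subsets of the generators of the simplicial face $\calF_k$ are faces of $\MomConePs{n+1,2d}$. A strictly positive combination lies in the relative interior of its subcone, so that subcone is the minimal face.

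\textbf{What each buys.} Your argument is more economical, and also more robust. The atom removed at each step is selected by the random $B_k$, so the surviving index set is an arbitrary subset of $[s]$. It need not be a tail in the ordering fixed when the generic set was chosen. The paper's tail-wise genericity therefore implicitly has to be taken over all $2^s$ subfamilies. That is harmless by finite conjunction, but the reordering phrase glosses over it. Your argument has no such issue. It in fact shows that the simplicial minimal-face property for the full family passes automatically to every subfamily with the same parameters, which makes the projection/conjunction machinery unnecessary.

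\textbf{Routine details left implicit.} You should state that $X_k\neq 0$ while $S_k\neq\emptyset$, which follows from linear independence and positive coefficients, so the algorithm cannot stop early. You also use without proof the standard fact that a subset of the generators of a simplicial cone spans a face.
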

\begin{proof}
    For each $k \in [s]$, since $s-k+1 \le s$, condition~\eqref{eq:sim:minface-s} also holds with $s$ replaced by $s-k+1$. Let
    $
        \pi_k : (\reals \times \Real{n})^{\times s} \to (\reals \times \Real{n})^{\times (s-k+1)}
    $
    be the projection onto the last $s-k+1$ coordinates. Define a property $P_k$ on $(\reals \times \Real{n})^{\times (s-k+1)}$ by declaring that $P_k(\{(\widehat w_i,\widehat z_i)\}_{i=k}^s)$ holds if
    \begin{align*}
        \minface\left(\sum_{i=k}^s \widehat w_i^2 m_d(\widehat z_i)m_d(\widehat z_i)\tran, \MomConePs{n+1,2d}\right)
        = \cone\left(\left\{m_d(\widehat z_i)m_d(\widehat z_i)\tran\right\}_{i=k}^s\right),  
    \end{align*}
    and $\{m_d(\widehat z_i)m_d(\widehat z_i)\tran\}_{i=k}^s$ are linearly independent. By Theorem~\ref{thm:sim:minface-condition}, $P_k$ holds generically on $(\reals \times \Real{n})^{\times (s-k+1)}$. Since $\pi_k$ is a polynomial map and
    $
    \pi_k((\reals \times \Real{n})^{\times s}) = (\reals \times \Real{n})^{\times (s-k+1)},
    $
    Lemma~\ref{lem:sim:generic-polymap} implies that $P_k \circ \pi_k$ holds generically on $(\reals \times \Real{n})^{\times s}$. Since this is true for every $k \in [s]$, Lemma~\ref{lem:sim:generic-conjuction} shows that, for generically chosen $\{(w_i,z_i)\}_{i=1}^s$, the conclusion of Theorem~\ref{thm:sim:minface-condition} holds simultaneously for every tail tuple $\{(w_i,z_i)\}_{i=k}^s$, $k \in [s]$.

    At the first iteration of Algorithm~\ref{alg:intro:extremeray-decomp}, by Theorem~\ref{thm:sim:minface-condition},  $\calF_1 = \minface(X_1, \calC) =$ $\cone(\{m_d(z_i) m_d(z_i)\tran\}_{i=1}^s)$, with $\{m_d(z_i) m_d(z_i)\tran\}_{i=1}^s$ linearly independent and PSD. Invoking Lemma~\ref{lem:ray:simplicial-hit}, with probability $1$,
    $
        M_1 = \frac{1}{\trace{m_d(z_i) m_d(z_i)\tran}} m_d(z_i) m_d(z_i)\tran
        = \frac{1}{\normtwo{m_d(z_i)}^2} m_d(z_i) m_d(z_i)\tran
    $
    for some $i \in [s]$. After a proper reordering, we can choose $i = 1$.

    Next we calculate $t_1$ in~\eqref{eq:intro:tk}. Notice that $t$ must satisfy
    \begin{align*}
         X_1 - t M_1
        = & \sum_{i=2}^s w_i^2 m_d(z_i) m_d(z_i)\tran
        + \left( w_1^2 - \frac{t}{\normtwo{m_d(z_1)}^2} \right) m_d(z_1) m_d(z_1)\tran \\
        \in & \ \cone(\{m_d(z_i) m_d(z_i)\tran\}_{i=1}^s)
        = \left\{
            \sum_{i=1}^s c_i m_d(z_i) m_d(z_i)\tran \mymid c_i \ge 0, \forall i \in [s]
        \right\}.
    \end{align*}
    Hence by definition, $t_1 = w_1^2 \normtwo{m_d(z_1)}^2$. And $X_2 = \sum_{i=2}^s w_i^2 m_d(z_i) m_d(z_i)\tran$.

    Now suppose, after a proper reordering, that for some $k \ge 1$, $X_k = \sum_{i=k}^s w_i^2 m_d(z_i) m_d(z_i)\tran$. By the first paragraph of the proof and Theorem~\ref{thm:sim:minface-condition} applied to the tail tuple $\{(w_i,z_i)\}_{i=k}^s$, we get
    $
        \calF_k = \minface(X_k, \calC) = \cone(\{m_d(z_i) m_d(z_i)\tran\}_{i=k}^s),
    $
    where $\{m_d(z_i) m_d(z_i)\tran\}_{i=k}^s$ are linearly independent and PSD. Applying Lemma~\ref{lem:ray:simplicial-hit} again, with probability $1$,
    $
        M_k = \frac{1}{\normtwo{m_d(z_k)}^2} m_d(z_k) m_d(z_k)\tran
    $
    after a proper reordering of the remaining indices. Then, exactly as in the case $k = 1$, the scalar $t_k$ in~\eqref{eq:intro:tk} must satisfy
    \begin{align*}
        X_k - t M_k
        = \sum_{i=k+1}^s w_i^2 m_d(z_i) m_d(z_i)\tran
        + \left( w_k^2 - \frac{t}{\normtwo{m_d(z_k)}^2} \right) m_d(z_k) m_d(z_k)\tran 
        \in \cone(\{m_d(z_i) m_d(z_i)\tran\}_{i=k}^s).
    \end{align*}
    Therefore, $t_k = w_k^2 \normtwo{m_d(z_k)}^2$, and consequently
    $
        X_{k+1} = X_k - t_k M_k = \sum_{i=k+1}^s w_i^2 m_d(z_i) m_d(z_i)\tran.
    $

    Continuing inductively yields $M_k = \frac{1}{\normtwo{m_d(z_k)}^2} m_d(z_k) m_d(z_k)\tran$ and $t_k = w_k^2 \normtwo{m_d(z_k)}^2$ for all $k \in [s]$, up to a proper reordering. Moreover, $X_{s+1} = 0$, so the algorithm stops no later than the $s$-th iteration. On the other hand, for each $k \in [s]$, $X_k = \sum_{i=k}^s w_i^2 m_d(z_i) m_d(z_i)\tran \ne 0$, since all coefficients are strictly positive and the summands are PSD rank-one matrices. Hence the algorithm does not stop before iteration $s$, and thus $r = s$.
    Finally, there are only finitely many iterations, and at each iteration Lemma~\ref{lem:ray:simplicial-hit} succeeds with probability $1$. Therefore the intersection of these finitely many probability-$1$ events still has probability $1$. Hence Algorithm~\ref{alg:intro:extremeray-decomp} almost surely returns $\{t_k,M_k\}_{k=1}^r$ satisfying (up to a proper reordering)~\eqref{eq:ray:atomic-tk-Mk}.
\end{proof}

Theorem~\ref{thm:ray:atomic-decomp} yields a new atomic decomposition algorithm for moment matrices of the form $X = \sum_{i=1}^s w_i^2 m_d(z_i) m_d(z_i)\tran$. Specifically, when the weights and atoms are chosen generically and the number of atoms satisfies $s = O(n^d)$ for fixed $d \ge 2$, the recovery guarantee is given by the following corollary.
\begin{corollary}[Recovery of weights and atoms]
    \label{cor:ray:recover}
    Consider the setting of Theorem~\ref{thm:ray:atomic-decomp}. Suppose Algorithm~\ref{alg:intro:extremeray-decomp} returns $\{t_k, M_k\}_{k=1}^s$. Then, up to the same proper reordering as in Theorem~\ref{thm:ray:atomic-decomp}, for each $k \in [s]$ we can recover $w_k^2$ and $z_k$ as follows:
    \begin{enumerate}
        \item Compute the eigenvalue decomposition of $M_k$. Let $\lambda_k$ be its unique nonzero eigenvalue, and let $h_k$ be a corresponding unit eigenvector.
        \item Recover $m_d(z_k)$ by
        $
            m_d(z_k) = \frac{1}{h_{k,1}} h_k.
        $
        Then recover $z_k$ from the degree-one entries of $m_d(z_k)$.
        \item Recover $w_k^2$ by
        $
            w_k^2 = t_k h_{k,1}^2.
        $
    \end{enumerate}
\end{corollary}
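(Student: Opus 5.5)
We start from Theorem~\ref{thm:ray:atomic-decomp}: after the reordering given there, $M_k = m_d(z_k) m_d(z_k)\tran / \normtwo{m_d(z_k)}^2$ and $t_k = w_k^2 \normtwo{m_d(z_k)}^2$ for each $k \in [s]$. The corollary then reduces to linear algebra on a rank-one PSD matrix, together with the observation that the first coordinate of $m_d(z)$ is the constant monomial $1$. Under the graded lexicographic order on $\MultiIndexLeq{n}{d}$, the index $\alpha = 0$ comes first, so $m_d(z)_1 = z^0 = 1$ for every $z \in \Real{n}$.

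First, write $u_k := m_d(z_k)/\normtwo{m_d(z_k)}$. Then $M_k = u_k u_k\tran$ with $\normtwo{u_k} = 1$. Hence $M_k$ has rank one, its unique nonzero eigenvalue is $\lambda_k = \normtwo{u_k}^2 = 1$, and its eigenspace is $\linspan\{u_k\}$. Any unit eigenvector $h_k$ for $\lambda_k$ therefore satisfies $h_k = \sigma_k u_k$ for some sign $\sigma_k \in \{\pm 1\}$.

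Second, compute $h_{k,1} = \sigma_k / \normtwo{m_d(z_k)}$. This is nonzero because $m_d(z_k)_1 = 1$, so the division in step 2 is well defined. It gives
\[
h_k / h_{k,1} = \frac{\sigma_k m_d(z_k)/\normtwo{m_d(z_k)}}{\sigma_k/\normtwo{m_d(z_k)}} = m_d(z_k).
\]
The sign ambiguity cancels. The entries of $m_d(z_k)$ indexed by $\alpha = e_j$ with $\abs{\alpha} = 1$ are exactly $z_{k,j}$, which recovers $z_k$.

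Third, since $h_{k,1}^2 = 1/\normtwo{m_d(z_k)}^2$, we get $t_k h_{k,1}^2 = w_k^2 \normtwo{m_d(z_k)}^2 / \normtwo{m_d(z_k)}^2 = w_k^2$.

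There is no genuine obstacle. The only points needing care are that $h_{k,1} \neq 0$, which follows from the leading entry of $m_d(z_k)$ being $1$, and that the eigenvector is determined only up to sign, which the ratio in step 2 and the square in step 3 both remove.
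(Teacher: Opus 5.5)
Your proposal is correct and follows the paper's proof essentially line by line. Both arguments read off $M_k = m_d(z_k)m_d(z_k)\tran/\normtwo{m_d(z_k)}^2$ and $t_k = w_k^2\normtwo{m_d(z_k)}^2$ from Theorem~\ref{thm:ray:atomic-decomp}, note that $h_k = \pm m_d(z_k)/\normtwo{m_d(z_k)}$, and use that the leading entry of $m_d(z_k)$ is $1$ to get $h_{k,1} \neq 0$, $h_k/h_{k,1} = m_d(z_k)$, and $t_k h_{k,1}^2 = w_k^2$.
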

\begin{proof}
    By Theorem~\ref{thm:ray:atomic-decomp}, up to a proper reordering,
    $M_k = \frac{1}{\normtwo{m_d(z_k)}^2} m_d(z_k)m_d(z_k)\tran$ and $t_k = w_k^2 \normtwo{m_d(z_k)}^2$.
    Hence $M_k$ is rank-one PSD with unique nonzero eigenvalue $1$, and any corresponding unit eigenvector has the form $h_k = \pm \frac{m_d(z_k)}{\normtwo{m_d(z_k)}}$.
    Since the first entry of $m_d(z_k)$ equals $1$, we have $h_{k,1} = \pm \frac{1}{\normtwo{m_d(z_k)}} \ne 0$, 
    and therefore $\frac{1}{h_{k,1}} h_k = m_d(z_k)$. 
    Thus $z_k$ is recovered from the degree-one entries of $m_d(z_k)$. Finally,
    $
        w_k^2
        = \frac{t_k}{\normtwo{m_d(z_k)}^2}
        = t_k h_{k,1}^2.
    $
\end{proof}

We note that Algorithm~\ref{alg:intro:extremeray-decomp} and Corollary~\ref{cor:ray:recover} can recover asymptotically more atoms than direct flatness-based extraction from the given moment matrix~\cite{henrion05ppc-detecting,klep18siopt-minimizer-extraction-robust}. For fixed \( d \ge 2 \), the usual flatness test at degree \(2d\) can certify and extract at most \( \Theta(n^{d-1}) \) atoms, while providing an \emph{a priori} sufficient rank condition for existence and recovery. This limitation can be bypassed by hierarchy-based approaches that complete higher-order moments until a flat extension is found~\cite{nie14fcm-truncated-moment}. In contrast, our method decomposes the given degree-\(d\) moment matrix directly, using the simplicial regularizability of the pseudo-moment cone, and allows recovery of up to \( \Theta(n^d) \) atoms. The tradeoff is that our method does not currently come with an analogous \emph{a priori} sufficient certificate.


\subsection{Stabilizing Algorithm~\ref{alg:intro:extremeray-decomp} under Finite-Precision Arithmetic}
\label{sec:ray:finite}

Theorem~\ref{thm:ray:correctness}, Theorem~\ref{thm:ray:atomic-decomp}, and Corollary~\ref{cor:ray:recover} are all established under the assumption that~\eqref{eq:intro:Mk} and~\eqref{eq:intro:tk} can be solved in exact arithmetic. Unfortunately, this assumption is unrealistic in practice. In fact, a direct implementation of Algorithm~\ref{alg:intro:extremeray-decomp} typically becomes numerically unstable after only \(2\) or \(3\) iterations due to the accumulation of various numerical errors.

To address this issue, we develop several techniques to stabilize Algorithm~\ref{alg:intro:extremeray-decomp}, including (\romannumeral1) facial reduction to improve the stability of the SDP solved in~\eqref{eq:intro:Mk}; (\romannumeral2) a closed-form expression for \(t_k\) in~\eqref{eq:intro:tk}; (\romannumeral3) an alternating projection subroutine to improve feasibility; and (\romannumeral4) a restarting mechanism to automatically handle SDP failures and accumulated numerical error. Throughout this section, we represent the spectrahedral cone \(\calC \subseteq \sym{N}\) as \(\calC = \sym{N}_+ \cap \{ X \in \sym{N} \mid \calA(X) = 0 \}\), where the linear map \(\calA: \sym{N} \to \Real{m}\) is defined by \(\calA(X) = [\inprod{A_1}{X}, \dots, \inprod{A_m}{X}]^\top\) with \(A_i \in \sym{N}\) for each \(i \in [m]\). We denote by \(\vect(A_i)\) the vectorization of \(A_i\), and by \(\vect(\calA)\) the matrix \(\vect(\calA) := [\vect(A_1), \dots, \vect(A_m)]\). The resulting robust implementation is summarized in Algorithm~\ref{alg:ray:extremeray-decomp-robust}. We now describe these four stabilization techniques in detail.

\begin{algorithm}
    \caption{Robust Carath\'eodory-Type Extreme Ray Decomposition in Finite Precision \label{alg:ray:extremeray-decomp-robust}}
    \begin{algorithmic}[1]
        \Require linear map $\calA: \sym{N} \to \Real{m}$ representing a nonempty spectrahedral cone $\calC = \psd{N} \cap \{ X \mid \calA(X) = 0 \}$, initial point $X \ne 0 \in \calC$, tolerances $\epsrank, \epsbreak, \epscol, \epsalt$, iteration threshold $\Talt$
        \Ensure $\{t_k, M_k\}_{k=1}^r$, with $X = \sum_{k=1}^r t_k M_k$ and $\ray M_k$ numerically extreme in $\calC$

        \Procedure{RayDecompRestart}{$\calA, X, \epsrank, \epsbreak, \epscol, \epsalt, \Talt$}
            \State $\ell := 1$, $r := 0$, $X_1^{\mathrm{res}} := X$, $\epsrank^\circ := \epsrank$


            \State Let $\{\lambda_{i}\}_{i=1}^N$ be eigenvalues of $X$ and set $r_{\max} := \abs{\{ i \in [N] \mid \lambda_i > \epsrank^\circ \}}$ \Comment{compute numerical rank}
            \For{$\ell=1,\dots,r_{\max}$}
                \vspace{-2mm} \Statex \hdashrule{\linewidth}{0.4pt}{2pt 2pt}

                \State Compute $(s_\ell, \{\tau_{\ell,j}, N_{\ell,j}\}_{j=1}^{s_\ell}, X_{\ell+1}^{\mathrm{res}}) := \Call{RayDecompFR}{\calA, X_\ell^{\mathrm{res}}, \epsrank, \epsbreak, \epscol, \epsalt, \Talt}$
                \State Append $\{\tau_{\ell,j}, N_{\ell,j}\}_{j=1}^{s_\ell}$ to $\{t_k, M_k\}_{k=1}^r$, and set $r := r + s_\ell$

                \vspace{-2mm} \Statex \hdashrule{\linewidth}{0.4pt}{2pt 2pt}

                \State If $\normF{X_{\ell+1}^{\mathrm{res}}} < \epsbreak$, \Return $r$, $\{t_k, M_k\}_{k=1}^r$
                \Comment{succeed, early stop}
                \State Set $\epsrank := \epsrank/10$ if $s_\ell = 0$, and $\epsrank := \epsrank^\circ$ otherwise
                \Comment{adapt $\epsrank$}
                \label{alg:ray:line:adpt-epsrank}
                \State $X_{\ell+1}^{\mathrm{res}} := \Call{AlternatingProj}{X_{\ell+1}^{\mathrm{res}}, \calA, \Talt, \epsalt}$
                \Comment{\cf Algorithm~\ref{alg:ray:alternating-projection}}
                \label{alg:ray:line:reround}
                
                \vspace{-2mm} \Statex \hdashrule{\linewidth}{0.4pt}{2pt 2pt}
            \EndFor
            \State \Return $r$, $\{t_k, M_k\}_{k=1}^r$
        \EndProcedure

        \Statex

        \Procedure{RayDecompFR}{$\calA, X, \epsrank, \epsbreak, \epscol, \epsalt, \Talt$} \label{alg:ray:line:start-fr}
            \State $k := 1$, $X_1 := X$, $\calI_0 := [m]$
            \While{$\normF{X_k} \ge \epsbreak$}

                \vspace{-2mm} \Statex \hdashrule{\linewidth}{0.4pt}{2pt 2pt}

                \State Compute an eigenvalue decomposition $X_k = V_k D_k V_k\tran$ \Comment{compute numerical rank}
                \label{alg:ray:line:inner-start}

                \State Let $\lambda_{k,1} \ge \dots \ge \lambda_{k,N}$ denote $\diag{D_k}$. Set $r_k := \abs{\{ i \in [N] \mid \lambda_{k,i} > \epsrank \}}$, $Q_k := V_k(:,1:r_k)$

                \vspace{-2mm} \Statex \hdashrule{\linewidth}{0.4pt}{2pt 2pt}

                \State Set $\tX_k := Q_k\tran X_k Q_k$ \Comment{facial reduction} \label{alg:ray:line:facial-reduction-1}

                \State $\calJ_k := \Call{IndependentCols}{\vect(\{Q_k\tran A_i Q_k \}_{i \in \calI_{k-1}}),\epscol}$ \Comment{\cf Algorithm~\ref{alg:ray:remove-dependent-cols}}
                \label{alg:ray:line:facial-reduction-2}

                \State Set $\calI_k := \calI_{k-1}(\calJ_k)$ and define $\tcalA_k(Y) := (\inprod{Q_k\tran A_i Q_k}{Y})_{i \in \calI_k}$
                \State Let $\tcalF_k := \{ Y \in \sym{r_k}_+ \mid \widetilde{\mathcal{A}}_k(Y) = 0 \}$
                \State $B_k :=$ random matrix from any absolutely continuous distribution on $\sym{r_k}$
                \State Generate $\tM_k \in \sym{r_k}$ from the following linear SDP: $\tM_k \in \argmin_{Y \in \widetilde{\calF}_k, \trace{Y} = 1} \inprod{B_k}{Y}$
                \label{alg:ray:line:facial-reduction-3}

                \vspace{-2mm} \Statex \hdashrule{\linewidth}{0.4pt}{2pt 2pt}

                \State If SDP solver fails, 
                \Return $r := k-1$, $\{t_j, M_j\}_{j=1}^r$, $X_{r+1}$ \Comment{restart mechanism}

                \vspace{-2mm} \Statex \hdashrule{\linewidth}{0.4pt}{2pt 2pt}

                \State Lift $M_k := Q_k \tM_k Q_k^\top$

                \State Find $t_k := \lambda_{\max}^{-1}(\tX_k^{-\frac{1}{2}} \tM_k \tX_k^{-\frac{1}{2}})$
                \Comment{\cf~\eqref{eq:ray:closed-form-tk}}
                \label{alg:ray:line:closed-form-tk}

                \State $X_{k+1} := \Call{AlternatingProj}{X_k - t_k M_k, \mathcal{A}, \Talt, \epsalt} 
                $, and set $k := k+1$
                \Comment{\cf Algorithm~\ref{alg:ray:alternating-projection}}
                \label{alg:ray:line:inner-end}

                \vspace{-2mm} \Statex \hdashrule{\linewidth}{0.4pt}{2pt 2pt}
            \EndWhile
            \State \Return $r := k - 1$, $\{t_j, M_j\}_{j=1}^r$, $X_{r+1}$
        \EndProcedure
    \end{algorithmic}
\end{algorithm}

\begin{algorithm}
    \caption{Extraction of a Numerically Independent Column Subset~\label{alg:ray:remove-dependent-cols}}
    \begin{algorithmic}[1]
        \Require matrix $A \in \mathbb{R}^{m \times n}$, tolerance $\epscol > 0$
        \Ensure index set $\calI \subseteq [n]$ that makes the columns of $A(:, \calI)$ numerically linearly independent
        \Procedure{IndependentCols}{$A,\epscol$}
            \State Compute a column-pivoted QR factorization $A(:,p) = QR$
            \State Set $d := \abs{\diag{R}}$ and compute $r := \max \{ i \mid d_i > \epscol d_1 \}$
            \State \Return $\calI := p(1:r)$
        \EndProcedure
    \end{algorithmic}
\end{algorithm}

\begin{algorithm}
    \caption{Alternating Projection onto the Intersection of a Linear Subspace and the PSD Cone~\label{alg:ray:alternating-projection}}
    \begin{algorithmic}[1]
        \Require initial matrix $X$, linear operator $\calA: \sym{N} \to \Real{m}$ representing a subspace $\calL = \{X \mid \calA(X) = 0\} \subset \sym{N}$, iteration limit $\Talt$, tolerance $\epsalt$
        \Ensure projected matrix $X$
        \Procedure{AlternatingProj}{$X$, $\calA$, $\Talt$, $\epsalt$}
            \For{$k=1,\dots,\Talt$}
                \State Project onto the linear subspace: $X := X - \calA\tran (\calA \calA\tran)^{-1} \calA(X)$
                \label{alg:ray:line:alt-subspace}
                \State Project onto the PSD cone: $X := \Pi_{\psd{N}}(X)$
                \label{alg:ray:line:alt-psdcone}
                \State If $\normtwo{\calA(X)} < \epsalt$, \Return $X$
            \EndFor
            \State \Return $X$
        \EndProcedure
    \end{algorithmic}
\end{algorithm}

\paragraph{(\romannumeral1) Facial reduction.}
A major source of instability in Algorithm~\ref{alg:intro:extremeray-decomp} is that, when $\rank{X_k} < N$, solving~\eqref{eq:intro:Mk} directly in the ambient space $\sym{N}$ ignores that the feasible set is confined to a proper face of $\psd{N}$. In this case, strict feasibility in the ambient cone may fail, along with the regularity properties typically exploited by SDP solvers, including strong duality~\cite{ramana97siopt-strong-duality-sdp}. The effect can be particularly severe when $\rank{X_k} \ll N$. To address this issue, we apply facial reduction to restore strict feasibility on the relevant face~\cite{borwein81jams-facial-reduction}; see Lines~\ref{alg:ray:line:facial-reduction-1}--\ref{alg:ray:line:facial-reduction-3} of Algorithm~\ref{alg:ray:extremeray-decomp-robust}. Specifically, we compute the numerical rank $r_k$ of $X_k$ and an orthonormal basis $Q_k$ for its column space, both with tolerance $\epsrank$, and restrict the subproblem to the face $Q_k \psd{r_k} Q_k\tran$. We then remove numerically redundant constraints, up to tolerance $\epscol$, using a pivoted QR factorization (\cf Algorithm~\ref{alg:ray:remove-dependent-cols}). The resulting regularized linear SDP has size $r_k \times r_k$ and only $\abs{\calI_k}$ independent constraints. Since its feasible set satisfies Slater's condition and is compact, the primal and dual optimal values coincide, and both optima are attained.

\paragraph{(\romannumeral2) Closed-form expression for $t_k$.}
The scalar $t_k$ in~\eqref{eq:intro:tk} admits a closed-form expression. Let
$X_k = Q_k \tX_k Q_k\tran$
be the eigenvalue decomposition of $X_k$ at the $k$-th iteration of Algorithm~\ref{alg:intro:extremeray-decomp}, where $Q_k \in \Real{N \times r_k}$ has orthonormal columns and $\tX_k \in \pd{r_k}$ is diagonal. Since $M_k \in \calF_k = \minface(X_k, \calC)$, we must also have
$M_k = Q_k \tM_k Q_k\tran$
for some $\tM_k \in \psd{r_k}$. Therefore,
\begin{align}
    \label{eq:ray:closed-form-tk}
    t_k
    =  & \sup \{ t \ge 0 \mid X_k - t M_k \in \calF_k \} 
    = \sup \{ t \ge 0 \mid \tX_k - t \tM_k \in \psd{r_k} \} \nonumber \\
    = & \sup \{ t \ge 0 \mid I_{r_k} - t \tX_k^{-\frac{1}{2}} \tM_k \tX_k^{-\frac{1}{2}} \in \psd{r_k} \} 
    = \lambda_{\max}^{-1}\bigl( \tX_k^{-\frac{1}{2}} \tM_k \tX_k^{-\frac{1}{2}} \bigr).
\end{align}
The second equality holds because both $X_k$ and $M_k$ satisfy the linear constraints defining $\calC$, so $X_k - tM_k \in \{ X \mid \calA(X) = 0 \}$ for every $t$; hence only the PSD constraint remains. The final equality is well defined because $\trace{M_k} = \tr(\tM_k) = 1$ and $\tM_k \in \psd{r_k}$, which implies $\tX_k^{-\frac{1}{2}} \tM_k \tX_k^{-\frac{1}{2}}$ is nonzero and positive semidefinite. We have therefore replaced~\eqref{eq:intro:tk} by~\eqref{eq:ray:closed-form-tk} in Algorithm~\ref{alg:ray:extremeray-decomp-robust}, Line~\ref{alg:ray:line:closed-form-tk}.  

\paragraph{(\romannumeral3) Alternating projection.}
In exact arithmetic, $X_k \in \calC$ at every iteration of Algorithm~\ref{alg:intro:extremeray-decomp}. In practice, however, the SDP subproblems are not solved to machine precision, so the iterates gradually drift away from $\calC$, which can destabilize subsequent subproblems. To mitigate this effect, we apply a cheap alternating projection step (\cf Algorithm~\ref{alg:ray:alternating-projection}) after each iteration to improve the feasibility of $X_k$ with respect to $\calC$. 
The alternating projection routine is terminated after at most $\Talt$ steps, or earlier if the stopping criterion with tolerance $\epsalt$ is satisfied.
Since $\calA$ is typically sparse, the projection onto $\{X \mid \calA(X) = 0\}$ in Line~\ref{alg:ray:line:alt-subspace} can be implemented efficiently in two stages: we first precompute a sparse Cholesky factorization of $\calA \calA\tran$ at the beginning of Algorithm~\ref{alg:ray:extremeray-decomp-robust}$,$ and then, at each projection step, solve two sparse triangular systems. In practice, the cost of alternating projection is usually negligible compared with the SDP solve. A more detailed complexity discussion is deferred to \S\ref{sec:ray:complexity}. 

\paragraph{(\romannumeral4) A restarting mechanism.}
To further improve robustness, we restart the extreme-ray decomposition whenever an SDP subproblem cannot be solved reliably. In practice, such failures are often caused by overly aggressive eigenvalue truncation in Line~\ref{alg:ray:line:facial-reduction-1} of Algorithm~\ref{alg:ray:extremeray-decomp-robust}, which may render the reduced SDP in Line~\ref{alg:ray:line:facial-reduction-3} infeasible. Accordingly, if the inner routine $\textsc{RayDecompFR}$ (\cf Algorithm~\ref{alg:ray:extremeray-decomp-robust}, Line~\ref{alg:ray:line:start-fr}) returns no extreme rays, \ie when the SDP solver fails at the first iteration, we reduce $\epsrank$ by a factor of $10$ (\cf Algorithm~\ref{alg:ray:extremeray-decomp-robust}, Line~\ref{alg:ray:line:adpt-epsrank}). More generally, whenever an SDP failure is detected, we first project the current iterate back onto $\calC$ (\cf Algorithm~\ref{alg:ray:extremeray-decomp-robust}, Line~\ref{alg:ray:line:reround}) using alternating projection (\cf Algorithm~\ref{alg:ray:alternating-projection}), and then restart the procedure.

\subsection{Estimation of Complexity}
\label{sec:ray:complexity}

Now we estimate the time and memory complexity of Algorithm~\ref{alg:ray:extremeray-decomp-robust}. We begin with a general spectrahedral cone $\calC$ represented by $\calA: \sym{N} \to \Real{m}$, and then specialize to $\MomConePs{n+1,2d}$.

Since each generated $M_k$ decreases the iterate's rank by at least $1$, it is reasonable, by amortized analysis, to regard the total number of inner-loop iterations in $\textsc{RayDecompFR}$ (\cf Algorithm~\ref{alg:ray:extremeray-decomp-robust}, Line~\ref{alg:ray:line:inner-start} to~\ref{alg:ray:line:inner-end}) as $O(r_\max)$, where $r_\max \le N$ is the numerical rank of the input matrix $X$. Within each inner-loop iteration, the four main sources of complexity are: (\romannumeral1) eigenvalue decomposition to compute numerical rank (\cf Algorithm~\ref{alg:ray:extremeray-decomp-robust}, Line~\ref{alg:ray:line:inner-start}); (\romannumeral2) SDP solve to generate an extreme ray (\cf Algorithm~\ref{alg:ray:extremeray-decomp-robust}, Line~\ref{alg:ray:line:facial-reduction-3}); (\romannumeral3) pivoted QR decomposition to remove redundant constraints in facial reduction (\cf Algorithm~\ref{alg:ray:remove-dependent-cols}); and (\romannumeral4) alternating projection to improve feasibility (\cf Algorithm~\ref{alg:ray:alternating-projection}).

At iteration $k$, the iterate $X_k$ has numerical rank $r_k \le r_\max$. Computing its eigenvalue decomposition costs $O(N^3)$ time and $O(N^2)$ memory. Solving the reduced SDP by an interior-point method costs $O(\Tipm r_k^6)$ time and $O(r_k^4)$ memory, where $\Tipm$ denotes the number of IPM iterations and is typically moderate~\cite{vandenberghe96siam-review-sdp}. The pivoted QR step takes as input a matrix of size $r_k^2 \times \abs{\calI_{k-1}}$, and therefore costs $O(\min\{r_k^4 \abs{\calI_{k-1}},\ r_k^2 \abs{\calI_{k-1}}^2\})$ time and $O(r_k^2 \abs{\calI_{k-1}})$ memory. By definition, we can upper bound $\abs{\calI_{k-1}}$ by $m$. The complexity of alternating projection depends on the sparsity pattern of $\calA$, so we now specialize to the pseudo-moment cone $\calC = \MomConePs{n+1,2d}$, which is the main focus of this paper.

In this case, $N = \binom{n+d}{d} \sim O(n^d)$ and $m = \frac{1}{2}(N+1)N - \binom{n+2d}{2d} \sim O(N^2)$. Each constraint in $\calA(X)=0$ has the form $X_{\alpha,\beta} - X_{\alpha',\beta'} = 0$, where $\alpha+\beta=\alpha'+\beta'$ and $\alpha,\beta,\alpha',\beta' \in \MultiIndex{n+1}{d}$. For each $\gamma \in \MultiIndex{n+1}{2d}$, define $\calE_\gamma := \{(\alpha,\beta) \in (\MultiIndex{n+1}{d})^{\times 2} \mid \alpha \le \beta,\ \alpha+\beta=\gamma\}$ and let $s_\gamma := \abs{\calE_\gamma}$. Then $\{\calE_\gamma\}_{\gamma \in \MultiIndex{n+1}{2d}}$ partitions the upper-triangular entries of $X$, so $\sum_{\gamma \in \MultiIndex{n+1}{2d}} s_\gamma = \frac{1}{2}N(N+1)$ and $m = \sum_{\gamma \in \MultiIndex{n+1}{2d}} (s_\gamma-1)$. For each $\gamma$, fix an ordering $\calE_\gamma = \{(\alpha_{\gamma,j},\beta_{\gamma,j})\}_{j=1}^{s_\gamma}$ and choose the independent constraints as the chain $X_{\alpha_{\gamma,j},\beta_{\gamma,j}} - X_{\alpha_{\gamma,j-1},\beta_{\gamma,j-1}} = 0$ for $j=2,\dots,s_\gamma$. With this choice, each row of $\calA$ has exactly two nonzero entries, rows from different $\gamma$ have disjoint supports, and $\calA\calA\tran$ is block diagonal:
\begin{align}
    \label{eq:ray:complexity-AAT}
    \calA \calA\tran
    =
    \operatorname{blkdiag}\bigl(T_{s_\gamma-1}\bigr)_{\gamma \in \MultiIndex{n+1}{2d}},
    \qquad
    T_q =
    \begin{bmatrix}
        2 & -1 \\
        -1 & 2 & -1 \\
        & \ddots & \ddots & \ddots \\
        & & -1 & 2
    \end{bmatrix}.
\end{align}
Since each $T_q$ is positive definite and has a bidiagonal Cholesky factor, the sparse Cholesky factorization of $\calA\calA\tran$ costs $O(\sum_{\gamma \in \MultiIndex{n+1}{2d}} (s_\gamma-1)) = O(m) = O(N^2)$ time and memory.

Moreover, the linear-subspace projection in Algorithm~\ref{alg:ray:alternating-projection}, Line~\ref{alg:ray:line:alt-subspace}, is given by $X \mapsto X - \calA^\ast (\calA\calA^\ast)^{-1}\calA(X)$. Under the above chain basis, applying $\calA$, applying $\calA^\ast$, and solving $(\calA\calA^\ast)z=\calA(X)$ with the precomputed sparse Cholesky factor each cost $O(m)=O(N^2)$. Hence one linear-subspace projection costs $O(N^2)$ time and memory. By contrast, the PSD projection still costs $O(N^3)$ time and $O(N^2)$ memory, so one call of Algorithm~\ref{alg:ray:alternating-projection} costs $O(\Talt N^3)$ time and $O(N^2)$ memory. The one-time sparse Cholesky preprocessing cost $O(N^2)$ is absorbed by this bound.

In summary, for $\calC=\MomConePs{n+1,2d}$, the time complexity of one inner-loop iteration in $\textsc{RayDecompFR}$ is bounded by
$
    O (\Tipm r_\max^6 + r_\max^4 N^2 + \Talt N^3 ),
$
and the corresponding memory complexity is bounded by
$
    O ( r_\max^4 + r_\max^2 N^2 ).
$
Since the total number of inner-loop iterations is $O(r_\max)$, the overall time complexity of Algorithm~\ref{alg:ray:extremeray-decomp-robust} is
$
    O ( r_\max(\Tipm r_\max^6 + r_\max^4 N^2 + \Talt N^3) ).
$
In the worst case $r_\max=N$, and if $\Tipm$ and $\Talt$ are treated as constants, this becomes $O(N^7) \sim O(n^{7d})$ time and $O(N^4) \sim O(n^{4d})$ memory.


\section{Numerical Experiments}
\label{sec:exp}

In this section, we evaluate the practical efficiency and numerical robustness of Algorithm~\ref{alg:ray:extremeray-decomp-robust} under finite-precision arithmetic. In \S\ref{sec:exp:atomic}, we assess its performance as a numerical atomic decomposition method for moment matrices. In \S\ref{sec:exp:highrank}, we examine regimes in which the algorithm no longer recovers the original weighted atoms, yet still produces a collection of generally non-unique extreme rays of $\MomConePs{n,2d}$. In particular, the experiments reveal the emergence of high-rank extreme rays, suggesting that Algorithm~\ref{alg:ray:extremeray-decomp-robust} may also serve as a practical sampler for at least a subset of the high-rank extreme rays of $\MomConePs{n,2d}$. All experiments were carried out on a high-performance workstation equipped with a 2.7\,GHz AMD 64-Core sWRX8 processor and 1\,TB of RAM. The implementation and experimental results are available at \url{https://github.com/ComputationalRobotics/momentcone-simregularity}.

\subsection{Algorithm~\ref{alg:ray:extremeray-decomp-robust} as Atomic Decomposition of Moment Matrix}
\label{sec:exp:atomic}

The hyper-parameters in Algorithm~\ref{alg:ray:extremeray-decomp-robust} are fixed throughout the experiments as $\epsrank = 10^{-7}$, $\epsbreak = 10^{-4}$, $\epscol = 10^{-7}$, $\epsalt = 10^{-14}$, and $\Talt = 500$. We use the commercial solver \MOSEK~\cite{aps19ugrm-mosek-sdpsolver} to solve the SDP subproblems. For each pair $(n,d)$, we set $\calC = \MomConePs{n+1,2d}$ and $N = \binom{n+d}{d}$, and sweep the number of atoms $s$ from $2$ to $N$. For each $s$, we generate $100$ random moment matrices $X_i(n,d,s) \in \calC$, $i = 1,\dots,100$, of the form $\sum_{j=1}^s c_{i,j} m_d(z_{i,j}) m_d(z_{i,j})\tran$, where the coefficients $c_{i,j}$ are sampled i.i.d. from the uniform distribution on $[0,1]$, and each entry of $z_{i,j} \in \Real{n}$ is sampled i.i.d. from the uniform distribution on $[-1,1]$. For each input $X_i(n,d,s)$, Algorithm~\ref{alg:ray:extremeray-decomp-robust} returns a decomposition $\{t_k,M_k\}_{k=1}^r$. We regard the recovery as unsuccessful if either $r \neq s$ or at least one output matrix $M_k$ has numerical rank greater than $1$. In this case, we set the relative recovery errors for the weights and atoms to be $e_{w,i}(n,d,s) = 1$ and $e_{z,i}(n,d,s) = 1$, respectively. Otherwise, we recover the weights $\{c_k'\}_{k=1}^s$ and atoms $\{z_k'\}_{k=1}^s$ using Corollary~\ref{cor:ray:recover}. Without loss of generality, we assume that both the ground-truth weights $\{c_k\}_{k=1}^s$ and the recovered weights $\{c_k'\}_{k=1}^s$ are arranged in increasing order, and compare the two decompositions accordingly. The relative recovery errors are then defined by
\begin{align*}
    e_{w,i}(n,d,s) = \frac{1}{s} \sum_{k=1}^s \frac{\abs{c_k^\prime - c_k}}{1 + c_k}, \quad 
    e_{z,i}(n,d,s) = \frac{1}{s} \sum_{k=1}^s \frac{\normtwo{z_k^\prime - z_k}}{1 + \normtwo{z_k}}.
\end{align*}
Finally, we average over the $100$ trials and report $e_w(n,d,s) := \frac{1}{100} \sum_{i=1}^{100} e_{w,i}(n,d,s)$ and $e_z(n,d,s) := \frac{1}{100} \sum_{i=1}^{100} e_{z,i}(n,d,s)$.

\paragraph{Results.}
We consider two choices of the relaxation order \(d\): (\romannumeral1) for \(d=2\), we sweep \(n\) from \(3\) to \(10\); and (\romannumeral2) for \(d=3\), we sweep \(n\) from \(2\) to \(5\). The cases \((d,n)=(2,3)\) and \((3,2)\) correspond to \(\SosCone{4,4}\) and \(\SosCone{3,6}\), respectively, the two smallest cases in which the SOS cone differs from the cone of nonnegative forms~\cite{blekherman12book-semidefinite}. Figure~\ref{fig:exp:error} reports \(e_w\) and \(e_z\) as functions of \(s \in [2,N]\). For each \((d,n)\), we observe a clear phase transition: below an empirical threshold, both recovery errors remain essentially zero, whereas above this threshold they rapidly increase to \(1\). Table~\ref{tab:exp:sbound} summarizes these empirical phase-transition thresholds and compares them with the theoretical upper bound on \(s\) from~\eqref{eq:sim:minface-s}. The empirical thresholds are consistently much larger than the theoretical ones, suggesting that the bound in~\eqref{eq:sim:minface-s} is conservative in practice. Finally, Table~\ref{tab:exp:time} reports the running time for a single sample \(X_i\) with \(s=\binom{n+d}{d}\), illustrating the efficiency of the implementation.

To further illustrate the scalability and robustness of Algorithm~\ref{alg:ray:extremeray-decomp-robust}, we consider a larger-scale instance with $d=2$, $n=20$, and $s=100$. In this setting, the algorithm successfully recovers all $100$ weighted atoms in $7$ h $12$ min.


\begingroup

\newcommand{\FieldFig}[3]{
    \begin{minipage}{0.23\textwidth}
        \centering
        \includegraphics[width=\columnwidth]{figs/recover/k=#1_n=#2/beautiful_error.png}
        (#3) $(d,n) = (#1, #2)$
    \end{minipage}
}

\begin{figure}[htbp]
    \centering

    \begin{minipage}{\textwidth}
        \centering
        \begin{tabular}{cccc}
            \FieldFig{2}{3}{a}
            \FieldFig{2}{4}{b}
            \FieldFig{2}{5}{c}
            \FieldFig{2}{6}{d}
        \end{tabular}
    \end{minipage}

    \begin{minipage}{\textwidth}
        \centering
        \begin{tabular}{cccc}
            \FieldFig{2}{7}{e}
            \FieldFig{2}{8}{f}
            \FieldFig{2}{9}{g}
            \FieldFig{2}{10}{h}
        \end{tabular}
    \end{minipage}

    \begin{minipage}{\textwidth}
        \centering
        \begin{tabular}{cccc}
            \FieldFig{3}{2}{i}
            \FieldFig{3}{3}{j}
            \FieldFig{3}{4}{k}
            \FieldFig{3}{5}{l}
        \end{tabular}
    \end{minipage}

    \caption{\label{fig:exp:error} Response curves of $e_w$ and $e_z$ as functions of $s$ for different $d$ and $n$.}
\end{figure}

\endgroup

\begin{table}[htbp]
    \centering
    \begin{tabular}{ccccccccccccc}
        \toprule
        $d$      & 2    & 2    & 2    & 2    & 2    & 2     & 2     & 2 & 3 & 3 & 3 & 3   \\
        \midrule
        $n$      & 3    & 4    & 5    & 6    & 7    & 8     & 9     & 10 & 2 & 3 & 4 & 5    \\
        \midrule
        Theoretical Upper Bound from~\eqref{eq:sim:minface-s}   & 2 & 3 & 5 & 7 & 9 & 12 & 15 & 18 & 2 & 6 & 12 & 20 \\
        \midrule
        Empirical Upper Bound from Experiments   & 6 & 10 & 15 & 20 & 26 & 35 & 43 & 53 & 7 & 16 & 29 & 47 \\
        \bottomrule
    \end{tabular}
    \caption{Upper bounds on $s$ for the uniqueness of the extreme-ray decomposition for different $d$ and $n$. We report both the theoretical upper bounds given by~\eqref{eq:sim:minface-s} and the empirical upper bounds inferred from the phase transitions of $e_w$ and $e_z$ shown in Figure~\ref{fig:exp:error}. \label{tab:exp:sbound}}
\end{table}

\begin{table}[htbp]
    \centering
    \begin{tabular}{ccccccccccccc}
        \toprule
        $d$ & 2 & 2 & 2 & 2 & 2 & 2 & 2 & 2 & 3 & 3 & 3 & 3 \\
        \midrule
        $n$ & 3 & 4 & 5 & 6 & 7 & 8 & 9 & 10 & 2 & 3 & 4 & 5 \\
        \midrule
        $t$ (s) & 0.46 & 0.53 & 1.13 & 3.57 & 6.32 & 16.49 & 46.45 & 82.72 & 0.42 & 1.32 & 6.05 & 34.03 \\
        \bottomrule
    \end{tabular}
    \caption{Elapsed time for extreme ray decomposition with different $d$ and $n$. $s$ is set to $\binom{n+d}{d}$. \label{tab:exp:time}}
\end{table}

\subsection{\titlemath{Algorithm~\ref{alg:ray:extremeray-decomp-robust} as Sampler of High-Rank Extreme Rays of $\MomConePs{n,2d}$}}
\label{sec:exp:highrank}

We now report an intriguing empirical phenomenon: once $s$ crosses the phase-transition threshold, \ie once it enters the regime in which both $e_w$ and $e_z$ are nearly $1$ in Figure~\ref{fig:exp:error}, high-rank extreme rays of $\MomConePs{n+1,2d}$ begin to appear. For example, when $(d,n) = (2,3)$, Figure~\ref{fig:exp:n=3} shows snapshots of the numerical ranks of the extreme rays returned from $100$ random initial matrices $X_i$ for different values of $s \in [3,N]$. When $s \le 6$, all returned extreme rays have rank $1$.  Once $s > 7$, however, rank-$6$ extreme rays appear for almost all random seeds. This observation is consistent with the theoretical results in~\cite{blekherman12jams-nonnegative-polynomial-sos,blekherman12book-semidefinite}, which show that the extreme rays of $\SosCone{4,4}^*$ have rank either $1$ or $6$. The numerical ranks \(3\) and \(5\) observed when \(s=8\) are artifacts of finite-precision rank truncation; they should be interpreted as rank-\(1\) and rank-\(6\) extreme rays, respectively.

A similar pattern is observed for other $(d,n)$ pairs. We refer the reader to our \href{https://github.com/ComputationalRobotics/momentcone-simregularity}{codebase} for animations of how the extreme-ray ranks evolve as $s$ increases. Figure~\ref{fig:exp:n=others} presents snapshots of the numerical ranks of the returned extreme rays when $s=N$. From these plots, we read off the following rank ranges: (a) $(d,n) = (2,4)$: $8,9$; (b) $(d,n) = (2,5)$: $12$--$14$; (c) $(d,n) = (2,6)$: $16$--$19$; (d) $(d,n) = (2,7)$: $21$--$25$; (e) $(d,n) = (2,8)$: $27$--$32$ (with $31$ absent); (f) $(d,n) = (2,9)$: $33$--$40$ (with $38$ and $39$ absent); (g) $(d,n) = (2,10)$: $39$--$48$; (h) $(d,n) = (3,2)$: $7$; (i) $(d,n) = (3,3)$: $14,15$; (j) $(d,n) = (3,4)$: $25$--$28$; and (k) $(d,n) = (3,5)$: $41$--$47$ (with $45$ and $46$ absent). At the same time, our current algorithm does not appear to sample the full spectrum of extreme rays of $\MomConePs{n+1,2d}$. Indeed,~\cite{blekherman15pams-positive-gorenstein-ideals} shows that rank-$6$ extreme rays exist for all $d=2$ and $n \ge 4$, yet no such rays appear in Figure~\ref{fig:exp:n=others}.
Another interesting empirical observation is that, within a single decomposition trajectory, higher-rank extreme rays \emph{always} appear only after lower-rank ones. For example, when $(d,n) = (2,4)$, Algorithm~\ref{alg:ray:extremeray-decomp-robust} typically encounters a sequence of rank-$1$ extreme rays first, followed by rank-$8$ extreme rays, and finally rank-$9$ extreme rays. This consistent ordering suggests that the algorithm may traverse extreme rays in a structured manner, with lower-rank rays appearing earlier in the decomposition and higher-rank rays emerging only at later stages.
Explaining these empirical phenomena would be an interesting direction for future work. 


\begingroup

\newcommand{\FieldFig}[1]{
    \begin{minipage}{0.23\textwidth}
        \centering
        \includegraphics[width=\columnwidth]{figs/recover/k=2_n=3/rank_k=2_n=3_pe=#1.png}
    \end{minipage}
}

\begin{figure}[htbp]
    \centering

    \begin{minipage}{\textwidth}
        \centering
        \begin{tabular}{cccc}
            \FieldFig{3}
            \FieldFig{4}
            \FieldFig{5}
            \FieldFig{6}
        \end{tabular}
    \end{minipage}

    \begin{minipage}{\textwidth}
        \centering
        \begin{tabular}{cccc}
            \FieldFig{7}
            \FieldFig{8}
            \FieldFig{9}
            \FieldFig{10}
        \end{tabular}
    \end{minipage}

    \caption{\label{fig:exp:n=3} Numerical ranks of extreme rays for $d = 2, n = 3$ when $s$ is increased from $3$ to $10$. Each point on $x$-axis represents a random seed.}
\end{figure}

\endgroup

\begingroup

\newcommand{\FieldFig}[4]{
    \begin{minipage}{0.23\textwidth}
        \centering
        \includegraphics[width=\columnwidth]{figs/recover/k=#1_n=#2/rank_k=#1_n=#2_pe=#3.png}
        (#4) $(d,n) = (#1,#2)$
    \end{minipage}
}

\begin{figure}[htbp]
    \centering

    \begin{minipage}{\textwidth}
        \centering
        \begin{tabular}{cccc}
            \FieldFig{2}{4}{15}{a}
            \FieldFig{2}{5}{21}{b}
            \FieldFig{2}{6}{28}{c}
            \FieldFig{2}{7}{36}{d}
        \end{tabular}
    \end{minipage}

    \begin{minipage}{\textwidth}
        \centering
        \begin{tabular}{cccc}
            \FieldFig{2}{8}{45}{e}
            \FieldFig{2}{9}{55}{f}
            \FieldFig{2}{10}{66}{g}
            \FieldFig{3}{2}{10}{h}
        \end{tabular}
    \end{minipage}

    \begin{minipage}{\textwidth}
        \centering
        \begin{tabular}{ccc}
            \FieldFig{3}{3}{20}{i}
            \FieldFig{3}{4}{35}{j}
            \FieldFig{3}{5}{56}{k}
        \end{tabular}
    \end{minipage}

    \caption{\label{fig:exp:n=others} Numerical ranks of extreme rays for different $n$ and $d$, with $s = \binom{n+d}{d}$. Each point on $x$-axis represents a random seed.}
\end{figure}

\endgroup


\section{Conclusion}
\label{sec:conclusion}

We established a new facial-geometric description of the pseudo-moment cone via simplicial regularizability around generically generated moment matrices. Based on this structure, we developed a Carath\'eodory-type extreme-ray decomposition method for spectrahedral cones and showed that, for pseudo-moment cones, it gives an efficient atomic decomposition algorithm recovering generically generated moment matrices with up to \( \Theta(n^d) \) atoms. We further introduced a stabilized implementation and demonstrated strong practical performance in numerical experiments, including the appearance of high-rank extreme rays beyond the proven regime. It would be interesting to extend this framework to broader term-sparsity settings~\cite{wang21siopt-tssos} and non-commutative settings~\cite{klep18siopt-minimizer-extraction-robust}, and to further improve the efficiency of Algorithm~\ref{alg:ray:extremeray-decomp-robust} while clarifying its role as a sampler of high-rank extreme rays of \( \SosCone{n,2d}^* \).

\subsection*{Acknowledgments}

We are grateful to Grigoriy Blekherman and Pablo A.~Parrilo for valuable discussions on high-rank extreme rays of the pseudo-moment cone. We also thank Joseph Kileel and Bobby Shi for valuable discussions on tensor decomposition. Part of this work was conducted during visits by SK and HY to the Polynomial Optimization team at LAAS-CNRS, where Jean-Bernard Lasserre, Didier Henrion, and Victor Magron shared helpful insights.





\bibliographystyle{plain}
\bibliography{../../../references/refs,../../../references/myRefs}

\end{document}